\documentclass{amsart}
\usepackage[dvipsnames]{xcolor}
\usepackage{hyperref}
\hypersetup{
     colorlinks=true,
     linkcolor=blue!60!black,
     filecolor=blue!75!black,
     citecolor = green!50!black,      
     urlcolor=magenta,
     }
\usepackage{ stmaryrd }
\usepackage{amssymb}
\usepackage{ bbold }
\usepackage[all]{xy}
\xyoption{all}
\usepackage{url}
\usepackage{mathrsfs}
\usepackage{tikz-cd}
\usepackage{eucal}
\usepackage{comment}
\usepackage{mathtools}
\usepackage[nameinlink,capitalise,noabbrev]{cleveref}
\usepackage{epigraph} 
\usetikzlibrary{matrix, calc, arrows}

\newcommand{\res}{\operatorname{res}\nolimits}
\newcommand{\Res}{\operatorname{Res}\nolimits}
\newcommand{\Hom}{\operatorname{Hom}\nolimits}
\newcommand{\Spc}{\operatorname{Spc}\nolimits}
\newcommand{\Spec}{\operatorname{Spec}\nolimits}

\def \T{{\mathcal T}}
\def \S{{\mathcal S}}

\def \Fin{\mathcal{F}\textrm{in}}

\def \pp{\mathfrak{p}}
\def \qq{\mathfrak{q}}

\DeclareMathOperator{\supp}{supp}

\DeclareMathOperator{\cosupp}{cosupp}

\DeclareMathOperator{\Mod}{Mod}
\DeclareMathOperator{\mmod}{mod}

\DeclareMathOperator{\CAlg}{CAlg}
\DeclareMathOperator{\Sp}{Sp}

\usepackage{todonotes}

\newcommand{\mmmod}{\mmod\kern-0.1em\text{-}}%
\newcommand{\MMod}{\Mod\kern-0.1em\text{-}}%

\numberwithin{equation}{section}
\newtheorem{theorem}[equation]{Theorem}
\newtheorem{proposition}[equation]{Proposition}
\newtheorem{corollary}[equation]{Corollary}

\theoremstyle{remark}
\newtheorem{definition}[equation]{Definition}

\newtheorem{remark}[equation]{Remark}
\newtheorem{recollection}[equation]{Recollection}
\newtheorem{example}[equation]{Example}

\newtheorem{notation}[equation]{Notation}

\keywords{Chouinard's theorem, weak descendability, conservativity, cohomological stratification}
\subjclass[2020]{55R35;  18F99, 20C99, 55P42, 55P91.}
\thanks{}
\date{\today}

\title{A derived Chouinard{'}s theorem for infinite groups}

\begin{document}

\author[]{Natàlia Castellana}
\author[]{Juan Omar G\'omez}

\address{Natàlia Castellana, Departament de Matemàtiques, Universitat Autònoma de Barcelona, 08193 Bellaterra, Spain; Centre de Recerca Matemàtica, Barcelona, Spain}
\email{Natalia.Castellana@uab.cat}
\urladdr{https://mat.uab.cat/~natalia}

\address{Juan Omar G\'omez, Fakultat f\"ur Mathematik, Universit\"at Bielefeld, D-33501 Bielefeld, Germany}
\email{jgomez@math.uni-bielefeld.de}
\urladdr{https://sites.google.com/cimat.mx/juanomargomez/home}

\begin{abstract}
    We establish a Chouinard-type theorem for modules over algebras of cochains on the classifying spaces of a broad class of infinite groups, allowing commutative ring spectra as coefficients. We then use this result to study finiteness properties of the integral cohomology rings of certain infinite groups, as well as stratification of the corresponding module categories. In particular, we complete the (co)stratification of module categories associated to homotopical groups over fields of positive characteristic.
\end{abstract}

\maketitle

\tableofcontents

\setcounter{tocdepth}{1}
\setlength{\parskip}{0.8ex}


\section{Introduction}

Let $G$ be a finite group and $k$ be a field of positive characteristic $p$ dividing the order of $G$. A fundamental tool in the modular representation theory of $G$ is Chouinard's theorem \cite{Chouinard}, which states that the projectivity of a $kG$–module can be detected on elementary abelian $p$-subgroups. One may reformulate this result as the joint conservativity of the family of restriction functors on stable module categories 
\[
\left(\res^G_E\colon \mathrm{StMod}_{kG} \to \mathrm{StMod}_{kE}\right)_E
\]
where $E$ runs over all elementary abelian $p$-subgroups of $G$.

 As observed by D. Benson and H. Krause in \cite[Section 12]{BK08}, this reformulation of Chouinard's theorem already implies the joint conservativity of the family of restriction functors between homotopy categories of projective $kG$–modules
\[
\left(\res^G_E\colon \mathbf{K}(\mathrm{Proj}(kG)) \to \mathbf{K}(\mathrm{Proj}(kE))\right)_E
\]
 again with $E$ ranging over all elementary abelian $p$-subgroups of $G$. This observation played a central role in the celebrated work of Benson–Iyengar–Krause \cite{BIK11b} on the classification of localizing tensor ideals of $ \mathbf{K}(\mathrm{Proj}(kG))$ and $\mathrm{StMod}_{kG}$, as it allowed them to reduce the problem to the case of elementary abelian groups. 
 
 In a \textit{geometric} context, Chouinard's theorem admits a similar interpretation in the category $\Mod_{C^\ast(BG;\mathbb F_p)}$ of modules over the ring spectrum $C^\ast(BG;\mathbb F_p)$ of $\mathbb F_p$-valued cochains on the classifying space $BG$ of a finite group $G$. In this situation,  Benson-Krause \cite{BK08} proved that the family of induction functors
 \[
 \left(\mathrm{Ind}^E_G\colon \Mod_{C^\ast(BG;\mathbb F_p)} \to \Mod_{C^\ast(BE;\mathbb F_p)}\right)_E
 \]
is jointly conservative, where $E$ ranges over all elementary abelian $p$-subgroups. In fact, in this geometric setting the same statement holds for an arbitrary compact Lie group $G$; see \cite{BG}.

 We generalize the previous result to a broad class of infinite (discrete) groups; see \cref{coro:chouinard in the field case} and \cref{thm-chouinard-mod-p}. 

\begin{theorem}\label{intro-thm-chouinard}
   Let $G$ be a group admitting a finite model for the classifying space for proper actions of $G$, $\underline{E}G$, or a locally finite Artinian group and let $k$ be a field of positive characteristic $p$. Denote by $\mathcal{E}_p(G)$ the collection of elementary abelian $p$-subgroups of $G$. Then the families of induction and coinduction functors
   \[\left(\mathrm{Ind}_{E}^G,\, \mathrm{CoInd}_{G}^E\colon \Mod_{C^*(BG;k)}\xrightarrow[]{}\Mod_{C^*(BE,k)}\right)_{E\in \mathcal{E}_p(G)}\]  are jointly  conservative.  
\end{theorem}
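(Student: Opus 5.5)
Writing $R=C^*(BG;k)$ and $R_E=C^*(BE;k)$, the plan is to reduce joint conservativity to a descendability-type statement for the commutative $R$-algebra $A=\prod_{E} R_E$. Here $E$ runs over representatives of the finitely many $G$-conjugacy classes in $\mathcal{E}_p(G)$ (the finite-$\underline{E}G$ case), or over a suitable cofinal system (the locally finite Artinian case). Induction is $\mathrm{Ind}_E^G = R_E\otimes_R -$ and coinduction is $\mathrm{Hom}_R(R_E,-)$. Joint conservativity of these families follows if $R$ lies in the thick (or at least localizing) tensor ideal generated by $A$ in $\Mod_R$. Indeed, if $R_E\otimes_R M=0$ for all $E$, then $A\otimes_R M=0$, so $N\otimes_R M=0$ for every $N$ in that ideal, in particular for $N=R$. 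Dually, if $\mathrm{Hom}_R(R_E,M)=0$ for all $E$, then $\mathrm{Hom}_R(N,M)=0$ for all $N$ in the localizing subcategory generated by the $R_E$ and closed under tensoring, and this gives $M=0$. So the whole task is to show $R\in\mathrm{Thick}^{\otimes}\langle R_E\rangle$ (weak descendability of $A$). Dually, if $R\in\mathrm{Loc}^{\otimes}\langle R_E\rangle$, one can still conclude.

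For $G$ with a finite model of $\underline{E}G$, I would use the isotropy spectral sequence / Brown's formula. The Borel construction $\underline{E}G\times_G EG\simeq BG$ expresses $BG$ as a finite homotopy colimit, indexed by the cells of $\underline{E}G$, of classifying spaces $BG_\sigma$ of finite stabilizers. Applying $C^*(-;k)$ turns this into a finite limit, exhibiting $R$ as a finite totalization of the $R$-algebras $C^*(BG_\sigma;k)$. Hence $R\in \mathrm{Thick}\langle C^*(BG_\sigma;k)\rangle$ in $\Mod_R$. Next, for each finite $H=G_\sigma$, the Benson–Krause / Mathew result (descendability of $\prod_E C^*(BE;k)$ over $C^*(BH;k)$, i.e.\ the Quillen stratification at the level of module categories) gives $C^*(BH;k)\in\mathrm{Thick}^{\otimes}\langle C^*(BE;k)\rangle_{E\le H}$ in $\Mod_{C^*(BH;k)}$. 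Restricting scalars along $R\to C^*(BH;k)$ preserves thick subcategories. Moreover, each $C^*(BE;k)$ is an $R$-algebra via $BE\to BH\to BG$, so $R_E$ regarded through $E\le G$ is the same object. Thus $C^*(BH;k)\in\mathrm{Thick}\langle R_E\rangle$, and combining the two steps gives $R\in\mathrm{Thick}\langle R_E\rangle$. Finiteness of $\underline{E}G$ ensures only finitely many $E$ up to conjugacy appear.

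For locally finite Artinian $G$ I would write $G$ as a union of finite subgroups, or better use that such groups are virtually a finite extension of a divisible Chernikov $p$-torus. After $p$-completion, $BG$ is then modeled by a $p$-local compact group / the classifying space of a compact Lie group (the discrete $p$-toral approximation $BG^\wedge_p\simeq B\mathcal{G}^\wedge_p$). Since $C^*(-;k)$ only sees the $p$-completion, $R\simeq C^*(B\mathcal{G};k)$, and I would invoke the compact Lie / $p$-local compact group Chouinard theorem of \cite{BG}. The remaining check is that the elementary abelian subgroups of $\mathcal{G}$ correspond to those of $G$ (the discrete $p$-torus contains all $p$-torsion of the torus), compatibly with the induction maps.

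The main obstacle is this second case. No finite cell structure is available, so the descendability must come through the homotopical approximation. One must verify that the equivalence $C^*(BG;k)\simeq C^*(B\mathcal{G};k)$ is compatible with the maps to $C^*(BE;k)$; this follows from naturality of the discrete approximation. One must also check that the coinduction statement survives, since possibly only $R\in\mathrm{Loc}^{\otimes}$ rather than Thick is available. For coinduction I would use that $\mathrm{Hom}_R(-,M)$ converts localizing subcategories into colocalizing conditions, so localizing generation suffices.
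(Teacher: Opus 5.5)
Your reduction is correct. Ind and CoInd are jointly conservative as soon as $C^*(BG;k)$ lies in the localizing ideal generated by the $R_E$. Your finite-$\underline{E}G$ case is essentially the paper's argument. The finite cell filtration of the Borel construction gives $C^*(BG;k)\in\mathrm{Thick}\langle C^*(BG_\sigma;k)\rangle$ (\cref{prop:findim model}). The finite-group input (Carlson / Benson--Krause) then finishes, and it actually gives the honest thick statement: $C^*(BH;k)$ lies in the thick subcategory of $\Mod_{C^*(BH;k)}$ generated by the $C^*(BE;k)$, $E\le H$, not merely in a thick tensor ideal.

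The genuine gap is the locally finite Artinian case. \cite{BG} proves Chouinard's theorem only for compact Lie groups. Broto--Levi--Oliver do give a $p$-local compact group $\mathcal{G}$ with $C^*(BG;k)\simeq C^*(B\mathcal{G};k)$, but its Sylow subgroup is a discrete $p$-toral group $S$. In general there is no compact Lie group $\Gamma$ with $B\Gamma^\wedge_p\simeq BS^\wedge_p$, because the $\mathbb{Z}_p$-lattice on which $S/S_0$ acts need not come from a $\mathbb{Z}$-lattice. Chouinard's theorem for $\mathcal{G}$ reduces, as in \cite{BCHV}, to Chouinard's theorem for $S$. That was known only for \emph{geometric} $S$, which is exactly the case where your compact Lie model exists. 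Removing this hypothesis is one of the main points of the paper (\cref{thm-costratification-discrete}), so "invoke \cite{BG}" assumes what has to be proved. You also cannot manufacture $\Gamma$ as the closure of $G$ under a faithful unitary representation, since the closure typically has a larger torus. For example, $\mathbb{Z}/q^\infty\subset U(1)$ with $q\neq p$ is dense, yet $H^*(B\mathbb{Z}/q^\infty;\mathbb{F}_p)=\mathbb{F}_p$. (Note also that a locally finite Artinian group is a finite extension of a divisible group involving finitely many primes, not just a $p$-torus.)

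What is missing is the paper's actual mechanism. Since $G=\bigcup_n G_n$ is a countable union of finite subgroups, $\underline{E}G$ has a one-dimensional model. Hence $C^*(BG;k)$ is the fiber of a map between products of cochains $C^*(BH;k)$ over infinitely many finite subgroups $H$. Knowing $C^*(BH;k)\in\mathrm{Thick}\langle C^*(BE;k)\rangle$ separately for each $H$ is then not enough. An infinite product is guaranteed to stay in the thick closure only if the number of cone steps is bounded uniformly in $H$. The paper obtains that bound as follows:
\begin{itemize}
\item it restricts a single faithful representation $G\hookrightarrow U(N)$, induced from the divisible abelian subgroup of finite index, to every finite $H\le G$;
\item it runs the Benson--Greenlees argument with the finite $H$-complex $U(N)/S$, where $S$ is the set of elements of order $p$ in a maximal torus, whose dimension depends only on $N$ (\cref{prop-discrete-ptora-defect});
\item combined with the finiteness of $\mathcal{E}_p(G)$ up to conjugacy, this feeds into \cref{thm-weakly}.
\end{itemize}
Your first-case argument, applied to the one-dimensional model with this uniform bound added, is the repair.
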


Let us make a few remarks. First, in contrast to the situation for finite groups, induction and coinduction need not agree for infinite groups. Second, the classical Chouinard theorem \cite{Chouinard} holds over more general rings, making it natural to seek a corresponding level of generality in our setting. Finally, our theorem extends to a broader context. Before turning to these developments, we briefly discuss the motivation for generalizing Chouinard's theorem.

Our motivation stems from the work of Barthel--Castellana--Heard--Valenzuela in \cite{BCHV}, which studies the classification of localizing ideals in the module category $\Mod_{C^\ast(B\mathcal{G};\mathbb F_p)}$ via BIK-stratification \cite{BIK}, where $\mathcal{G}$ is a $p$-local compact group. More generally, \cite{BCHV_Noetherian} investigates $\Mod_{C^\ast(X;\mathbb F_p)}$ for spaces $X$ whose mod $p$ cohomology is Noetherian. Both works follow a common descent strategy: establishing a form of strong Quillen stratification of the homogeneous prime ideal spectrum together with a Chouinard-type theorem on $\Mod_{C^\ast(X;\mathbb F_p)}$.

In particular, in the context of \cite{BCHV}, stratification of the module category over the rings considered in \textit{loc. cit.} is shown to be equivalent to  the joint conservativity of the induction and coinduction functors
\[
\left(\mathrm{Ind}_G^E, \, \mathrm{CoInd}^E_G\colon \Mod_{C^\ast(BS;\mathbb F_p)}\to\Mod_{C^\ast(BE;\mathbb F_p)}\right)_{E\in \mathcal{E}_p(G)}
\]
where $S$ is a discrete $p$-toral group. Surprisingly, the question of joint conservativity remained open, and our work provides an answer to this question. It is worth mentioning that in our previous work \cite{Conservative}, we showed that the family of induction functors is jointly conservative for any discrete $p$-toral group $S$, using the theory of purity in triangulated categories.

The primary goal of this work is to establish a Chouinard-type theorem for the category $\Mod_{C^\ast(BG;R)}$ for a broad class of infinite groups, allowing for a more general choice of coefficient ring spectrum $R$. We then deduce consequences for stratification. Our approach is further motivated by the work of  Mathew--Naumann--Noel \cite{mathew2019derived} in equivariant stable homotopy theory, where they introduced the notion of the derived defect base of an equivariant ring. This concept plays an important role in our work, as we now explain.

For a discrete group $G$ and a commutative ring spectrum $R$, we introduce the notion of a $G$-finite derived defect base for $R$. This is a family of finite subgroups of $G$, finite up to conjugacy, such that the cochains over any finite subgroup can be built, in a uniformly bounded number of steps, from cochains associated to groups in the family. See \cref{def-g-finite-derived} for the precise definition.

The following result corresponds to \cref{thm-weakly}. 

\begin{theorem}\label{introthm-weakly}
    Let $G$ be a discrete group with a stable finite-dimensional model for $\underline{E}G$ in the sense of \cite{BDP17}, and $R$ be a commutative ring spectrum. Assume that $R$ has a \textit{$G$-finite derived defect base} $\mathcal{F}$. Then the geometric functor \[\mathrm{Ind}_\mathcal{F}\colon \Mod_{C^\ast(BG;R)}\xrightarrow[]{}\prod_{K\in \mathcal{F}}\Mod_{C^\ast(BK;R)}\] is weakly descendable in the sense of \cref{rec-weakly descendable}. 
\end{theorem}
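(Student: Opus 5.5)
Weak descendability of $\mathrm{Ind}_\mathcal{F}$ should mean that the unit $C^\ast(BG;R)$ lies in the thick tensor ideal, or a localizing-ideal variant with a uniform bound, generated by the images of the right adjoints, i.e.\ by the commutative algebras $C^\ast(BK;R)$ for $K\in\mathcal{F}$, viewed as $C^\ast(BG;R)$-algebras via restriction. I plan to establish this in two stages: first reduce from $G$ to its finite subgroups using the stable finite-dimensional model for $\underline{E}G$, then reduce from finite subgroups to $\mathcal{F}$ using the defect base hypothesis.

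\textbf{Stage 1: from $G$ to finite subgroups.} A stable finite-dimensional model for $\underline{E}G$ means, after stabilization, a finite-length filtration whose subquotients are wedges of suspensions of $G/H_+$ with $H$ finite. Applying $C^\ast(EG\times_G(-);R)$, which turns finite-length filtrations into towers of fiber sequences and wedges into products, expresses $C^\ast(BG;R)\simeq C^\ast(EG\times_G \underline{E}G;R)$ as an iterated extension, of length $n=\dim$, of products $\prod_i \Sigma^{m_i} C^\ast(BH_i;R)$. Each $C^\ast(BH_i;R)$ is a $C^\ast(BG;R)$-module through restriction. The point to verify is that these infinite products still lie in the relevant ideal. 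This is exactly why ``weakly'' descendable rather than descendable is the right notion, and it is the reason to work with product-closed ideals, or with the nilpotence-of-fiber formulation, which handles arbitrary products.

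\textbf{Stage 2: from finite subgroups to $\mathcal{F}$.} By the definition of a $G$-finite derived defect base (\cref{def-g-finite-derived}), every finite $H\le G$ has $C^\ast(BH;R)$ built from the $C^\ast(BK;R)$, $K\in\mathcal{F}$ up to conjugacy, in at most $N$ steps, with $N$ independent of $H$. I will transport this into $\Mod_{C^\ast(BG;R)}$. Restriction from $C^\ast(BH;R)$ along $C^\ast(BG;R)\to C^\ast(BH;R)$ is exact and sends the $C^\ast(BH;R)$-module $C^\ast(BK;R)$ to $C^\ast(BK;R)$ regarded over $G$. Conjugation invariance lets me replace subgroups of $H$ by members of $\mathcal{F}$. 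The uniform bound $N$ is precisely what lets the products $\prod_i C^\ast(BH_i;R)$ be built in $N$ steps from products of the $C^\ast(BK;R)$, since building steps with uniform length commute with products. Combining with Stage 1 gives a total bound of about $(n+1)N$ steps. Equivalently, the fiber of the unit map $\mathbb 1\to\prod_K C^\ast(BK;R)$ is $\otimes$-nilpotent of that order, in the sense of \cref{rec-weakly descendable}.

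The main obstacle is the interaction between infinite products and the building process. Infinitely many finite subgroups can occur, through infinitely many cells, and the $\mathcal{F}$-indexed product must handle all of them with a uniform bound. I would first try to phrase both stages as statements about nilpotence of maps of the form $\mathrm{fib}(\mathbb 1\to A)^{\otimes m}\to\mathbb 1$, which are stable under products of algebras. If that fails, I would fall back to the explicit level-counting argument with the uniform $N$.
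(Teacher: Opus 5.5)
Your proposal is correct and follows essentially the paper's argument. Stage 1 is \cref{prop:findim model}, the finite tower of fiber sequences $\Sigma^n Y_n\to C^\ast(EG_+\wedge_G X^n;R)\to C^\ast(EG_+\wedge_G X^{n-1};R)$. Stage 2 is the body of the paper's proof: the uniform bound from condition (b), conjugation invariance, and the fact that $\mathrm{Res}_\mathcal{F}$ commutes with products (with $\mathcal{F}$ finite) put each $Y_n$, hence $C^\ast(BG;R)$, in $\mathrm{Thick}(\mathrm{Im}\,\mathrm{Res}_\mathcal{F})$.

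Your thick (indeed nilpotence) statement is stronger than the paper's definition of weak descendability, $\mathbb{1}\in\mathrm{Locid}(\ldots)$, which the paper deduces from it because $\mathrm{Res}_\mathcal{F}$ preserves colimits. So infinite products are not what forces the ``weak'' notion: your Stage 2 already handles them inside a thick subcategory.
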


The upshot is that, if $R$ is of the form $Hk$, where $k$ is a field of positive characteristic $p$, then $R$ admits a $G$-finite derived defect base for locally finite Artinian groups that is contained in the family of elementary abelian $p$-subgroups; see \cref{prop-discrete-ptora-defect}. Moreover, any commutative ring spectrum admits a $G$-finite derived defect base whenever $G$ has a finite model for $\underline{E}G$; see \cref{sec-chouinard} for further details.

As an immediate consequence of the previous theorem, we obtain the desired Chouinard-type result, stated in \cref{intro-thm-chouinard}. In particular, this recovers and extends our earlier result \cite[Theorem 1.5]{Conservative} for locally finite Artinian groups.


Furthermore, this Chouinard-type theorem is not the only direct application of weak descendability. Indeed, by \cite{barthel2024homological}, one obtains a parametrization of the localizing subcategories of $\Mod_{C^\ast(BG;R)}$ in terms of subsets of the so-called homological spectrum; see \cref{sec-homological}. This also completes the work of \cite{BCHV} on the stratification of $p$-local compact groups. The following result corresponds to \cref{thm-costratification-discrete} and \cref{coro-stratification-discrete}.

\begin{theorem}
   Let $\mathcal{G}$ be either a $p$-local compact group with discrete $p$-toral Sylow subgroup $S$, or a locally finite Artinian group. Then $\Mod_{C^*(B\mathcal{G};\mathbb{F}_p)}$ is both stratified and costratified by the canonical action of $H^*(B\mathcal{G};\mathbb{F}_p)$.  
\end{theorem}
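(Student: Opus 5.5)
The plan is to deduce stratification and costratification from the weak descendability of \cref{introthm-weakly}. Specifically, I would combine it with the homological-spectrum machinery of \cite{barthel2024homological} and the strong Quillen stratification results already available in \cite{BCHV}. I treat the two cases in parallel. For a $p$-local compact group $\mathcal{G}$ with Sylow $S$, I first reduce to $S$. In \cite{BCHV} it is shown that $\Mod_{C^*(B\mathcal{G};\mathbb{F}_p)}$ is stratified and costratified as soon as the functors $\mathrm{Ind}$ and $\mathrm{CoInd}$ to the $\Mod_{C^*(BE;\mathbb{F}_p)}$, with $E\in\mathcal{E}_p(S)$, are jointly conservative. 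So in this case it suffices to prove that joint conservativity. A discrete $p$-toral group $S$ is locally finite Artinian, so both cases reduce to a locally finite Artinian group $G$.

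For such $G$, \cref{prop-discrete-ptora-defect} provides a $G$-finite derived defect base $\mathcal{F}\subseteq\mathcal{E}_p(G)$ for $H\mathbb{F}_p$. I also need the hypothesis on $\underline{E}G$: I would check that locally finite Artinian groups admit a stable finite-dimensional model in the sense of \cite{BDP17}, which is presumably done where \cref{thm-chouinard-mod-p} is proved. Granting this, \cref{introthm-weakly} makes $\mathrm{Ind}_\mathcal{F}$ weakly descendable. Weakly descendable geometric functors are jointly conservative together with their right adjoints, which are the coinductions. This yields \cref{intro-thm-chouinard} for this $G$ and completes the \cite{BCHV} argument for $p$-local compact groups.

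For a locally finite Artinian $\mathcal{G}=G$ itself, I would argue directly. By \cite{barthel2024homological}, weak descendability implies that the induced map on homological spectra is surjective, and that (co)stratification descends along weakly descendable maps provided the target categories are (co)stratified and the map on Balmer/homogeneous spectra is compatible. Each $\Mod_{C^*(BE;\mathbb{F}_p)}$ is stratified and costratified by the polynomial part of $H^*(BE)$, by Benson--Krause and BIK. I would then invoke the descent criterion of \cite{BIK} and \cite{BCHV}. The local-to-global principle holds because $H^*(BG;\mathbb{F}_p)$ is Noetherian for these groups, since they are $p$-toral up to finite index. Minimality at each $\mathfrak{p}$ follows from the joint conservativity of $\mathrm{Ind}$ together with strong Quillen stratification of $\Spec^h H^*(BG)$ by the elementary abelian subgroups. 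Costratification follows similarly using $\mathrm{CoInd}$ and the cosupport formalism.

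The main obstacle is this last minimality step for costratification. Costratification requires detecting cosupport through coinduction, and for infinite $G$ the coinduction functor differs from induction. The key input is therefore that weak descendability yields conservativity of $\mathrm{CoInd}$, and not only of $\mathrm{Ind}$. I would first try to prove this by observing that $\mathbb{1}$ lies in the thick ideal generated by the algebra $A=\prod_K C^*(BK)$. From this, $\Hom(A,M)=0$ forces $\Hom(\mathbb{1},M)=0$, which gives the coinduction statement.
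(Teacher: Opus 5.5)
Your proposal is correct and follows the paper's route. Chouinard's condition for $\mathrm{Ind}$ and $\mathrm{CoInd}$ over a locally finite Artinian group (in particular over the discrete $p$-toral Sylow $S$) comes from \cref{thm-weakly} together with the defect base of \cref{prop-discrete-ptora-defect}, and this is exactly the input that replaces geometricity in the proofs of \cite[Theorems 5.12, 5.14 and Proposition 5.13]{BCHV}. The paper simply invokes those proofs.

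Two small remarks. The finite-dimensional model you flag is \cref{ex-locally finite}: these groups are countable, so a $1$-dimensional model exists. Also, conservativity of $\mathrm{CoInd}$ (the right adjoint of $\mathrm{Res}$, not of $\mathrm{Ind}$) is already \cref{weakly des are conservative}, and it only needs $\mathbb{1}\in\mathrm{Locid}$ rather than the thick ideal. So your last paragraph and the detour through homological spectra are unnecessary.
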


We now focus on the category $\Mod_{C^\ast(BG;k)}$ for $G$ a group with a finite model for $\underline{E}G$ and $k$ a commutative Noetherian ring. In this setting, we study when this category is \textit{cohomologically stratified} by the cohomology ring $H^\ast(G;k)$ in the sense of \cite{BIK}. That is, we ask whether the localizing subcategories of $\Mod_{C^\ast(BG;k)}$ are in bijection with subsets of the homogeneous spectrum of $H^\ast(G;k)$ via BIK-support.

The first obstruction to attempting such a cohomological stratification is whether the cohomology ring $H^\ast(G;k)$ is Noetherian. While this question has been addressed in the literature when $k=\mathbb F_p$ (see \cite{hennUnstableModules}, \cite{Qui71}, and \cite{BK02}), little seems to be known in the general case. Using our results on weak descendability, we establish the following theorem, corresponding to \cref{thm-cfg}.

\begin{theorem}\label{introthm-cfg}
      Let $G$ be a group  admitting a finite model for $\underline{E}G$ and $k$ be a commutative Noetherian ring. Then  $H^\ast(G;k)$ is a finitely generated $k$--algebra.  
\end{theorem}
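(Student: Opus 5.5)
Write $A=C^*(BG;k)$ and $A_K=C^*(BK;k)$ for a finite subgroup $K$. The plan is to use weak descendability (\cref{introthm-weakly}) to reduce finite generation of $H^*(G;k)$ to the classical Evens--Venkov finite generation for finite groups, via a descent/Nilpotence argument in the spirit of Quillen's and Mathew's proofs.

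\textbf{Step 1: a defect base.} Since $G$ has a finite model for $\underline{E}G$, it has finitely many conjugacy classes of finite subgroups. As remarked in the introduction, the family $\mathcal{F}$ of all finite subgroups of $G$ is then a $G$-finite derived defect base for $Hk$. By \cref{introthm-weakly}, the functor $\mathrm{Ind}_\mathcal{F}$ is weakly descendable. Because there are only finitely many classes up to conjugacy, the product may be replaced by the finite product $B=\prod_{[K]}A_K$. This is a commutative $A$-algebra, and $A$ lies in the thick tensor ideal generated by $B$.

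\textbf{Step 2: a finite descent spectral sequence.} The cobar (Amitsur) resolution $A\to B\rightrightarrows B\otimes_A B\cdots$ has an associated descent spectral sequence
\[
E_1^{s,t}=\pi_{-t}(B^{\otimes_A(s+1)})\Rightarrow H^{s+t}(G;k).
\]
By (weak) descendability it collapses with a horizontal vanishing line at some finite page $E_r$. The key identification is that $B\otimes_A B$ is computed from cochains on the homotopy pullbacks $BK\times_{BG}BL$. Each of these is a disjoint union, indexed by double cosets $K\backslash G/L$, of classifying spaces of finite groups $K\cap gLg^{-1}$. Using $G$-finiteness of $\underline{E}G$, I would argue that these are finite products of $A_H$ with $H$ finite. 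This step relies on Künneth/Eilenberg--Moore-type convergence for $k$ Noetherian, which holds because the fibres are discrete.

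\textbf{Step 3: deduce finite generation.} Each $E_1^{s,*}$ is a finite product of $H^*(H;k)$ with $H$ finite. By Evens--Venkov, each of these is a finitely generated $k$-algebra and a finitely generated module over a polynomial ring coming from a Chern-class (faithful representation) map. For the comparison I would fix a single Noetherian graded $k$-algebra $P$ mapping to $H^*(G;k)$; for example, $P$ can be generated by Chern classes of a representation induced from a faithful representation of a finite-index torsion-free-by-finite quotient, or more directly by images of norms. The goal is that every $E_1^{s,*}$ is a finitely generated $P$-module. Then every page up to $E_r$ is a finitely generated $P$-module, since $P$ is Noetherian. The $E_\infty$ page has only finitely many nonzero rows, so $H^*(G;k)$ has a finite filtration with finitely generated $P$-module subquotients. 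Hence $H^*(G;k)$ is a finitely generated $P$-module, and therefore a finitely generated $k$-algebra.

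\textbf{Main obstacle.} The hard part is producing the common Noetherian ring $P$ that acts compatibly on all the terms, that is, an element-level finite generation argument over a possibly non-field $k$. My first attempt would be this: since $G$ is virtually torsion-free with finite $\underline{E}G$, embed $G$ into a compact Lie group only up to finite kernel on finite subgroups. A cleaner alternative is to take products of norm classes, i.e. the Evens norm from a finite-index subgroup, chosen so that their restrictions to each finite $K$ generate a subring over which $H^*(K;k)$ is finite. This is Quillen's strategy, and the norms should be constructible from the finitely many cells of $\underline{E}G$.
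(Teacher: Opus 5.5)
\textbf{There is a genuine gap: the descent spectral sequence in Step~2 cannot give the finiteness you need, and the ring $P$ in Step~3 is never constructed.}

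Your Step~2 is false. The pullback $BK\times_{BG}BL\simeq (G/L)_{hK}$ has one component for each double coset in $K\backslash G/L$. This set is infinite whenever $G$ is infinite and $K,L$ are finite, and a finite model for $\underline{E}G$ does not change that. Discreteness of the fibre also does not give Eilenberg--Moore convergence. Take $G=\mathbb{Z}$ (where $\underline{E}G=\mathbb{R}$) and $K=L=1$. Then $B=k$ and $A=C^*(S^1;k)$, and the K\"unneth spectral sequence with $E^2=\mathrm{Tor}^{\Lambda_k(x)}(k,k)$ is concentrated in total degree $0$. So $\pi_*(B\otimes_A B)$ lives in degree $0$, where it carries an exhaustive filtration with infinitely many layers isomorphic to $k$. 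This is not $C^*(\Omega S^1;k)=\prod_{\mathbb{Z}}k$, and neither of the two is finitely generated over $k$. Hence it is not finitely generated over any $P$ acting through $H^*(\mathbb{Z};k)=\Lambda_k(x)$. The finiteness of $E_1^{s,*}$ that Step~3 requires therefore fails in the simplest example, even though $A\to B$ is descendable there. The remedy is to avoid tensor powers of $B$ over $A$ altogether and use the cellular filtration coming from the finite model (\cref{prop:findim model}). This builds $C^*(BG;k)$ in finitely many cofiber sequences from finite products of $\Res^H_G C^*(BH;k)$ with $H$ finite, and their homotopy is just $\prod H^*(H;k)$. For a Noetherian $P$ mapping to $H^*(G;k)$, the condition ``$\pi_*M$ is a finitely generated $P$-module'' is thick. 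So it suffices that each $H^*(H;k)$ is finite over $P$, and this is exactly how the paper concludes.

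The second gap is the one you flag yourself: the ring $P$. Your suggested constructions require $G$ to be virtually torsion-free, or at least to have a finite-dimensional representation that is faithful on every finite subgroup. A finite model for $\underline{E}G$ gives neither. Finitely generated linear groups are residually finite, and there are groups with cocompact $\underline{E}G$ containing an element of order $2$ that lies in every finite-index subgroup. An example, by Deligne's theorem, is the preimage of $\mathrm{Sp}_{2n}(\mathbb{Z})$, $n\ge 2$, in the $4$-fold cover of $\mathrm{Sp}_{2n}(\mathbb{R})$. Every finite-dimensional representation kills such an element, and there is no torsion-free finite-index subgroup to take norms from.

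The missing idea is not to look for a representation of $G$ at all. Let $N$ be the lcm of the orders of the finitely many conjugacy classes of finite subgroups, and set $\rho_F=\tfrac{N}{|F|}\rho_{\mathrm{reg}}\colon F\to U(N)$. Restrictions and conjugates of these are again multiples of regular representations of dimension $N$, hence conjugate in $U(N)$. They therefore assemble into a map $H^*(BU(N);k)\to B=\lim_{\mathcal{O}_{\Fin}(G)^{\mathrm{op}}}H^*(F;k)$. By Venkov's theorem this makes $B$ finitely generated, with each $H^*(F;k)$ finite over it (\cref{prop-fg-limitF}). To return to $H^*(G;k)$, one uses that restriction $H^*(G;k)\to B$ is an $\mathcal{N}$-isomorphism (\cref{prop-F-i}, via the Bredon spectral sequence of the finite-dimensional $\underline{E}G$). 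Consequently $B$ is integral over the image, and Artin--Tate makes the image finitely generated with $B$ finite over it. Lifting generators then gives the required finitely generated $P\subseteq H^*(G;k)$.
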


The previous theorem in particular allow us to obtain the following result; see \cref{thm-cohomologica-stra}.

\begin{theorem}
Let $G$ be a group  admitting a finite model for $\underline{E}G$ and $k$ be a commutative Noetherian ring. Then $\Mod_{C^\ast(BG;k)}$ is cohomologically stratified by $H^\ast(G;k)$. 
\end{theorem}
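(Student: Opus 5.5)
The strategy is to reduce cohomological stratification of $\Mod_{C^\ast(BG;k)}$ by $H^\ast(G;k)$ to the finite subgroups, in the spirit of the descent arguments of \cite{BCHV_Noetherian}. The two inputs are \cref{introthm-cfg}, which makes $H^\ast(G;k)$ Noetherian so that BIK-support is available, and the weak descendability of \cref{introthm-weakly}. Since $G$ has a finite model for $\underline{E}G$, it has only finitely many conjugacy classes of finite subgroups. Any commutative ring spectrum, in particular $Hk$, then admits a $G$-finite derived defect base $\mathcal{F}$, and we may take $\mathcal{F}$ to consist of representatives of the conjugacy classes of all finite subgroups. For each finite $K\in\mathcal{F}$, $\Mod_{C^\ast(BK;k)}$ is cohomologically stratified by $H^\ast(K;k)$; this is known for finite groups over Noetherian rings, by Benson--Iyengar--Krause for fields and its extension to Noetherian coefficients via fibrewise stratification over $\Spec k$. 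So the first step is to record stratification for each factor of the target of $\mathrm{Ind}_\mathcal{F}$.

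The second step transports stratification along the weakly descendable functor. By \cite{barthel2024homological}, weak descendability of $\mathrm{Ind}_\mathcal{F}$ gives the following. The localizing ideals of $\Mod_{C^\ast(BG;k)}$ are determined by their images, and the homological spectrum of the source is the image of the homological spectra of the targets. Equivalently, the local-to-global principle and minimality can be tested after applying $\mathrm{Ind}_K$. I would use the abstract descent criterion for BIK-stratification along a family of geometric functors. In \cite{BCHV_Noetherian} it takes the following form. If the target categories are stratified by Noetherian rings $H^\ast(K;k)$, if the family is jointly conservative (which follows from weak descendability), and if the induced map
\[
\coprod_{K\in\mathcal{F}} \Spec^h H^\ast(K;k)\to \Spec^h H^\ast(G;k)
\]
satisfies a Quillen-type stratification condition, then the source is stratified. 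In particular, each fibre over a prime $\pp$ must be a single orbit under the conjugation/fusion action, so that minimality at $\pp$ can be checked on one $K$.

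The third step, and the main obstacle, is this Quillen stratification for $H^\ast(G;k)$ with $k$ an arbitrary Noetherian ring. Over $\mathbb F_p$ it is Quillen's theorem for groups with finite $\underline{E}G$ (via \cite{hennUnstableModules}, \cite{Qui71}). For general $k$ I would first split over $\Spec k$. Fibrewise, $C^\ast(BG;k)\otimes_k \kappa(\mathfrak{m})$ is $C^\ast(BG;\kappa(\mathfrak{m}))$. In residue characteristic $0$, or $p$ not dividing the orders of finite subgroups, everything is concentrated at the trivial subgroup. In characteristic $p$, Quillen's theorem over $\kappa(\mathfrak{m})$ applies. The finite generation of \cref{introthm-cfg}, proved by the same descent (a finite totalization built from finitely many $H^\ast(K;k)$ gives finiteness via a spectral sequence with a horizontal vanishing line), ensures the fibrewise statements assemble. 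The fibres issue is the delicate point: one must check that the map on homogeneous spectra is a finite morphism with the expected orbit description. I expect to handle this by identifying $H^\ast(G;k)$, up to nilpotents and $F$-isomorphism, with the limit of $H^\ast(K;k)$ over the orbit category of finite subgroups, using the finite $\underline{E}G$ and the bounded-step descent.

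With these in place, the local-to-global principle holds because $H^\ast(G;k)$ is Noetherian and the category is compactly generated by the unit. Minimality of $\Gamma_\pp$ then follows by pulling back along $\mathrm{Ind}_K$ for a $K$ realising $\pp$. This completes the proof.
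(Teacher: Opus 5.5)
This is essentially the paper's proof. The ingredients are the same: stratification for each finite $F$, Noetherianity of $H^\ast(G;k)$ from \cref{introthm-cfg}, and biconservativity of $\mathrm{Ind}_{\Fin(G)/G}$. Note that you need coinduction as well as induction here, not just joint conservativity, and weak descendability supplies both. The last ingredient is simple Quillen lifting obtained from the $\mathcal{N}$-isomorphism $H^\ast(G;k)\to\lim_{\mathcal{O}_{\Fin}(G)^{\mathrm{op}}}H^\ast(F;k)$, and everything is fed into \cite[Theorem 2.14]{BCHV_Noetherian}.

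Your fibrewise splitting over $\Spec k$ is unnecessary, and its ``assembling'' step is not justified as written. The paper works directly over $k$:
\begin{enumerate}
\item It gets the orbit description of the fibres from Quillen's Lemma 8.11.
\item That lemma needs each $H^\ast(F;k)$ to be finite over the limit. The paper obtains this from the compatible faithful representations $F\hookrightarrow U(N)$ with $N=\mathrm{lcm}|F|$.
\item It then uses the Avrunin--Scott identities for the finite groups to propagate supports along zig-zags.
\end{enumerate}
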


We emphasize that, at this level of generality, this result appears to be new to the best of our knowledge. 

\subsection*{Outline of the document} Section 2 contains preliminaries on module categories over commutative ring spectra, recalls basic facts about classifying spaces for proper actions, and establishes the relevant terminology. In Section 3, we introduce the notion of a $G$-finite derived defect base for a commutative ring spectrum. Section 4 is devoted to the proof of \cref{introthm-weakly} and also provides examples illustrating situations in which the theorem applies. In Section 5, we prove \cref{introthm-cfg}.
Finally, Section 6 is concerned with cohomological stratification of $\Mod_{C^\ast(BG;k)}$ for certain classes of infinite groups and further consequences of \cref{thm-weakly}.

\subsection*{Notation and conventions}

Throughout this paper, we assume familiarity with BIK-stratification \cite{BIK}, as well as with the basics of tensor triangular geometry \cite{Bal10}; see also \cite{stevenson2018tour}. We also adopt the following conventions.

\begin{itemize}
\item We will implicitly view symmetric monoidal stable $\infty$-categories as tensor-triangulated categories via their homotopy categories whenever we invoke results or terminology from tensor triangular geometry, unless the context requires greater precision.

\item For a collection of objects $\mathcal{X}$ in a triangulated category $\T$, we write $\mathrm{Loc}(\mathcal{X})$ and $\mathrm{Thick}(\mathcal{X})$ to denote the smallest localizing subcategory of $\T$ containing $\mathcal{X}$ and the smallest thick subcategory of $\T$ containing $\mathcal{X}$, respectively.

\item We reserve the letter $R$ to denote an arbitrary commutative ring spectrum, and the letter $k$ to denote a discrete commutative ring. We will also occasionally abuse notation by writing $k$ for the Eilenberg–MacLane spectrum associated to $k$.
\end{itemize}

\subsection*{Acknowledgements} 
We are grateful to Drew Heard for his helpful comments on an earlier version of this work.
The first author is deeply grateful to T. Barthel, D. Heard, and G. Valenzuela for many discussions on Chouinard's theorem for discrete $p$-toral groups following our joint work \cite{BCHV}. 

NC would like to thank the Isaac Newton Institute for Mathematical Sciences, Cambridge, for support and hospitality during the programme Equivariant homotopy theory in context where work on this paper was undertaken. This work was supported by EPSRC grant no EP/R014604/1. NC is partially supported by Spanish State Research Agency project PID2024-158573NB-I00, the Severo Ochoa and María de Maeztu Program for Centers and Units of Excellence in R$\&$D (CEX2020-001084-M), and the CERCA Programme of the Generalitat de Catalunya. JOG is supported by the Deutsche Forschungsgemeinschaft (Project-ID 491392403 – TRR 358).


\section{Preliminaries}

This section is devoted to establishing some notation and terminology regarding module categories over ring spectra, classifying spaces for proper actions, and the notion of weak descendability in tensor-triangular geometry. We provide relevant references accordingly.

\subsection{Modules over algebras of cochains} Our main reference for this subsection is \cite[Section 2]{BCHV}.  

\begin{recollection}
    Let $R$ be a commutative ring spectrum. We write $\Mod_R$ to denote the category of $R$-module spectra equipped with the relative smash product $\otimes=\otimes_R$. This is a rigidly-compactly generated triangulated category. Explicitly, this means that $\Mod_R$ is a compactly generated stable $\infty$-category, the functor $\otimes$ is colimit preserving in each variable, the monoidal unit $R$ is a compact object, and any compact object is dualizable. 

In fact, the category $\Mod_R$ is generated by $R$ in the sense that $\Mod_R= \mathrm{Loc}(R)$. A well-known consequence of this fact is that any localizing subcategory of $\Mod_R$ is automatically a $\otimes$-ideal. 

Moreover, since the graded endomorphism ring of the unit in the homotopy category of $\Mod_R$ agrees with $\pi_\ast R$, we obtain that this category is $\pi_\ast R$-linear in the sense of \cite{BIK}.
\end{recollection}

\begin{recollection}\label{rec-base-change}
Let $f\colon R\to S$ be a morphism of commutative ring spectra. Then induction along $f$ (also known as  extension of scalars)
\[
\mathrm{Ind}_f\coloneqq -\otimes_R S\colon \Mod_R\to \Mod_S
\]
is a \textit{geometric functor}; that is, it is a symmetric monoidal exact functor between rigidly-compactly generated tt-categories which commutes with colimits. Hence it comes with a chain of adjunctions 
\[
\mathrm{Ind}_f\dashv \mathrm{Res}_f \dashv \mathrm{Coind}_f
\]
The restriction functor $\mathrm{Res}_f$ is given by restriction along $f$ and it is always conservative. The coinduction functor $\mathrm{Coind}_f$ (also known as coextension of scalars) is given by $\Hom_R(S,-)$. Moreover, the projection formula holds. That is, for any $M\in \Mod_R$ and $N\in \Mod_S$ there is a natural isomorphism
\[
\mathrm{Res}_f( \mathrm{Ind}_fM\otimes  N)\simeq M\otimes \mathrm{Res}_f N.
\]
The projection formula holds for any \textit{geometric functor}; see \cite{BDS16}.
\end{recollection}

We are interested in the following situation. 

\begin{recollection}
 Let $G$ be a (discrete) group and $R$ be a commutative ring spectrum. We write $C^\ast(BG;R)\coloneqq F(\Sigma^\infty_+BG,R)$ to denote the function spectrum of $R$--valued cochains on $BG$, where $BG$ denotes the classifying space of $G$. It is a commutative ring spectrum with the ring structure induced by that of $R$.
  In the particular case where $R=Hk$ is the Eilenberg–MacLane spectrum of a (discrete) commutative ring  $k$, we simply write $C^\ast(BG;k)$. In particular,  
    \[
    \pi_{-i}(C^\ast(BG;k))\cong H^i(G;k).
    \]
 For a subgroup $H \leq G$, we have an induced ring morphism 
 \[
 C^\ast(BG;R)\to C^\ast(BH;R)
 \]
 We write $\mathrm{Ind}_G^H$, $\mathrm{Res}_G^H$ and $\mathrm{Coind}_G^H$ to denote the corresponding induction, restriction and coinduction functors along the previous ring morphism on module categories over $C^\ast(BG;R)$ and $C^\ast(BH;R)$; see \cref{rec-base-change}. 
\end{recollection}

\begin{notation}
   For simplicity, we write $\Hom_G(-,-)\coloneqq \Hom_{\Mod_{C^\ast(BG;R)}}(-,-)$ unless the context might bring some ambiguity.
\end{notation}

We finish this subsection with the following definition. 

\begin{definition}
    We say that a commutative ring spectrum $R$ is \textit{Noetherian} if $\pi_\ast R$ is a (graded) Noetherian ring. 
\end{definition}

\subsection{Classifying spaces for proper actions}\label{sec-classifying} For a group $G$, and a family $\mathcal{F}$ of subgroups of $G$, there is the notion of a classifying space for the family $\mathcal{F}$. We are interested in the case where $\mathcal{F}$ is the family of all finite subgroups of $G$. Let us make this explicit. See \cite{luck2005survey} for a survey on the topic.

\begin{recollection}
 Let $G$ be a group. Then there is a $G$-CW complex $\underline{E}G$ such that the fixed point space $\underline{E}G^H$ is contractible for every finite subgroup $H\subseteq G$, and it is empty for every infinite subgroup $H$. This space is called the \textit{classifying space for proper actions of $G$}; there is no ambiguity in the sense that $\underline{E}G$ is unique up to $G$-homotopy. 

 If there exists a $G$-CW-complex $X\simeq_G \underline{E}G$ with finitely many $G$-equivariant cells, then  we say that $G$ admits a \textit{finite model for $\underline{E}G$}. If there exists a $G$-CW-complex  $X \simeq_G \underline{E}G$ whose $G$-equivariant cells have bounded dimension, then we say that $G$ admits a \textit{finite-dimensional model for $\underline{E}G$}. 
\end{recollection}

In order motivate the relevance of this class of groups, let us provide some explicit examples of groups with a finite-dimensional model for $\underline{E}G$.

\begin{example}\label{ex-locally finite}
Locally finite groups, under certain set-theoretic conditions, provide examples of groups with finite-dimensional models. To make this precise, let $G$ be a non-finitely generated group, and define $\aleph\textrm{-}\mathrm{rank}(G)$ to be the ordinal $\alpha$ such that $\mathrm{rank}(G)=\aleph_\alpha$, where $\mathrm{rank}(G)$ denotes the smallest cardinality of a generating set of $G$. If $G$ is finitely generated, we set $\aleph\textrm{-}\mathrm{rank}(G)\coloneqq -1$.

For a locally finite group $G$, there exists a model for $\underline{E}G$ of dimension $\aleph\textrm{-}\mathrm{rank}(G)+1$; see \cite[Theorem 2.6]{Dicks}. Note that this model is not finite unless $G$ is a finite group, in which case the model is a point. 
\end{example}

    \begin{example}\label{ex-finite vcd}
Groups of finite virtual cohomological dimension over $\mathbb{Z}$ also admit finite-dimensional models for $\underline{E}G$. A standard example is $SL_n(\mathbb{Z})$. See \cite[Theorem VIII.11.1]{Bro82}.
\end{example}

\begin{example}
   By Bass–Serre theory, the fundamental group of a graph of finite groups acts on a tree with finite isotropy. Consequently, such groups admit finite-dimensional models for $\underline{E}G$. See for instance \cite{DD89}.
\end{example}

\begin{example}
    Coxeter groups provide further examples of groups with finite models for $\underline{E}G$; see \cite[Proposition 1.1 and Corollary 1.2]{CharneyDavis}.
\end{example}

\subsection{Finitely many conjugacy classes of elementary abelian subgroups} In this subsection, we discuss some classes of infinite groups with only a finite number of conjugacy classes of elementary abelian subgroups. We refer to \cite{Kro93} for unexplained terminology.

\begin{definition}
  A group $G$ is of \textit{type $FP_\infty$} if the trivial module $\mathbb{Z}$ has a resolution by finitely generated projective $\mathbb{Z}G$--modules.
\end{definition}

\begin{example}
    Let $G$ be a group with a finite model for $\underline{E}G$. Then $G$ is a group of type $\mathrm{FP}_\infty$.
\end{example}

Our interest in this class of groups lies in the following property; see for instance \cite[Theorem 1.3]{Ben97}.

\begin{theorem}\label{thm-benson-abelem}
Let $G$ be a group with a finite-dimensional model for $\underline{E}G$. If $G$ is of type $\mathrm{FP}_\infty$, then there are only finitely many elementary abelian subgroups of $G$ up to conjugacy.
\end{theorem}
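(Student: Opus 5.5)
The idea is to show that type $\mathrm{FP}_\infty$ together with a finite-dimensional model for $\underline{E}G$ forces a finite-dimensional model that has only finitely many cells in each dimension. From such a model the finiteness of conjugacy classes of elementary abelian subgroups follows from Quillen's stratification applied to the isotropy groups.

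\textbf{Step 1: a good model for $\underline{E}G$.} Since $\underline{E}G$ is finite-dimensional, $G$ has finite virtual-type behaviour. Every finite subgroup fixes a point, so finite subgroups are subconjugate to cell stabilizers. I would first show that $\mathrm{FP}_\infty$ implies there are only finitely many conjugacy classes of finite subgroups of $p$-power order, for each prime $p$ dividing the orders of the finite subgroups.

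For this I would use the Bredon-type argument in Kropholler's framework \cite{Kro93}. The cellular chain complex of $\underline{E}G$ is a finite-length resolution of $\mathbb{Z}$ by permutation modules $\bigoplus \mathbb{Z}[G/H_\sigma]$ with $H_\sigma$ finite. Comparing it with a resolution by finitely generated projectives, via a Schanuel-type or dimension-shifting argument, shows that the relevant cohomology $H^*(G;-)$ commutes with filtered colimits. In particular $H^*(G;\bigoplus_i \mathbb{F}_p[G/E_i])$ behaves like a direct sum.

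\textbf{Step 2: detecting elementary abelian subgroups cohomologically.} Suppose there were infinitely many conjugacy classes $E_1,E_2,\dots$ of elementary abelian $p$-subgroups. I would then construct a module or coefficient system, for instance $\prod_i \mathbb{F}_p[G/E_i]$ compared with $\bigoplus_i \mathbb{F}_p[G/E_i]$, or use Tate/Farrell cohomology. On this module, commutation of $H^n(G;-)$ with products or direct sums fails in high degrees $n$ beyond the dimension of $\underline{E}G$.

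The key input is that, in degrees above $\dim \underline{E}G$, the isotropy spectral sequence
\[
E_1^{s,t}=\prod_{\sigma\in \Sigma_s} H^t(G_\sigma;M)\Rightarrow H^{s+t}(G;M)
\]
makes $H^n(G;M)$ for large $n$ agree with equivariant cohomology controlled by the finite stabilizers. This is where Quillen's theorem for finite groups enters: each $E_i$ contributes a nonzero class in arbitrarily high degree. Summing over infinitely many $E_i$ yields a nonzero element of $H^n(G;\prod_i M_i)$ not in the image of $\prod$-compatible maps from $\bigoplus$. That contradicts the colimit/product commutation which $\mathrm{FP}_\infty$ guarantees, since for $\mathrm{FP}_\infty$ groups $H^n(G;-)$ commutes with both direct products and filtered colimits.

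\textbf{Main obstacle.} The hard step is Step 2: producing a precise module on which high-degree cohomology detects infinitely many conjugacy classes. My first attempt would use $M=\bigoplus_i \mathbb{F}_p[G/E_i]$ together with the Shapiro isomorphism $H^n(G;\mathbb{F}_p[G/E_i])\cong H^n(E_i;\mathbb{F}_p)$, which is valid by coinduction=induction for finite-index-free permutation modules only after care. I would then compare this with the product $\prod_i$. The point of the comparison is that the natural map
\[
\bigoplus_i H^n(E_i;\mathbb{F}_p)\to \prod_i H^n(E_i;\mathbb{F}_p)
\]
fails to be surjective when there are infinitely many summands, since each $H^n(E_i;\mathbb{F}_p)\neq 0$ for all $n$. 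On the other hand, $\mathrm{FP}_\infty$ makes this map bijective.

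Alternatively, as the paper indicates, one may simply invoke \cite[Theorem 1.3]{Ben97}, where this is carried out.
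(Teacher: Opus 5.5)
\textbf{There is a genuine gap: the self-contained argument does not work.} The contradiction you aim for is that $\mathrm{FP}_\infty$ makes the natural map
\[
\bigoplus_i H^n(G;M_i)\to\prod_i H^n(G;M_i)
\]
bijective. This is false. Type $\mathrm{FP}_\infty$ gives $H^n(G;\bigoplus_i M_i)\cong\bigoplus_i H^n(G;M_i)$, and $H^n(G;\prod_i M_i)\cong\prod_i H^n(G;M_i)$ holds for \emph{every} group. Nothing forces the map induced by $\bigoplus_i M_i\to\prod_i M_i$ to be onto. Take $G=\mathbb{Z}/p$, which is of type $\mathrm{FP}_\infty$ with $\underline{E}G=\ast$, and $M_i=\mathbb{F}_p$ for $i\in\mathbb{N}$. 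In every degree $n$ the map is the proper inclusion $\bigoplus_{\mathbb{N}}H^n(\mathbb{Z}/p;\mathbb{F}_p)\subsetneq\prod_{\mathbb{N}}H^n(\mathbb{Z}/p;\mathbb{F}_p)$. Its non-surjectivity reflects only the infinite index set of the coefficient modules and says nothing about conjugacy classes in $G$.

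Two further steps also fail. First, Shapiro's lemma computes $H^\ast(G;-)$ of the \emph{coinduced} module $\mathrm{Hom}_{\mathbb{F}_pE}(\mathbb{F}_pG,\mathbb{F}_p)$, not of the permutation module $\mathbb{F}_p[G/E]$. When $[G:E]=\infty$ already $H^0(G;\mathbb{F}_p[G/E])=\mathbb{F}_p[G/E]^G=0$, whereas $H^0(E;\mathbb{F}_p)=\mathbb{F}_p$. Second, Step 1 only announces a stronger form of the conclusion (finitely many classes of $p$-subgroups). The one tool it offers, commutation with filtered colimits, is a reformulation of $\mathrm{FP}_\infty$ and by itself says nothing about finite subgroups.

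\textbf{What a working argument needs.} The useful consequence of $\mathrm{FP}_\infty$ here is that each $H^n(G;\mathbb{F}_p)$ is finite-dimensional. From infinitely many conjugacy classes you must then produce infinitely many linearly independent classes in one \emph{fixed} degree. ``Each $E_i$ contributes a nonzero class in arbitrarily high degree'' is not enough. This requires three inputs.
\begin{enumerate}
\item A bound on $p$-ranks, which a finite-dimensional $\underline{E}G$ does not give by itself: $\bigoplus_{\mathbb{N}}\mathbb{Z}/p$ has a one-dimensional model. With such a bound, it suffices to rule out infinitely many classes of maximal elementary abelian subgroups of a common rank.
\item For each such class, a compatible element of the inverse limit of the $H^\ast(E;\mathbb{F}_p)$ over elementary abelian subgroups, supported on that class. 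Take the product of all nonzero elements of $\beta H^1(E;\mathbb{F}_p)$ (of $H^1(E;\mathbb{F}_2)$ if $p=2$) on conjugates of $E$ and zero elsewhere; it lives in a degree depending only on the rank.
\item A Quillen-type $F$-surjectivity of $H^\ast(G;\mathbb{F}_p)$ onto that limit, with an exponent independent of $E$. This is where the finite dimension of $\underline{E}G$ enters, through the isotropy spectral sequence.
\end{enumerate}
None of this appears in your sketch.

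Your closing fallback is exactly what the paper does: it gives no argument and simply cites \cite[Theorem 1.3]{Ben97}. The reasoning before that sentence, however, does not establish the statement.
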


\begin{remark}\label{rk:finite model finite conjugacy classes}
Of course, if there exists a finite model for $\underline{E}G$, then $G$ has only finitely many conjugacy classes of finite subgroups, not merely finitely many conjugacy classes of elementary abelian subgroups. 
\end{remark}

\begin{remark}
We stress that the converse of the previous theorem does not hold. For instance, we will see that the conclusion is also valid for locally finite Artinian groups which are not of type $FP_\infty$, as soon as they are not finite. Indeed, this follows since such groups are not even finitely generated.
\end{remark}

\begin{recollection}
    \label{rec: structure l.f. Artinian group}
    Let $G$ be a locally finite Artinian group. By \cite[Recollection 6.5]{Conservative} (see also \cite[Proposition 1.2]{BLO07}), the group $G$ is countable. In particular, it can be written as a filtered union $G=\cup_{n\in \mathbb{N}} G_n$ where each $G_n$ is a finite subgroup.
    
    Moreover, $G$ satisfies strong finiteness conditions with respect to conjugacy classes of certain subgroups. For every positive integer $n>0$, there are only finitely many conjugacy classes of subgroups of order $n$, and these subgroups have finite $p$-rank for every prime $p$ (see the proof of \cite[Lemma 1.4]{BLO07}). In particular, $G$ has only finitely many conjugacy classes of elementary abelian subgroups.
\end{recollection}

\subsection{Weak descendability} In this subsection we recall the notion of weak descendability in tensor-triangular geometry and some relevant consequences that we need later in this work. This notion has been studied in several papers, see e.g. \cite{BCHS}, \cite{barthel2024homological}, and \cite{Gom25}. We refer the reader to these references for further details.

Recall that a \textit{geometric functor} $f^\ast\colon \T\to \S$ is a coproduct-preserving tt-functor between rigidly-compactly generated tt-categories (see \cite{BDS16}). Let us fix a set $I$ for the rest of this section.

\begin{definition}\label{rec-weakly descendable}
      A family of geometric functors  
     \[
     (f_i^*\colon \mathcal{T} \to  \mathcal{S}_i)_{i\in {I}}
     \]
     is weakly descendable if the monoidal unit $\mathbb{1}_\mathcal{T}$ lies in $\mathrm{Locid}\left( (f_i)_*(\mathbb{1}_{\mathcal{S}_i})\mid i\in {I}\right)$, the localizing $\otimes$-ideal   generated by  the set $\{(f_i)_*(\mathbb{1}_{\mathcal{S}_i})\mid i\in {I}\}$. 
\end{definition}

\begin{remark}
Recall that a functor is \textit{conservative} if it reflects isomorphisms. In the stable setting, this is equivalent to detection of the trivial object. A family of exact functors $(f_i^\ast\colon \T\to \S_i)_{i\in {I}}$ is \textit{jointly conservative} if for any $t\in \T$, we have that $t\simeq 0$ if and only if $f_i^\ast (t)\simeq 0$ for all $i\in I$.
\end{remark}

\begin{remark}\label{weakly des are conservative}
If a family $(f^*_i\colon \T\to \S_i)_{i\in I}$ of geometric functors is weakly descendable, then for any $t\in \T$ we have 
\[
t \in \mathrm{Locid}\langle (f_i)_\ast( f_i^\ast(t) )\mid i\in I\rangle \quad \mbox{and} \quad t\in \mathrm{Colocid}\langle (f_i)_\ast( f_i^!(t) )\mid i\in I \rangle.
\]
This follows from the projection formula. In particular,  both families  $(f_i^\ast)_{i\in I}$ and $(f_i^!)_{i\in I}$ are jointly-conservative.  
\end{remark}


\section{Derived defect bases for infinite groups}

In this section, we extend the notion of derived defect base due to Mathew-Naumann-Noel \cite{mathew2019derived} to the context of modules over the cochain algebra of an arbitrary discrete group. We need some preparation. 

\begin{recollection}\label{def:F nilpotent}
   Let $G$ be a finite group and let $\mathcal{F}$ be a family of subgroups of $G$. Let $\Sp_G$ denote the symmetric monoidal $\infty$-category of $G$-spectra as in \cite[Appendix C]{gepner2023equivariant}, which agrees with the underlying $\infty$-category of the category of orthogonal $G$-spectra equipped with the stable model structure \cite[Proposition C.9]{gepner2023equivariant}. For $R \in \CAlg(\Sp_G)$, following \cite{mathew2019derived}, we say that $R$ is \textit{$\mathcal{F}$-nilpotent} if $R$ is in the thick $\otimes$-ideal generated by $A_\mathcal{F}$ which denotes the commutative algebra 
    \[ \prod_{H\in \mathcal{F}} \mathbb{D}(G/H_+)\in \CAlg(\Sp_G),\]  here  $\mathbb{D}(-)$ is short for the internal dual $\mathrm{hom}_{\mathrm{Sp}_G}(-,\mathbb 1)$. By \cite[Proposition 3.26]{BCHNP}, $R$ is \textit{$\mathcal{F}$-nilpotent} if and only if the category of $R$-modules $\mathrm{Mod}_{\Sp_G}(R)$ in $\Sp_G$ is compactly generated by $R\otimes A_\mathcal{F}$, that is,
    \begin{equation}
        \label{eq-loc-ddefectbase}
        \mathrm{Mod}_{\Sp_G}(R)=\mathrm{Loc}(R\otimes A_\mathcal{F})
    \end{equation}
    The minimal family $\mathcal{F}$ of subgroups of $G$ for which $R$ is $\mathcal{F}$-nilpotent is called the \textit{derived defect base} of $R$. 
\end{recollection}

\begin{notation}
For a commutative ring spectrum $R\in \CAlg(\Sp)$, we write $\underline{R}_G$ (or simply $\underline{R}$, when the context is clear) for its Borel completion, that is, $\underline{R}_G=F(\Sigma^\infty_+EG,R)\in \CAlg(\Sp_G)$.
\end{notation}

\begin{recollection}\label{rec-Morita theory for R_G}
   Let $G$ be a finite group and $R$ a commutative ring spectrum. Then the endomorphism ring of the monoidal unit $\underline{R}$ in $\Mod_{\mathrm{Sp}_G}(\underline{R})$ identifies with $C^\ast(BG;R)$ as a commutative ring spectrum. Indeed, the argument of \cite[Lemma 7.8]{BCHNP} applies verbatim in this setting. Moreover, by Morita theory we obtain a fully faithful  tt-functor (see \cite[Theorem 7.1.2.1]{Lur17}) 
    \[f^*\colon \Mod_{C^\ast(BG;R)}\hookrightarrow \Mod_{\mathrm{Sp}_G}(\underline{R})\] 
  whose essential image is the localizing subcategory $\mathrm{Loc}_{\Mod_{\mathrm{Sp}_G}(\underline{R})}(\underline{R})$ generated by the monoidal unit $\underline{R}$. We can identify then $\Mod_{C^\ast(BG;R)}$ with the subcategory of cellular objects in $\Mod_{\mathrm{Sp}_G}(\underline{R})$. Let 
  \[
  f_\ast\colon 
\Mod_{\Sp_G}(\underline{R})\to  \Mod_{C^\ast(BG;R)}
  \]
  be its right adjoint. Then $f_*(\underline{R})\simeq C^*(BG;R)$. 
\end{recollection}

\begin{recollection}\label{rec-compatibility-cochains}
    Let $H\leq G$ be a subgroup. By \cite[Lemma 3.8]{BCHNP},  the restriction functor $\textrm{res}\colon \Mod_{\mathrm{Sp}_G}(\underline{R}) \to \Mod_{\mathrm{Sp}_H}(\underline{R})$ is a finite étale tt-functor with corresponding separable commutative algebra given
by $\mathbb{D}(G/H_+)\otimes \underline{R}$. We also have, a commutative diagram of left adjoint functors 
\begin{center}
    \begin{tikzcd}
       \Mod_{C^\ast(BG;R)} \ar[d,"f^\ast"] \ar[r,"\mathrm{Ind}^H_G"] & \Mod_{C^\ast(BH;R)} \ar[d,"f^\ast"]\\
        \Mod_{\mathrm{Sp}_G}(\underline{R}) \ar[r,"\res"] & \Mod_{\mathrm{Sp}_H}(\underline{R}).
    \end{tikzcd}
\end{center}
It follows that the right adjoints from the bottom right to the top left also commute. Thus we get that
\[f_*(\mathbb{D}(G/H_+)\otimes \underline{R})=\mathrm{Res}^H_G(C^\ast(BH;R)).\]
\end{recollection}

Let us record the following  observation. 

\begin{proposition}\label{prop:mod cochains as localizing}
  Let $G$ be a finite group and $R$ a commutative ring spectrum. Suppose $\mathcal{F}$ is a family of subgroups of $G$ containing the derived defect base $\mathcal{F}(\underline{R})$ of $\underline{R}$. Then 
    \[
    C^\ast(BG;R)\in\mathrm{Thick}\left(\prod_{H\in \mathcal{F}}\mathrm{Res}^H_G(C^\ast(BH;R))\right).
    \]
  In particular, the category $\Mod_{C^\ast(BG;R)}$ is generated, as a localizing subcategory, by
    \[
    \prod_{H\in \mathcal{F}}\mathrm{Res}^H_G(C^\ast(BH;R)).
    \]
\end{proposition}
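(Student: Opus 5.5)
We will transport the statement from the equivariant side to $\Mod_{C^\ast(BG;R)}$ along the right adjoint $f_\ast$ of \cref{rec-Morita theory for R_G}.

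\emph{Step 1: reduce to $\mathcal{F}(\underline{R})$.} Since $\mathcal{F}$ contains the derived defect base $\mathcal{F}(\underline{R})$, the algebra $A_{\mathcal{F}(\underline{R})}$ is a retract (a product factor) of $A_\mathcal{F}$. Hence $\underline{R}$ is $\mathcal{F}$-nilpotent: it lies in the thick $\otimes$-ideal of $\Sp_G$ generated by $A_\mathcal{F}$.

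\emph{Step 2: pass to $\underline{R}$-modules.} Tensoring the thick ideal statement with $\underline{R}$ puts $\underline{R}$ in the thick $\otimes$-ideal of $\Mod_{\Sp_G}(\underline{R})$ generated by $\underline{R}\otimes A_\mathcal{F}$. We then upgrade this to an honest thick subcategory statement,
\[
\underline{R}\in \mathrm{Thick}\bigl(M\otimes \underline{R}\otimes A_\mathcal{F} \mid M\bigr),
\]
with $M$ running over a set of objects. The standard trick for this is the one in Mathew–Naumann–Noel. Nilpotence gives an $N$ such that the $N$-fold tensor power of the fiber $I\to\underline{R}\to \underline{R}\otimes A_\mathcal{F}$ is $\otimes$-nilpotent on $\underline{R}$. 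The Adams-type tower then exhibits $\underline{R}$ as a retract of a finite iterated extension of objects of the form $I^{\otimes j}\otimes \underline{R}\otimes A_\mathcal{F}$.

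This is where the main obstacle sits: we must control the extra tensor factors $I^{\otimes j}$ so that applying $f_\ast$ lands inside $\mathrm{Thick}(\prod_H \mathrm{Res}^H_G C^\ast(BH;R))$. To remove them, I would use that $A_\mathcal{F}$ is a commutative algebra whose modules split off. Concretely, $X\otimes A_\mathcal{F}$ is a retract of $X\otimes A_\mathcal{F}\otimes A_\mathcal{F}$, and $A_\mathcal{F}\otimes A_\mathcal{F}$ decomposes via the double coset formula into products of $\mathbb{D}(G/K_+)$ with $K$ subconjugate to members of $\mathcal{F}$. Since $\mathcal{F}$ is a family, it is closed under subconjugation, so these $K$ lie in $\mathcal{F}$. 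It then suffices to show that $I^{\otimes j}\otimes\underline{R}\otimes\mathbb{D}(G/K_+)$ lies in $\mathrm{Thick}(\underline{R}\otimes\mathbb{D}(G/K'_+)\mid K'\in\mathcal F)$. Because $I$ is built from $\underline{R}$ and $\underline{R}\otimes A_\mathcal F$ by one cofiber sequence, induction on $j$ reduces this to the case of $\underline{R}\otimes A_\mathcal{F}\otimes\mathbb{D}(G/K_+)$. That case is handled by the same double coset decomposition, using again that $\mathbb{D}(G/K_+)\simeq G/K_+$ is dualizable with the relevant tensor products being finite sums.

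\emph{Step 3: apply $f_\ast$.} The functor $f_\ast$ is exact and preserves retracts, so it carries thick subcategories into thick subcategories. By \cref{rec-Morita theory for R_G} we have $f_\ast\underline{R}\simeq C^\ast(BG;R)$, and by \cref{rec-compatibility-cochains} we have $f_\ast(\underline{R}\otimes\mathbb{D}(G/H_+))\simeq \mathrm{Res}^H_G C^\ast(BH;R)$. The finite product over $\mathcal{F}$ commutes with $f_\ast$, and each factor is a retract of the product. This gives the first claim.

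The ``in particular'' then follows because $\Mod_{C^\ast(BG;R)}=\mathrm{Loc}(C^\ast(BG;R))$ and thick subcategories are contained in localizing ones. Alternatively, one can apply $f_\ast$ directly to \eqref{eq-loc-ddefectbase}: $f_\ast$ is colimit preserving since the unit $\underline{R}$ is compact, and it is essentially surjective onto $\Mod_{C^\ast(BG;R)}$ since $f_\ast f^\ast\simeq\mathrm{id}$.
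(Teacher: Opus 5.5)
Your proof is correct. Step 3 is exactly the paper's last step, but your Step 2 takes a genuinely different route.

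The paper does not unpack nilpotence at all. It quotes \cite[Proposition 3.26]{BCHNP} to turn $\mathcal{F}$-nilpotence of $\underline{R}$ into the compact generation statement $\Mod_{\Sp_G}(\underline{R})=\mathrm{Loc}(\underline{R}\otimes A_\mathcal{F})$. It then applies Neeman's lemma \cite[Lemma 2.2]{Nee92b}: the compacts of a category generated by a set of compact objects are the thick closure of that set. Since $\underline{R}$ is compact, this gives $\underline{R}\in\mathrm{Thick}(\underline{R}\otimes A_\mathcal{F})$ in one stroke.

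You instead convert thick $\otimes$-ideal membership into thick-subcategory membership by hand. The exponent-of-nilpotence criterion and the Adams tower exhibit $\underline{R}$ as a retract of an extension of the $\underline{R}\otimes I^{\otimes j}\otimes A_\mathcal{F}$, $j<N$. The double coset formula then absorbs the factors $I^{\otimes j}$.

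What each buys:
\begin{itemize}
\item The paper's route is two lines, given the citations.
\item Yours is self-contained. More relevant here, it bounds the number of cones needed to build $C^\ast(BG;R)$ by a quantity depending only on the nilpotence exponent $N$, since $f_\ast$ preserves levels. Neeman's lemma gives no such bound. This uniform control is exactly what condition (b) in the definition of a $G$-finite derived defect base asks for.
\end{itemize}

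You can sharpen your induction. Because $A_\mathcal{F}\to A_\mathcal{F}\otimes A_\mathcal{F}$ is split by the multiplication, the map $A_\mathcal{F}\otimes I\to A_\mathcal{F}$ is null. Hence $A_\mathcal{F}\otimes I$ is a retract of $\Sigma^{-1}A_\mathcal{F}\otimes A_\mathcal{F}$. So each $\underline{R}\otimes I^{\otimes j}\otimes A_\mathcal{F}$ is already a retract of a finite sum of shifts of objects $\underline{R}\otimes\mathbb{D}(G/K_+)$ with $K\in\mathcal{F}$, and the bound becomes exactly $N$. This is essentially what your remark that $X\otimes A_\mathcal{F}$ is a retract of $X\otimes A_\mathcal{F}\otimes A_\mathcal{F}$ was reaching for.

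A minor point on hypotheses. You enlarge to $\mathcal{F}$ in Step 1 before running Step 2, so you need $\mathcal{F}$ itself to be closed under subconjugation. That is justified by the MNN meaning of ``family''. Alternatively, run Step 2 with $\mathcal{F}(\underline{R})$, which is a family by definition, and enlarge only at the end. The product over $\mathcal{F}(\underline{R})$ is a retract of the product over $\mathcal{F}$, so then no condition on $\mathcal{F}$ beyond containing the defect base is needed.
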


\begin{proof}
  By assumption, $\underline{R}$ is $\mathcal{F}$-nilpotent, and hence, by \cref{eq-loc-ddefectbase}, $\Mod_{\Sp_G}(\underline{R})$ is compactly generated by $\underline{R}\otimes A_\mathcal{F}$. By \cite[Lemma 2.2]{Nee92b}, we have
   \[
   \Mod_{\Sp_G}(\underline{R})^\omega=\mathrm{Loc}(\underline{R}\otimes A_\mathcal{F})\cap \Mod_{\Sp_G}(\underline{R})^\omega= \mathrm{Thick}(\underline{R}\otimes A_\mathcal{F})
   \]
   and hence we deduce that $\underline{R}$ must lie in $\mathrm{Thick}(\underline{R}\otimes A_\mathcal{F})$. 
   
   Now, let $f_\ast\colon \Mod_{\Sp_G}(\underline{R})\to  \Mod_{C^\ast(BG;R)}$
   be the right adjoint of the fully faithful tt-functor $f^\ast\colon\Mod_{C^\ast(BG;R)} \hookrightarrow \Mod_{\Sp_G}(\underline{R})$ from \cref{rec-Morita theory for R_G}. By the previous observation, we obtain
   \[
   C^\ast(BG;R)\simeq f_\ast(\underline{R})\in \mathrm{Thick}(f_\ast(\underline{R}\otimes A_\mathcal{F})).
   \]
   The result follows from the equivalence $f_\ast(R\otimes A_\mathcal{F})\simeq\prod_{H\in \mathcal{F}}\mathrm{Res}^G_H(C^\ast(BH;R))$; see \cref{rec-compatibility-cochains}. The second claim is an immediate consequence.
\end{proof}

\begin{notation}
Let $G$ be a discrete group. We write $\Fin(G)$ to denote the family of finite subgroups of $G$.   
\end{notation}

\begin{definition}\label{def-g-finite-derived}
    Let $G$ be a discrete group and  let $R$ be a commutative ring spectrum. For each $H\in \Fin(G)$, consider the derived defect base $\mathcal{F}(\underline{R}_H)$ of $\underline{R}_{H}\in \Sp_H$. We say that $R$ has a \textit{$G$-finite derived defect base} if the following two conditions hold.
    \begin{enumerate}
        \item The family \[
    \mathcal{F}(G,R)=\mathcal{F}(R)\coloneqq \bigcup_{H\in \Fin(G)}\mathcal{F}(\underline{R}_H)
    \]
    is finite  up to $G$-conjugacy.
    \item There exists $n>0$ such that, for any finite subgroup $H\leq G$, the object 
    \[
    \prod_{K\in \mathcal{F}(\underline{R}_H)}\mathrm{Res}^K_H(C^\ast(BK;R))\in \Mod_{C^*(BH;R)}
    \]
     generates $C^\ast(BH;R)$, in the sense of \cite{rouquier2008dimensions}, in at most $n$ steps.  
    \end{enumerate}
    In this case, \textit{a $G$-finite derived defect base} of $R$ is a set $\mathcal{F}$ of representatives of the $G$-conjugacy classes of $\mathcal{F}(G,R)$.
\end{definition}

\begin{remark}
Note that condition $(b)$ in the above definition makes sense in view of \cref{prop:mod cochains as localizing}.
\end{remark}

We conclude this section with some examples of rings and groups satisfying the previous definition. 

\begin{example}\label{ex-EG-finite}
    Let $G$ be a group with a finite model for $\underline{E}G$. Then any ring spectrum $R$ has a $G$-finite derived defect base. Indeed, this follows since there are only finitely many finite subgroups of $G$ up to conjugacy. See \cref{rk:finite model finite conjugacy classes}.
\end{example}

We need the following recollection. 

\begin{recollection}
    \label{rec-defect-finitegps}
     Let $G$ be a finite group and $k$ be a commutative Noetherian ring. By 
   \cite[Corollary 6.21]{MNN17}, there is a functor 
   \[
   \Mod_{\mathrm{Sp}_G}(\underline{k}) \to \mathrm{Fun}(BG,\Mod_k) \simeq \mathbf{D}(kG)
   \]
which is fully faithful on compact objects, and which sends the compact object $\underline{k}\otimes \mathbb D(G/H_+)$ to the permutation module $k(G/H)$. 
   
  On the other hand,  a result of Carlson \cite{Car00} shows that the trivial representation $k$ lies in the thick subcategory of $\mathbf{D}^b(kG)$ generated by  the algebra 
    \[\prod_{E\in \mathcal{E}(G)} k(G/E)\]
    where $\mathcal{E}(G)$ denotes the set of elementary abelian subgroups of $G$. See also  \cite{BK08} for the modular case. 

     Combining these results, we deduce that $\underline{k}$ belongs to the subcategory of $\Mod_{\mathrm{Sp}_G}(\underline{k})$ generated by the algebra 
     \[
     \underline{k}\otimes \prod_{E\in \mathcal{E}(G)} \mathbb D(G/E_+)=\underline{k}\otimes A_{\mathcal{E}},
     \]
      and hence the derived defect base of $\underline{k}$ must be contained in $\mathcal{E}(G)$. 
\end{recollection}

    \begin{proposition}
    \label{prop-discrete-ptora-defect}
Let $G$ be a locally finite Artinian group, and let $k$ be a field of characteristic $q$. Then $k$ has a $G$-finite derived defect base which is contained in the class of elementary abelian $q$-subgroups of $G$.
\end{proposition}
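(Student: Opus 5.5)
We have to check the two conditions of \cref{def-g-finite-derived} for $R=k$ a field of characteristic $q$.

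\emph{Condition (a).} Let $H\leq G$ be finite. By \cref{rec-defect-finitegps}, the derived defect base $\mathcal{F}(\underline{k}_H)$ is contained in $\mathcal{E}(H)$, the elementary abelian subgroups of $H$. Since $k$ is a field of characteristic $q$, I expect it can be shrunk to elementary abelian $q$-subgroups. If $q=0$, $|H|$ is invertible in $k$, so $k$ is a retract of $k(H/1)$ and the defect base is $\{1\}$. If $q>0$, use the modular form of Carlson's theorem (\cite{BK08}): $k\in\mathrm{Thick}(\prod_{E}k(H/E))$ with $E$ running over elementary abelian $q$-subgroups. Transport this through the functor of \cite[Corollary 6.21]{MNN17}, which is fully faithful on compacts, exactly as in \cref{rec-defect-finitegps}. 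Hence $\mathcal{F}(G,k)\subseteq\mathcal{E}_q(G)$. By \cref{rec: structure l.f. Artinian group}, $G$ has finitely many conjugacy classes of elementary abelian subgroups, so (a) holds.

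\emph{Condition (b).} This is the main obstacle: we need a bound on the number of steps that is uniform over all finite $H\le G$. The plan is to get it from the $q$-rank. By \cref{rec: structure l.f. Artinian group}, the $q$-rank of $G$ is finite, say $r$, so every finite $H$ has $q$-rank at most $r$. I would use the quantitative form of Carlson's theorem. In \cite{Car00} and \cite{BK08}, $k$ is built from the permutation modules $k(H/E)$ in a number of steps bounded by a function of the $q$-rank. The mechanism is the Serre/Quillen product-of-Bocksteins argument, which gives a length depending only on the rank. One first reduces to a Sylow $q$-subgroup, using that $k$ is a summand of $k(H/P)$; this costs no extra steps. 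For the $q$-group one then argues by induction on the rank.

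Once $k$ is built from $\prod_E k(H/E)$ in at most $n=n(r)$ steps in $\mathbf{D}^b(kH)$, the same bound holds in $\Mod_{\mathrm{Sp}_H}(\underline{k})$. This is because the functor is fully faithful on compacts and exact, so it preserves cones, and the thick subcategory built from $\underline{k}\otimes A_{\mathcal{E}}$ is compact. Applying the exact functor $f_*$ from \cref{rec-Morita theory for R_G} preserves the number of steps. By \cref{rec-compatibility-cochains}, the result is that $C^*(BH;k)$ is built from $\prod_{K}\mathrm{Res}^K_H C^*(BK;k)$ in at most $n$ steps.

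One point needs care: here $K$ ranges over $\mathcal{E}_q(H)$ rather than over the minimal family $\mathcal{F}(\underline{k}_H)$. To close this gap I would either verify that Carlson's construction only uses subgroups in the defect base, or note that each $\mathrm{Res}^E_H C^*(BE;k)$ is built in a bounded number of steps from the restrictions from the defect base. For the second route, apply the finite-group statement to $E$ itself, using that the defect base of $\underline{k}_E$ is $\mathcal{F}(\underline{k}_H)$ restricted to $E$, together with a rank-bounded step count. Either way, the step count depends only on $r$, which gives (b).
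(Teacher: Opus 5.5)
Your treatment of condition (a) is fine and agrees with the paper's. The gap is in condition (b). You claim that \cite{Car00} and \cite{BK08} build $k$ from the $k(H/E)$ in a number of steps depending only on the $q$-rank. That is not what those proofs give, and your proposed induction on the rank does not go through.

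The Serre-type argument runs as follows. For a $q$-group $P$ that is not elementary abelian, there are nonzero $x_1,\dots,x_m\in H^1(P;\mathbb{F}_q)$ with $\beta(x_1)\cdots\beta(x_m)=0$. This builds $k$ in boundedly many steps from modules induced from the maximal subgroups $\ker(x_i)$. One must then recurse on those maximal subgroups, and the step counts multiply at each stage. Passing to a maximal subgroup need not lower the rank, so the recursion is really an induction on the order. Its depth is governed by the length of a subgroup chain down to the elementary abelian subgroups.

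Already $G=\mathbb{Z}/q^\infty$ shows this. All its finite subgroups $\mathbb{Z}/q^m$ have rank $1$, yet the recursion runs through $\mathbb{Z}/q^m\supset\mathbb{Z}/q^{m-1}\supset\cdots\supset\mathbb{Z}/q$. So this argument gives no bound uniform in $m$. For cyclic groups a direct computation does give a uniform bound, but that is an ad hoc fact, not a consequence of the rank.

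The missing idea is a uniform geometric input, which is what the paper uses. First, $G$ embeds in a single unitary group $U(N)$: induce the obvious faithful representation of the finite-index subgroup $\prod_p(\mathbb{Z}/p^\infty)^{r_p}$. Then every finite $K\le G$ acts on the same compact manifold $U(N)/S$, where $S$ is the $q$-torsion of a maximal torus, and all isotropy groups are elementary abelian $q$-groups. A $K$-CW filtration of $U(N)/S$ has length bounded by $\dim U(N)/S$, independently of $K$. It therefore builds $C^*((U(N)/S)_{hK};k)$ from products of the $C^*(BE;k)$ in boundedly many steps. Quillen's freeness of $H^*(BS;k)$ over $H^*(BU(N);k)$, together with an Eilenberg--Moore argument, makes $C^*(BK;k)$ a retract of this object; this is the argument of \cite{BG}. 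The uniform bound comes from the fixed ambient Lie group, not from the rank.

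Finally, the point you flag at the end, about $\mathcal{E}_q(H)$ versus $\mathcal{F}(\underline{k}_H)$, is harmless. For $q>0$, every elementary abelian $q$-subgroup $E\le H$ has $\Phi^E\underline{k}\neq 0$, so $E$ lies in $\mathcal{F}(\underline{k}_H)$. Hence $\mathcal{F}(\underline{k}_H)=\mathcal{E}_q(H)$ and no extra steps are needed.
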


\begin{proof}

First, recall that if $G$ is a locally finite Artinian group, then $G$ can be described as an extension of a finite group $P$ by subgroup $H=\prod_p(\mathbb Z/p^{\infty})^{r_p}$ where the product runs over a finite collection of primes $p$ with $r_p> 0$. In particular, we can write $G=\cup_{n\in \mathbb N}G_n$  for an ascending chain of finite subgroups, where each $G_n$ is an extension of $P$ by a finitely generated abelian group of $p$-rank $r_p$ at each $p$; see \cite{BLO07}.

    Now, in view of \cref{rec-defect-finitegps}, we have that $\mathcal{F}(G,k)$ is contained in $\mathcal{E}_q(G)$. Moreover, the latter is finite up to $G$-conjugacy by \cref{rec: structure l.f. Artinian group}. Thus, condition $(a)$ of \cref{def-g-finite-derived} holds.

    To verify condition $(b)$,  note that such a group $G$ admits a monomorphism into a unitary group: first consider the complex faithful representation of $H$  
    \[
    \mu\colon H=\prod_p(\mathbb Z/p^{\infty})^{r_p} \to U\left(\sum_p r_p\right)
    \]
    given by the sum of $\mathbb Z/p^{\infty}\hookrightarrow U(1)$ for each summand $\mathbb Z/p^\infty$ of $H$. Since $H$ is a subgroup of finite index we can apply induction, and the induced representation from $H$ to $G$ gives the desired monomorphism $\rho\colon G\hookrightarrow U(|P|\cdot(\sum r_p))$. In particular, the restriction to each finite subgroup $K\leq G$ produces faithful complex representations $\rho_K\colon K \hookrightarrow U(N)$ where $N=|P|\cdot(\sum r_p)$.
    
    The goal is to show that, the object $C^\ast(BK;k)$ is generated in a number of steps by
     \[
     \prod_{E\in \mathcal{E}_q(K)}C^\ast(BE;k)
     \]
      bounded by some $N>0$ independent of $K$. We claim that $N=|P|\cdot(\sum r_p)$ suffices.

     Indeed, we follow the strategy of \cite[Theorem 3.1]{BG}. Let $K\in \Fin(G)$. We use the previous embeddings $\rho_K\colon K\hookrightarrow U(N)$. Let $T$ denote a maximal torus of $U(N)$, and let $S$ denote the subgroup of elements of order dividing $q$ in $T$, which is a maximal elementary abelian $q$-subgroup of $U(N)$. Then $U(N)/S$ is a finite $K$-complex whose isotropy groups are all elementary abelian $q$-subgroups. The $K$-complex $U(N)/S$ is built from finitely many cells of the form $K/E$, and the argument of \cite[Theorem 3.1]{BG} shows that the gluing of these cells gives a recipe for constructing $C^\ast((U(N)/S)_{hK};k)$ from the corresponding $C^\ast(BE;k)$. The number of steps in this construction depends only on the cellular structure of $U(N)/S$ as a $K$-complex, and hence is bounded in terms of its dimension. Finally, since $H^*(BS;k)$ is a free $H^*(BU(N);k)$-module, an Eilenberg-Moore spectral sequence argument shows that the same holds for the pullback along $B\rho_n$. Thus $H^*((U(N)/S)_{hK};k)$ is a free $H^*(BK;k)$-module. As a consequence, we have that $C^*(BK;k)$ is a retract of $C^\ast((U(N)/S)_{hK};k)$. Although the cellular structure may vary with $K$, its dimension $N$ does not. Thus, the number of steps required to generate $C^\ast(BK;k)$ is uniformly bounded as $K\in \Fin(G)$ varies, proving condition $(b)$ and hence the result. 
\end{proof}

\begin{remark} 
  We note that the preceding strategy for proving condition $(b)$ of \cref{def-g-finite-derived} generalizes to commutative ring spectra $E$ representing complex-oriented cohomology theories. We follow \cite{HKR}. Indeed, given a monomorphism $\rho\colon G\rightarrow U(N)$ with $G$ finite, consider $U(N)/T$, where $T$ is a maximal torus. By \cite[Proposition 2.6(1)]{HKR}, $E^\ast((U(N)/T)_{hG})$ is a free $E^\ast(BG)$-module. Moreover, $C^\ast(BG;E)$ is a retract of $C^\ast((U(N)/T)_{hG};E)$, which is built in finitely many steps from cochains over abelian subgroups of $G$. In particular, the dimension of $U(N)/T$ again provides a bound for the number of steps required when considering the family of abelian subgroups. See also \cite[Theorem 1.5, Remark 5.15]{MNN17}.
\end{remark}

\begin{remark}
On the other hand, there is a broader class of groups for which the collection of elementary abelian subgroups is finite up to conjugacy, including groups of type $FP_\infty$ (see \cref{thm-benson-abelem}) and locally finite Artinian groups (see \cref{rec: structure l.f. Artinian group}). It is natural to ask whether commutative Noetherian rings admit a $G$-finite derived defect base with respect to such a class of groups. 
\end{remark}


\section{Chouinard theorem for groups with finite-dimensional \texorpdfstring{$\underline{E}G$}{underline{E}G}}\label{sec-chouinard}

We now generalize, by different methods, some of the results developed in \cite[Section 6]{Conservative}. We show that groups admitting a finite-dimensional stable model for the classifying space for proper actions satisfy Chouinard's condition, provided that the ground ring spectrum has a finite derived defect base. In fact, we prove a stronger statement: a suitable induction functor is weakly descendable.

A key input is the observation that, whenever the group admits a finite-dimen\-sio\-nal model for $\underline{E}G$, the cochains $C^\ast(BG;R)$ can be reconstructed from the cochains $C^\ast(BH;R)$, where $H$ ranges over a suitable family of finite subgroups of $G$. To illustrate the main ideas, we begin with an example that served as the starting point for our previous work.

\subsection{Groups acting on trees}

Let $G$ be group acting on a tree $T$ with finite isotropy (equivalently, the fundamental group of a graph of finite groups). The edges of $T$ are oriented, so we have a source $\iota(e)$ and target $\tau(e)$ of an edge $e$ in $T$.  Let $Y$ denote a fundamental domain for the action, and write $Y_0$ to denote a maximal subtree of $Y$. In particular, $Y_0$ and $Y$ have the same vertices, but an edge in $Y\backslash Y_0$ has only its source vertex in $Y_0$, while an edge in $ Y_0$ has both its source and target vertices in $Y_0$. 

Now, for each vertex $v$ in $T$, we write $\bar v$ to denote the unique vertex in $Y$ in the same $G$--orbit as $v$. We choose an element $t_v\in G$ such that $t_v \bar v=v$, and let $t_v=1$ in case that $\bar v=v$. This will give us group homomorphisms between isotropies as follows. For a simplex $\sigma$ of $T$, write $G_\sigma$ to denote the isotropy group of $\sigma$. In particular,  we have $G_{\iota(e)}\cap G_{\tau(e)}=G_e$. Hence for any edge $e$ in $Y$, we obtain two maps $f_{\iota(e)}\colon G_e \to G_{\iota(e)}$ given by the inclusion and  $f_{\tau(e)}\colon G_e \to G_{\overline{\tau(e)}}$ given by $g\mapsto t_{\tau(e)}^{-1}gt_{\tau(e)}$.

\begin{recollection}
The augmented chain complex of the tree $T$ with coefficients in $k$ is an exact complex of $kG$--modules of the form 
\[
0\to \bigoplus_{e\in EY} \mathrm{Ind}_{G_e}^G(k) \xrightarrow[]{\tau^\ast-\iota^\ast} \bigoplus_{v\in VY} \mathrm{Ind}_{G_v}^G(k) \to k \to 0
\]
where $VY$ and $EY$ denote the set of vertices and edges of $Y$, respectively. Here the maps $\iota^\ast$ and $\tau^\ast$ are induced by the morphisms $f_{\iota(e)}$ and $f_{\tau(e)}$, respectively.  Using this complex and Frobenius reciprocity, we obtain a long exact sequence on cohomology groups:
\begin{equation}\label{long seq of cohomo}
0\to H^{0}(G;k)\to \prod_{v\in VY } H^0(G_v;k) \to \prod_{e\in EY } H^0(G_e;k) \to H^1(G;k) \to \ldots  
\end{equation}
In fact, this sequence can be interpreted as the collapsing of the Bredon cohomology spectral sequence  which computes the $G$-equivariant cohomology of $T$ (c.f. \cite[Chapter VII]{Bro82}). 
\end{recollection}

\begin{recollection}
 Note that for any edge $e$ of $Y$, we have a commutative diagram 
 \begin{center}
     \begin{tikzcd}
            C^\ast(BG;k)  \arrow[r] \arrow[d] & C^\ast(BG_{\tau(e)};k) \arrow[d] \\
            C^\ast(BG_{\iota(e)};k) \arrow[r] & C^\ast(BG_e;k)
     \end{tikzcd}
 \end{center}
 induced by the group morphisms $G_e\xrightarrow[]{f_\iota(e)}G_{\iota(e)}\to G$ and $G_e\xrightarrow[]{f_\tau(e)}G_{\iota(e)}\to G$. Then we get a comparison map 
\begin{equation}\label{fiber for gps-trees}
     C^\ast(BG;k)\xrightarrow[]{\varphi}\mathrm{fib}\left( \prod_{v\in VY}C^\ast(BG_v;k)\xrightarrow[]{\iota^\ast -\tau^\ast} \prod_{e\in EY}C^\ast(BG_e;k)\right)
 \end{equation}
 where $\iota^\ast$ is induced by the $f_{\iota(e)}$ and $\tau^\ast$ is induced by the $f_{\tau(e)}$. 
\end{recollection}

\begin{proposition}
    The comparison map $\varphi$ from  \cref{fiber for gps-trees}
    is a homotopy equivalence. 
\end{proposition}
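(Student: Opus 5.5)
The plan is to realize $\varphi$ as the map obtained by applying the contravariant functor $F(\Sigma^\infty_+(-),Hk)$ to a homotopy pushout of spaces, so that the fiber square becomes the image of that pushout. The tree $T$ is contractible, so $EG\times T\simeq EG$ and hence $BG\simeq EG\times_G T = T_{hG}$. The $G$-CW structure of $T$ gives a $G$-pushout
\[
\coprod_{e\in EY} G/G_e\times \partial\Delta^1 \longrightarrow \coprod_{v\in VY} G/G_v, \qquad \coprod_{e\in EY} G/G_e\times \Delta^1\longrightarrow T,
\]
where the two ends of the edge $e$ attach to the orbits of $\iota(e)$ and $\tau(e)$. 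Homotopy orbits preserve homotopy pushouts. Using $(G/H)_{hG}\simeq BH$, we obtain that $BG\simeq T_{hG}$ is the homotopy coequalizer (double mapping cylinder) of the two maps
\[
\coprod_{e\in EY} BG_e\rightrightarrows \coprod_{v\in VY} BG_v,
\]
and these maps are induced by $f_{\iota(e)}$ and $f_{\tau(e)}$. For $\tau$ the map $G/G_e\to G/G_{\tau(e)}$ is identified with $G/G_e\to G/G_{\overline{\tau(e)}}$, $gG_e\mapsto g t_{\tau(e)}G_{\overline{\tau(e)}}$. On homotopy orbits this is $B$ of conjugation by $t_{\tau(e)}^{-1}$, which is exactly $f_{\tau(e)}$ (conjugation induces maps homotopic to $B$ of the homomorphism, compatibly with $BG$).

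Next I apply $C^\ast(-;k)=F(\Sigma^\infty_+(-),Hk)$. It sends coproducts to products and homotopy colimits to homotopy limits. The homotopy coequalizer therefore becomes the equalizer
\[
\prod_v C^\ast(BG_v;k)\rightrightarrows \prod_e C^\ast(BG_e;k).
\]
In a stable setting this equalizer is the fiber of the difference $\iota^\ast-\tau^\ast$. I then check that the resulting identification of $C^\ast(BG;k)$ with this fiber is the comparison map $\varphi$. This follows because $\varphi$ is built from the very restriction maps appearing in the commutative squares used to define it.

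As a sanity check and an alternative route, one can instead argue on homotopy groups. The long exact sequence of the fiber sequence agrees, via Frobenius reciprocity/Shapiro, with \eqref{long seq of cohomo}, and $\varphi$ induces the identity-compatible maps there. A five-lemma argument then shows $\pi_\ast\varphi$ is an isomorphism. This route requires knowing that \eqref{long seq of cohomo} is exact in all degrees, which is the collapse of the Bredon spectral sequence for the $1$-dimensional $T$.

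The main obstacle is bookkeeping rather than mathematics. The fundamental domain $Y$ is not a subcomplex closed under taking faces, since edges of $Y\setminus Y_0$ have their target outside $Y$. One must therefore carefully identify the $G$-cell structure and the twisting by $t_{\tau(e)}$. One must also verify that the homotopy commutativity data for the squares (needed to define $\varphi$ as a map into the fiber, not just on homotopy) is the data coming from the pushout. I would handle this by defining $\varphi$ directly as $C^\ast$ of the map from the double mapping cylinder to $T_{hG}\simeq BG$, which makes the needed homotopies canonical.
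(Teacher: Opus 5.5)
Your proof is correct, but your main route differs from the paper's. The paper works entirely on homotopy groups. It writes down the long exact sequence of $\pi_\ast$ for the fiber and compares it with the long exact sequence \eqref{long seq of cohomo} coming from the augmented chain complex of $T$. It asserts, by comparing the Bredon and Bousfield--Kan spectral sequences, that $\varphi$ induces a map between these two sequences, and concludes by the five lemma. That is essentially your ``alternative route.''

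Your main route works at the space level instead:
\begin{itemize}
\item $BG\simeq T_{hG}$.
\item The one-dimensional $G$-CW structure of $T$ exhibits $T_{hG}$ as the homotopy coequalizer of $\coprod_e BG_e\rightrightarrows\coprod_v BG_v$. Your identification of the $\tau$-leg with $Bf_{\tau(e)}$ via $gG_e\mapsto g\,t_{\tau(e)}G_{\overline{\tau(e)}}$ is right.
\item $F(\Sigma^\infty_+(-),Hk)$ turns this into an equalizer, which in a stable setting is the fiber of the difference map.
\end{itemize}

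What your route buys:
\begin{itemize}
\item It avoids the delicate point of the paper's argument, namely that $\pi_\ast\varphi$ commutes with the connecting homomorphisms, which the paper only sketches by analogy with another proof.
\item It pins down $\varphi$ itself, including the nullhomotopy needed to map into the fiber, which the paper leaves implicit.
\item It works verbatim for any coefficient spectrum $R$.
\item It is exactly the one-dimensional case of the Borel-construction argument the paper later uses in \cref{prop:findim model}.
\end{itemize}
The paper's route, in turn, ties the statement directly to the classical Mayer--Vietoris-type sequence in group cohomology.

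One small correction to your alternative: exactness of \eqref{long seq of cohomo} in all degrees needs no spectral sequence collapse. It is just the long exact $\mathrm{Ext}^\ast_{kG}(-,k)$ sequence of the short exact augmented chain complex of the tree, combined with Shapiro's lemma. The real content of that route is the compatibility of $\pi_\ast\varphi$ with the connecting maps.
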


\begin{proof}
    Let $M$ denote the right hand side. We will show that $\varphi$ induces an isomorphism in homotopy groups. For this, we use the long exact sequence of homotopy groups associated to a fiber sequence.  In particular, we get that the homotopy groups of $M$ fit in a long exact sequence: 
    \begin{equation}\label{long seq of homotopy groups}
\ldots \to \pi_i M \to \prod_{v\in VY} \pi_i C^\ast(BG_v;k) \to \prod_{e\in VY } \pi_iC^\ast(BG_e;k) \to \ldots  
\end{equation} 
Recall that $\pi_{-i}(C^\ast(BG;k))\cong H^i(G;k)$. In fact, we can interpret this long exact sequence as the collapsing of the Bousfield-Kan spectral sequence for homotopy limits \cite[Section 1.2.2]{Lur17}. Following the same strategy as in the proof of \cite[Theorem 3.1]{gomez2024picard}, we can use the map $\varphi$ to compare the Bredon cohomology spectral sequence with the Bousfield-Kan spectral sequence, and verify that this map indeed is compatible with the filtrations. In other words, the map $\varphi$ induces a map between the long exact sequences from  \cref{long seq of cohomo} and  \cref{long seq of homotopy groups}. We conclude the result by an application of the Five Lemma. 
\end{proof}

\begin{remark}
    The previous result applies, for instance, to locally finite Artinian groups, and recovers \cite[Proposition 6.7]{Conservative}.
\end{remark}

In the following subsection, we will establish an analogous result for groups that admit a \textit{stable} finite-dimensional model for $\underline{E}G$. In particular, we emphasize the role of such models in reconstructing $C^\ast(BG;k)$ from the cochains $C^\ast(BH;k)$, where $H$ ranges over a suitable family of finite subgroups of $G$.

\subsection{Groups with finite-dimensional $\underline{E}G$}

Let $G$ be a discrete group, and write $\Fin(G)$ to denote the family of finite subgroups of $G$.   

\begin{recollection}
   For a discrete group $G$, we denote by $\mathrm{Sp}_G$ the stable $\infty$-category of proper $G$-spectra in the sense of \cite{degrijse2023proper} and \cite{fausk2008equivariant}. A model for $\mathrm{Sp}_G$ is obtained from a stable model category structure on the category of orthogonal $G$-spectra, in which the weak equivalences are those morphisms that induce stable equivalences on derived fixed points for all finite subgroups. We refer to the above references for further details on this category.
\end{recollection}

\begin{definition}
     Following \cite[Section 3]{BDP17}, a \textit{stable model for $\underline{E}G$ }consists of a collection of $G$-spectra $\{X^n\}_{n\in \mathbb Z}$ with  $X^n=\{*\}$ if $n<0$ and cofiber sequences 
    \begin{equation}\label{homotopy cofiber of inclusion}
  X^n\to X^{n+1} \to \bigvee_{i\in I_n} \Sigma^{n+1} (G/H_i)_+ \to \Sigma X^n
\end{equation}
 where $H_i\in \Fin(G)$, and 
 \begin{equation}\label{stable replacement for S0}
     S^0\simeq \varinjlim_{n} X^n
 \end{equation} in the homotopy category of  $\mathrm{Sp}_G$. Moreover, we say that a stable model for $\underline{E}G$ is \textit{finite-dimensional} if there exist $d\geq0$ such that $X^{n}\xrightarrow[]{\simeq}X^{n+1}$ for all $n\geq d$. The smallest $d$ satisfying the previous conditions is called the \textit{dimension} of the stable model. We say that the stable model is \textit{finite} if it is finite-dimensional and $I_n$ is finite for all $n\geq0$. 
\end{definition}

\begin{recollection}\label{rec-G-Topfin}
    Let $G\textrm{-}\mathrm{Top}^{\Fin(G)}_\ast$ denote the category of compactly generated weak Hausdorff spaces with continuous $G$-action and $G$-fixed base point together with $G$-equivariant based continuous maps. We will consider this category equipped with the model structure given by weak equivalences and fibrations those induced on $H$-fixed points, for all $H$ in $\Fin(G)$. For a $G$-space $X$, let $X_+$ denote the space obtained by attaching a disjoint $G$-fixed base-point. Moreover,  there is a suspension spectrum functor $\Sigma^\infty\colon G\textrm{-}\mathrm{Top}^{\Fin(G)}_\ast\to \mathrm{Sp}_G $
which is a left Quillen functor. 
\end{recollection}

The relevance of the suspension spectrum functor is that it allows to construct stable models for $\underline{E}G$ from unstable ones.

 \begin{recollection}
 Let $X$ be a unstable model for  $\underline{E}G$. Its  $G$-CW-decomposition $\varnothing \subset X_0 \subset X_1 \subset \cdots$ gives an stable model by considering $X^n=\Sigma^\infty (X_n)_+$. Indeed, by definition of $\underline{E}G$,  we have 
    \begin{equation}\label{replacement for S0}
        (G/G)_+=S^0\simeq \varinjlim_{n} \; (X_n)_+
    \end{equation}
    in the homotopy category of $G\textrm{-}\mathrm{Top}^{\Fin(G)}_\ast$, where $X_n=\varnothing$ if $n<0$, and there are inclusions of $G$-spaces $(X_n)_+\to (X_{n+1})_+$ which preserve the base-point and fit into a cofiber sequence 
\begin{equation}\label{homotopy cofiber of inclusion +}
  X_+^n\to X_+^{n+1} \to \bigvee_{i\in I_n} \Sigma^{n+1} (G/H_i)_+   
\end{equation}
    with $H_i\in \Fin(G)$, for any $i\in I_n$. Here $\Sigma^n$ denotes the smash product with $S^n$. Moreover, if $X$ has dimension $m<\infty$, we can consider $X^{m}=X^n$ since $X_n\xrightarrow{\simeq} X_{n+1}$, for all $n\geq m$. Since the maps in  \cref{replacement for S0} and \cref{homotopy cofiber of inclusion +} are weak-equivalences of based $G$-CW-complexes, we can translate the analogous statements to the $\infty$-category $\mathrm{Sp}_G$ via the suspension spectrum functor. Moreover, if $X$ is a (finite-dimensional) model for $\underline{E}G$, then $\Sigma^\infty X_+$ is a stable (finite-dimensional) for $\underline{E}G$. Explicitly, the following properties hold:
\begin{enumerate}
\item If $X$ has dimension $d<\infty$, then $\Sigma^\infty X_+$ also has dimension $d$;
\item If $X$ is of finite type, then so is $\Sigma^\infty X_+$.
\end{enumerate}
See \cite[Section 3]{BDP17} for further details.
\end{recollection}

We refer the reader to \cref{sec-classifying} for some examples of groups admitting unstable finite-dimensional models for $\underline{E}G$, and hence  finite-dimensional stable models for $\underline{E}G$.

\begin{proposition}\label{prop:findim model}
Let $G$ be a discrete group with a {stable} finite-dimensional model $X$ for $\underline{E}G$, and $R$ be a commutative ring spectrum. Let 
\[ Y_n= \prod_{H_i \in \Fin(G), \,  i\in I_n} C^\ast(BH_i;R)\in \Mod_{C^\ast(BG;R)}\]
where the $C^\ast(BG;R)$--module structure on $C^\ast(BH_i;R)$ is the one induced by the  inclusion $H_i\subseteq G$, $I_n$ is the set appearing in  \cref{homotopy cofiber of inclusion}, and $H_i$ is the isotropy group of the cell corresponding to $i$. Then $C^\ast(BG;R)$ is in the thick subcategory of $\mathrm{Mod}_{C^\ast(BG;R)}$ generated by the $Y_n$, for $n\in \mathbb{N}$. In symbols,
 \[ C^\ast(BG;R) \in \mathrm{Thick}\left( Y_n\mid n\in \mathbb{N} \right). \] 
\end{proposition}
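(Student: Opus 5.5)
The plan is to apply a suitable "Borel cochains" functor to the stable model $X$ and read off the desired filtration. Consider the exact functor
\[
\Phi\colon \mathrm{Sp}_G^{\mathrm{op}}\to \Mod_{C^\ast(BG;R)},\qquad Z\mapsto F_G\big(Z\otimes \Sigma^\infty_+EG,\; R\big),
\]
that is, $R$-valued cochains on the Borel construction $(Z\wedge EG_+)_{hG}$. Equivalently, $\Phi(Z)$ is the mapping spectrum from $Z$ into the $G$-spectrum $F(EG_+,R)$ with trivial action on $R$. The $C^\ast(BG;R)$-module structure comes from the diagonal/cup product, since $\Phi(S^0)=C^\ast(BG;R)$. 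Because $\Phi$ is built from an internal mapping object, it turns colimits in $\mathrm{Sp}_G$ into limits and cofiber sequences into fiber sequences.

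There are two key computations. First, $\Phi(S^0)\simeq C^\ast(BG;R)$, since $(EG)_{hG}\simeq BG$. Second, for $H\in\Fin(G)$ we have $\Phi(\Sigma^{m}(G/H)_+)\simeq \Sigma^{-m}C^\ast(BH;R)$, because $(G/H\times EG)_{hG}\simeq EG/H\simeq BH$. The module structure here is restriction along $H\subseteq G$, as required, which we check by naturality in the map $G/H\to G/G$.

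Applying $\Phi$ to a wedge gives a product, so the cofiber sequences \cref{homotopy cofiber of inclusion} yield fiber sequences
\[
\Sigma^{-n-1}Y_n \to \Phi(X^{n+1})\to \Phi(X^n),
\]
with $\Phi(X^{n})=0$ for $n<0$. By induction on $n$, each $\Phi(X^n)$ lies in $\mathrm{Thick}(Y_0,\dots,Y_{n-1})$, and it is built in at most $n$ steps.

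Finite-dimensionality then finishes the argument. We have $X^n\simeq X^d$ for $n\ge d$, so \cref{stable replacement for S0} gives $S^0\simeq X^d$ in $\mathrm{Sp}_G$. Applying $\Phi$ gives $C^\ast(BG;R)\simeq \Phi(X^d)\in \mathrm{Thick}(Y_n\mid n\in\mathbb N)$. Note that no limit issue arises, precisely because the colimit stabilises. This is why the statement needs finite dimension but not finiteness: the $Y_n$ may be infinite products, which is allowed.

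The main obstacle is making $\Phi$ rigorous on proper $G$-spectra for discrete infinite $G$. We need that it is well defined on $\mathrm{Sp}_G$, meaning that it inverts the weak equivalences detected on $H$-fixed points for finite $H$. It must also be compatible with the suspension spectrum functor of \cref{rec-G-Topfin}, and carry a natural $C^\ast(BG;R)$-module structure. I would define it as the composite of restriction to the trivial family (the underlying spectrum with $G$-action, i.e. Borel completion) with $\mathrm{Fun}(BG,\Sp)\ni W\mapsto F(W,R)^{hG}$. Equivalences in $\mathrm{Sp}_G$ are in particular underlying equivalences, since the trivial subgroup is finite, so $\Phi$ is well defined. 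Then $F(S^0,R)^{hG}=C^\ast(BG;R)$, and $F(\Sigma^\infty_+G/H,R)^{hG}=F(R,R)^{hH}$-type identifications give $C^\ast(BH;R)$ via Shapiro's lemma. The module structures follow by the lax monoidality of $(-)^{hG}$.
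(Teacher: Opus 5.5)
Your proposal is correct and follows essentially the paper's route. You apply $R$-cochains on the Borel construction $EG_+\wedge_G(-)$ to the filtration $\{X^n\}$, identify the cofibers with shifts of $Y_n$ via $EG_+\wedge_G (G/H)_+\simeq BH_+$, induct along the resulting fiber sequences, and use finite-dimensionality so that no genuine limit occurs. Your definition of $\Phi$ through the underlying Borel $G$-spectrum and $(-)^{hG}$, which makes it well defined on proper $G$-spectra and $C^\ast(BG;R)$-linear, fills in a point the paper leaves implicit; the only fix needed is to re-index so the $n$-cells are indexed by $I_n$, as the paper's proof does, since read literally the $0$-cells sit in $I_{-1}$ and your base case $\Phi(X^0)=Y_{-1}$ falls outside $\mathrm{Thick}(Y_0,\dots,Y_{n-1})$.
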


\begin{proof}
    We write $BG_+$ to denote $\Sigma^\infty_+BG$ for short. From the pointed Borel construction applied to $S^0\simeq \varinjlim_{n} X^n$,  we obtain
    \begin{equation}\label{decomposition for EB+}
        BG_+\simeq \varinjlim_{n\in \mathbb{N}} EG_+\wedge_G X^n.
    \end{equation}
     Recall that $EG_+\wedge_G X^n\simeq S^0$ for $n<0$, and each of the induced maps  
     \[
     EG_+\wedge_GX^{n-1}\to EG_+\wedge_G X^{n}
     \]
     fits in a cofiber sequence 
 \begin{equation}\label{cofiber sequence for the borel construction}
     EG_+\wedge_G X_+^{n-1}\to EG_+\wedge_G X_+^{n} \to \bigvee_{i\in I_n} EG_+\wedge_G \Sigma^n (G/H_i)_+.
 \end{equation}
    Moreover, note that 
    \begin{equation}\label{identification of BH_+}
        EG_+\wedge_G  \Sigma^n(G/H_i)_+\simeq \Sigma^n{BH_i}_+
    \end{equation}
    Now, since $C^\ast(-,R)=F(-,R)$ commutes with colimits, we obtain a homotopy equivalence 
    \begin{equation}\label{C*BG as a limit}
        C^\ast(BG;R)\simeq \varprojlim_{n\in \mathbb{N}}\; C^\ast( EG_+\wedge_G X^n ;R).
    \end{equation}
 We claim that each $C^\ast( EG_+\wedge_G X^n ;R)$ is in $\mathrm{Thick}\left( Y_i\mid i\in \mathbb{N} \right)$. We proceed by induction on $n$. For $n=0$, this is clear since $C^\ast(EG_+\wedge_G X^0;R)\simeq Y_0$ using \cref{identification of BH_+}.  Assume the result for $n-1$, that is, $C^\ast( EG_+\wedge_G X^{n-1} ,R)$ is in $\mathrm{Thick}\left( Y_i\mid i\in \mathbb{N} \right)$. Using the cofiber sequence from \cref{cofiber sequence for the borel construction}, we obtain a fiber sequence 
\begin{equation}\label{Eq:fiber sequence for X^n}
    \Sigma^n Y_n \to C^\ast(EG_+\wedge_G X^n;R) \to C^\ast(EG_+\wedge_G X^{n-1};R)
\end{equation}
where we have identified the left hand side by means of \cref{identification of BH_+}. It follows that both extremes of the above fiber sequence are in $\mathrm{Thick}\left( Y_i\mid i\in \mathbb{N} \right)$, hence so is $C^\ast(EG_+\wedge_G X^n;R)$. Finally, since we are assuming that $X=\{X^n\}$ is a finite-dimensional stable model for $\underline{E}G$, the limit in \cref{C*BG as a limit} is finite. Hence, we conclude that $ C^\ast(BG;R)$ is in $\mathrm{Thick}\left( Y_n\mid n\in \mathbb{N} \right)$.
\end{proof}

We now have all the necessary ingredients to prove our main result.

\begin{theorem}\label{thm-weakly}
      Let $G$ be a discrete group with a stable finite-dimensional model $X$ for $\underline{E}G$, and $R$ be a commutative ring spectrum. Assume that $R$ has a \textit{$G$-finite derived defect base} $\mathcal{F}$. Then the geometric functor \[\mathrm{Ind}_\mathcal{F}\colon \Mod_{C^\ast(BG;R)}\xrightarrow[]{}\prod_{K\in \mathcal{F}}\Mod_{C^\ast(BK;R)}\] is weakly descendable.
\end{theorem}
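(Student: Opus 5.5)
We must show that the unit $C^\ast(BG;R)$ lies in $\mathrm{Locid}\bigl(\mathrm{Res}^K_G C^\ast(BK;R)\mid K\in\mathcal{F}\bigr)$, because the right adjoint of $\mathrm{Ind}_\mathcal{F}$ sends the unit $(C^\ast(BK;R))_K$ to the product of the $\mathrm{Res}^K_G C^\ast(BK;R)$. Since $\Mod_{C^\ast(BG;R)}=\mathrm{Loc}(C^\ast(BG;R))$, every localizing subcategory is an ideal. So we only need to show that the unit lies in the localizing subcategory $\mathcal{L}$ generated by these objects. The plan has two stages: first reduce from $G$ to its finite subgroups using the finite-dimensional stable model, then reduce from finite subgroups to the defect base using condition (b) of \cref{def-g-finite-derived}.

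\textbf{Stage one.} By \cref{prop:findim model}, $C^\ast(BG;R)$ lies in $\mathrm{Thick}(Y_n\mid n\le d)$, where $Y_n=\prod_{i\in I_n}\mathrm{Res}^{H_i}_G C^\ast(BH_i;R)$ and each $H_i$ is finite. A localizing subcategory is thick, so it suffices to show $Y_n\in\mathcal{L}$ for each $n$.

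\textbf{Stage two, the main obstacle.} The index sets $I_n$ may be infinite, so $Y_n$ is an infinite product. Localizing subcategories are closed under sums, not products, so membership of each factor is not enough; uniformity is needed here.

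Fix a finite $H=H_i$. By condition (b), $C^\ast(BH;R)$ is built in at most $n_0$ steps, with $n_0$ independent of $H$, from
\[
P_H=\prod_{K\in\mathcal{F}(\underline{R}_H)}\mathrm{Res}^K_H C^\ast(BK;R).
\]
Each $K$ here is $G$-conjugate to some $K'\in\mathcal{F}$, by an element $g$ with $K={}^gK'$. Conjugation by $g$ is inner on $G$, so it acts trivially on $BG$ up to homotopy. Hence $\mathrm{Res}^K_G C^\ast(BK;R)\simeq \mathrm{Res}^{K'}_G C^\ast(BK';R)$ as $C^\ast(BG;R)$-modules.

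Apply $\mathrm{Res}^H_G$, which is exact and preserves products. Then $\mathrm{Res}^H_G C^\ast(BH;R)$ is built in at most $n_0$ steps from a product of copies of the finitely many objects $M_{K'}=\mathrm{Res}^{K'}_G C^\ast(BK';R)$, $K'\in\mathcal{F}$.

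Products of cones are cones of products, and products of retracts are retracts. So taking the product over $i\in I_n$ of these uniformly bounded constructions shows that $Y_n$ is built in at most $n_0$ steps from products of copies of the $M_{K'}$. The uniform bound is exactly what makes this commutation legitimate.

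It remains to show that arbitrary products $\prod_J M_{K'}$ lie in $\mathcal{L}$. Write $\prod_J M_{K'}\simeq \mathrm{Res}^{K'}_G\bigl(\prod_J C^\ast(BK';R)\bigr)$. In $\Mod_{C^\ast(BK';R)}$, any object $N$ lies in $\mathrm{Loc}(C^\ast(BK';R))$, and $\mathrm{Res}^{K'}_G$ preserves colimits. Therefore $\mathrm{Res}^{K'}_G N\in\mathrm{Loc}(M_{K'})\subseteq\mathcal{L}$.

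Thus every $Y_n\in\mathcal{L}$, so $C^\ast(BG;R)\in\mathcal{L}$, which gives weak descendability. The delicate points to verify are the commutation of products with $n_0$-step constructions, including the retracts, and the conjugation-invariance of restricted cochains.
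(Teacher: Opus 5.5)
Your proposal is correct and is essentially the paper's proof. You reduce to the $Y_n$ via \cref{prop:findim model}, use the uniform bound in condition (b) to commute the $n_0$-step constructions with the infinite products over $I_n$, identify restricted cochains of $G$-conjugate subgroups, and pass from thick to localizing closure because $\mathrm{Res}^{K'}_G$ preserves colimits; the paper only differs in packaging the product step through the category $\prod_{H\in\Fin(G)}\Mod_{C^\ast(BH;R)}$ and the exact, product-preserving right adjoint $\mathrm{Res}_{\Fin(G)}$.

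One small correction: the shifts and numbers of summands in the constructions vary with $i$, so the building blocks are products of \emph{shifted} copies of the $M_{K'}$. Your last step handles these verbatim, since $\prod_J\Sigma^{s_j}M_{K'}\simeq\mathrm{Res}^{K'}_G\bigl(\prod_J\Sigma^{s_j}C^\ast(BK';R)\bigr)$.
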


\begin{proof}
    Recall that  $\mathcal{F}(R)$ denotes  $\cup_{H\in \Fin(G) } \mathcal{F}(\underline{R}_H)$, and that $\mathcal{F}$ is a set of representatives of $G$-conjugacy classes in $\mathcal{F}(R)$. Now, consider the functor 
    \[
    \mathrm{Ind}_{\mathcal{F}(R)} =\prod_{H\in \mathcal{F}(R)} \mathrm{Ind}^{H}_G\colon \Mod_{C^\ast(BG;R)}\xrightarrow[]{}\prod_{H\in \mathcal{F}(R)}\Mod_{C^\ast(BH;R)}
    \]
   which has a right adjoint $\mathrm{Res}_{\mathcal{F}(R)}$. 
    Let $\mathrm{Res}_{\mathcal{F}}$ denote the right adjoint of the functor $\mathrm{Ind}_\mathcal{F}$. We claim that the thick closure of the essential image of $\mathrm{Res}_{\mathcal{F}}$ agrees with the thick closure of the essential image of $\mathrm{Res}_{\mathcal{F}(R)}$. Indeed, for $H\in \Fin(G)$ and  $K\in \mathcal{F}(\underline{R}_H)$, we have that $K$ is conjugate to a subgroup $K'\in \mathcal{F}$ in $G$; that is,  there is an element $g\in G$ such that ${}^gK=K'$ lies in $\mathcal{F}$. In particular, we have a commutative triangle 
\begin{center}
    \begin{tikzcd}
      &  \mathrm{Mod}_{C^\ast(BG;R)}   & \\ 
      \mathrm{Mod}_{C^\ast(BK;R)} \arrow[ur,"\mathrm{Res}_{G}^{K}"]  & & \arrow[ll,"{c^*_g}"'] \mathrm{Mod}_{C^\ast(BK';R)} \arrow[ul,"\mathrm{Res}_G^{K'}"']
    \end{tikzcd} 
\end{center}
where the bottom map is induced by restriction along conjugation by $g$, which is an equivalence. In other words,  $\mathrm{Res}_G^{K} \simeq \mathrm{Res}^{K'}_G \circ {c^\ast_g}$. We deduce that the thick closure of the image of $\mathrm{Res}_G^{K}$ agrees with the thick closure of the image of $\mathrm{Res}_G^{K'}$. Since $\mathrm{Res}_{\mathcal{F}}$ commutes with limits, and $\mathcal{F}(R)$ agrees with with $\mathcal{F}$ up to $G$-conjugacy, we deduce the claim.   

Now,  consider the functor $\mathrm{Res}_{\Fin(G)}= \prod_{H\in \Fin(G)}\mathrm{Res}_G^H$ which is right adjoint to the functor  $\prod_{H\in \Fin(G)}\mathrm{Ind}_G^H$. Since 
\[
\mathrm{Res}_G^H \mathrm{Res}^K_H \simeq \textrm{Res}_G^K 
\]
for any chain of subgroups $K\leq H\leq G$, we deduce that $\mathrm{Res}_{\mathcal{F}(R)}$  
fits into a commutative diagram: 
\begin{center}
    \begin{tikzcd}
       \mathrm{Mod}_{C^\ast(BG;R)} &  &  \prod_{H\in \Fin(G)} \mathrm{Mod}_{C^\ast(BH;R)} \arrow[ll,"\mathrm{Res}_{\Fin(G)}"'] \\ 
       & & \\
   \prod_{K\in \mathcal{F}(R)} \mathrm{Mod}_{C^\ast(BK;R)}\arrow[uu,"\mathrm{Res}_{\mathcal{F}(R)}"]  & & \prod_{H\in \Fin(G)} \mathrm{Mod}_{\prod_{K\in \mathcal{F}(\underline{R}_H)}C^\ast(BK;R)} \arrow[uu,"\mathrm{\prod_{H\in \Fin(G)} Res_{\mathcal{F}(\underline{R}_H)}}"']\arrow[ll]
    \end{tikzcd} 
\end{center}
Consider $\mathcal{C}\subset \prod_{H\in \Fin(G)} \mathrm{Mod}_{C^\ast(BH;R)}$ the thick subcategory generated by the essential image of the right vertical map. Given $n>0$, $\mathcal{C}$ contains all elements $(M_H)_{H\in \Fin(G)}$  such that, for every $H$, the $C^*(BH;R)$-module $M_H$ is generated in at most $n$ steps by elements in the essential image of $\Res_{\mathcal{F}(\underline{R}_H)}$. Since $\mathcal{F}$ is a $G$-finite derived defect base for $R$, condition $(b)$ in \cref{def-g-finite-derived} implies that there exists such $n>0$ when $M_H=C^\ast(BH;R)$. Therefore, we can conclude that
\[
\prod_{H\in \Fin(G)}C^\ast(BH;R)
\]
belongs to $\mathcal{C}$.

Using the commutativity of the above square, we deduce that the module
\begin{equation}\label{eq-prod}
    Y_n=\prod_{H_i \in \Fin(G), \,  i\in I_n} \textrm{Res}_G^{H_i}C^\ast(BH_i,R) 
\end{equation}
also belongs to the thick subcategory generated by the essential image of $\mathrm{Res}_{\mathcal{F}(R)}$. Indeed, it is clear that the module in \cref{eq-prod} belongs to the image of $\mathrm{Res}_{\Fin(G)}$, since the latter commutes with limits. Hence, by the preceding observation, the module in \cref{eq-prod} also belongs to the thick subcategory generated by the essential image of $\Res\mathcal{F}$.

 Thus, the last step is to apply \cref{prop:findim model} to conclude that
\begin{equation}\label{eq-thick}
    {C^\ast(BG;R)} \in \mathrm{Thick}(\mathrm{Im}(\mathrm{Res}_\mathcal{F})) \subseteq \mathrm{Loc}\left(\prod_{K\in \mathcal{F}} \mathrm{Res}^K_G C^\ast(BK;R) \right)
\end{equation}
where the inclusion follows from the fact that $\mathrm{Res}_\mathcal{F}$ commutes with colimits since  $\mathrm{Ind}_\mathcal{F}$ is a geometric functor. Hence $\textrm{Ind}_\mathcal{F}$ is weakly descendable as desired.
\end{proof}

\begin{corollary}\label{coro:chouinard in the field case}
     Let $G$ be a discrete group with a  finite-dimensional stable model $X$ for $\underline{E}G$, and $R$ be a commutative ring spectrum. Assume that $R$ has a \textit{$G$-finite derived defect base} $\mathcal{F}$. Consider  induction and coinduction   
     \[\mathrm{Ind}_{\mathcal{F}},\, \mathrm{CoInd}_\mathcal{F}\colon \Mod_{C^*(BG;R)}\xrightarrow[]{}\Mod_{\prod_{H\in \mathcal{F}}C^*(BH,R)}\] 
     along the ring map $\varphi\colon C^\ast(BG;R)\to \prod_{H\in \mathcal{F}}C^\ast(BH;R)$. Then both functors $\mathrm{Ind}_\mathcal{F}$ and $\mathrm{CoInd}_\mathcal{F}$ are conservative.
\end{corollary}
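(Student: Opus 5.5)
This follows from \cref{thm-weakly} combined with \cref{weakly des are conservative}. The one point to check is that the functors in the corollary, which are induction and coinduction along the single ring map $\varphi$, agree with the product functor $\mathrm{Ind}_\mathcal{F}$ of \cref{thm-weakly} and its double right adjoint.

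First, identify categories. Assume $\mathcal{F}$ is finite, as a $G$-finite derived defect base is by condition (a) of \cref{def-g-finite-derived}. Then $\prod_{H\in\mathcal{F}}C^\ast(BH;R)$ is a finite product of commutative ring spectra, and there is a symmetric monoidal equivalence
\[
\Mod_{\prod_{H\in \mathcal{F}}C^\ast(BH;R)}\simeq \prod_{H\in\mathcal{F}}\Mod_{C^\ast(BH;R)},
\]
obtained by splitting along the orthogonal idempotents. Under this equivalence, extension of scalars along $\varphi$ becomes $(\mathrm{Ind}^H_G)_{H\in\mathcal{F}}$, which is the geometric functor $\mathrm{Ind}_\mathcal{F}$ of \cref{thm-weakly}. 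By uniqueness of adjoints, the restriction along $\varphi$ becomes $\mathrm{Res}_\mathcal{F}=\prod_H\mathrm{Res}^H_G$, and the coinduction $\Hom_{C^\ast(BG;R)}(\prod_H C^\ast(BH;R),-)$ becomes $(\mathrm{CoInd}^H_G)_H$. So $\mathrm{CoInd}_\mathcal{F}$ is the functor $f^!$ in the notation of \cref{weakly des are conservative}.

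Next, apply \cref{thm-weakly}: under the hypotheses of the corollary, $\mathrm{Ind}_\mathcal{F}$ is weakly descendable. By \cref{weakly des are conservative}, for every $M\in\Mod_{C^\ast(BG;R)}$ we have
\[
M\in\mathrm{Locid}\langle \mathrm{Res}_\mathcal{F}\mathrm{Ind}_\mathcal{F}M\rangle \quad\text{and}\quad M\in\mathrm{Colocid}\langle\mathrm{Res}_\mathcal{F}\mathrm{CoInd}_\mathcal{F}M\rangle.
\]
If $\mathrm{Ind}_\mathcal{F}M\simeq0$, then the generator of the localizing ideal vanishes, so $M\simeq 0$. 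Likewise, if $\mathrm{CoInd}_\mathcal{F}M\simeq 0$, then $M$ lies in the zero colocalizing subcategory, so $M\simeq0$. Since both functors are exact between stable categories, detecting zero objects is equivalent to conservativity.

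To make the argument self-contained, I would spell out the projection-formula step behind \cref{weakly des are conservative}. Weak descendability gives $\mathbb{1}\in\mathrm{Locid}(f_\ast\mathbb{1})$. Tensoring with $M$ preserves localizing ideals, and $f_\ast\mathbb{1}\otimes M\simeq f_\ast f^\ast M$ by the projection formula of \cref{rec-base-change}. Dually, the internal hom $\underline{\Hom}(-,M)$ sends the localizing ideal generated by $f_\ast\mathbb 1$ into the colocalizing ideal generated by $\underline{\Hom}(f_\ast\mathbb{1},M)\simeq f_\ast f^! M$.

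The only point requiring care is this identification of coinduction along $\varphi$ with $f^!$ for the product functor; it rests on the finite product decomposition of module categories. If $\mathcal{F}$ were infinite, $\Mod_{\prod_H C^\ast(BH;R)}$ would no longer split as the product of the module categories, and the identification would fail.
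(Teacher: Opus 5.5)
Your proof is correct and follows the paper's intended argument. The paper gives no separate proof: the corollary is read off from the weak descendability in \cref{thm-weakly} together with the projection-formula argument of \cref{weakly des are conservative}. Your extra step, identifying $\Mod_{\prod_{H\in\mathcal{F}}C^\ast(BH;R)}$ with $\prod_{H\in\mathcal{F}}\Mod_{C^\ast(BH;R)}$ via the finiteness of $\mathcal{F}$ from condition (a), is routine bookkeeping that the paper leaves implicit, and you handle it correctly.
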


\begin{remark}
We refer the reader to \cite[Figure 1.7]{mathew2019derived} for a list of ring spectra together with their defect bases; this list can be used to produce concrete examples of ring spectra with a $G$-finite derived defect base, and hence where the previous results apply. Notice that if the group $G$ admits a finite model for $\underline{E}G$, then every ring spectrum automatically has a $G$-finite derived defect base. 
\end{remark}

We conclude this section with some examples of commutative ring spectra that do and do not satisfy condition $(a)$ in \cref{def-g-finite-derived} for locally finite Artinian groups. At present, we do not know whether these ring spectra satisfy condition $(b)$ as well.

\begin{example}
Let $G$ be a locally finite Artinian group. If $R$ is one of $MO$, $K(n)$, or $T(n)$, then $R$ satisfies condition $(a)$ in \cref{def-g-finite-derived}.

In contrast, if $R=KU$, then $R$ does not have a $G$-finite derived defect base whenever $G$ is infinite. Indeed, for a finite group $H$, the defect base of $\underline{KU}_H$ is the family of all cyclic subgroups of $H$, with no restriction on their cardinality. Since, when $G$ is infinite, there are infinitely many $G$-conjugacy classes of cyclic subgroups, the claim follows.
\end{example}


\section{Finiteness properties of the cohomology ring}

Let $G$ be a group admitting a finite-dimensional model for $\underline{E}G$, and let $k$ be a commutative Noetherian ring. In this section, we study finiteness properties of the cohomology algebra $H^\ast(G;k)$.

In particular, for groups that admit a finite model for $\underline{E}G$, we prove that $H^\ast(G;k)$ is finitely generated as a $k$-algebra. For locally finite Artinian groups, the algebra $H^\ast(G;k)$ is finitely generated when $k=\mathbb{F}_p$. We do not know whether the latter result remains true for an arbitrary Noetherian base ring, although we expect this to be the case.

\subsection{Mod $p$ cohomology}
We begin with a finiteness result for the mod $p$ cohomology of a locally finite Artinian group.

\begin{proposition}\label{prop:cohomology locally finite Artinian}
    Let $G$ be a locally finite Artinian group. Then $H^*(G;\mathbb{F}_p)$ is a finitely generated $\mathbb{F}_p$-algebra.
\end{proposition}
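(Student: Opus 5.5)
The plan is to reduce to the Sylow situation and then use the known Noetherianity results for $p$-local compact groups and discrete $p$-toral groups. By \cref{rec: structure l.f. Artinian group}, $G$ is an extension $H\to G\to P$ with $H=\prod_p(\mathbb Z/p^\infty)^{r_p}$ and $P$ finite. It has a Sylow $p$-subgroup $S$, unique up to conjugacy, which is a discrete $p$-toral group; this is the result of \cite{BLO07}. Moreover $G$ gives rise to a $p$-local compact group whose classifying space is $p$-completion equivalent to $BG$. Therefore $H^*(G;\mathbb F_p)$ is isomorphic to the ring of stable elements $\varprojlim_{\mathcal F_S(G)} H^*(-;\mathbb F_p)$, a subring of $H^*(BS;\mathbb F_p)$. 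This identification is the first step, and I would cite \cite{BLO07} for it rather than reprove it.

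The second step is to show that $H^*(BS;\mathbb F_p)$ is a finitely generated $\mathbb F_p$-algebra. Write $S$ as an extension $T\to S\to \pi$ with $T=(\mathbb Z/p^\infty)^r$ and $\pi$ a finite $p$-group. The inclusion $T\to (S^1)^r$ is a mod $p$ equivalence on classifying spaces, so $H^*(BT;\mathbb F_p)\cong \mathbb F_p[x_1,\dots,x_r]$. I would then run the Lyndon–Hochschild–Serre spectral sequence, or equivalently use the fact that $BS$ is mod $p$ equivalent to $B\bar S$ for a compact Lie group $\bar S$ with component group $\pi$ and maximal torus $(S^1)^r$. Venkov's theorem then gives finite generation, and moreover shows that $H^*(BS;\mathbb F_p)$ is a finite module over a polynomial subring. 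An alternative is to cite Quillen's argument \cite{Qui71}, using a faithful unitary representation: the monomorphism $\rho\colon G\hookrightarrow U(N)$ constructed in the proof of \cref{prop-discrete-ptora-defect} restricts to $S$, and the Eilenberg–Moore argument there shows that $H^*(BS;\mathbb F_p)$ is finite over $H^*(BU(N);\mathbb F_p)$.

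The third step is to pass from $H^*(BS)$ to the stable elements. The saturated fusion system has finitely many $S$-conjugacy classes of centric radical subgroups, so the stable elements form a finite limit, namely an equalizer of finitely many restriction maps between finitely generated $H^*(BU(N))$-modules. The cleanest route is to exhibit $H^*(BG;\mathbb F_p)$ as a finite module over the image of $H^*(BU(N);\mathbb F_p)$ under $B\rho$. Indeed, $H^*(BS)$ is a finite module over the Noetherian ring $H^*(BU(N))$, and the stable elements form an $H^*(BU(N))$-submodule. The module structure is compatible because the classes are pulled back from $G$ itself, so they are automatically stable. A submodule of a finite module over a Noetherian ring is finite, so $H^*(G;\mathbb F_p)$ is a finitely generated module over a finitely generated algebra, hence finitely generated as an algebra.

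I expect the main obstacle to be the first step: justifying that $H^*(G;\mathbb F_p)$ agrees with the stable elements for a locally finite, non-finitely-generated group. Concretely, one must check that $BG^\wedge_p$ is the classifying space of the associated $p$-local compact group, which is exactly the content of \cite[Section 8]{BLO07}. If this citation is not sufficiently direct, I would instead write $G=\bigcup G_n$ and use $H^*(G)\cong\varprojlim H^*(G_n)$, which holds because the $\varprojlim^1$ term vanishes by degreewise finiteness. Each $H^*(G_n)$ is finite over $H^*(BU(N))$ via $\rho$ with uniformly bounded rank, by the same Eilenberg–Moore freeness applied to $U(N)/S$ with fixed dimension. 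The inverse limit of these uniformly bounded, compatible finite modules then stabilizes as a submodule of a fixed finite module, which again yields finite generation.
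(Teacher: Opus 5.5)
\textbf{There is a genuine gap.} Your third step needs $H^*(BS;\mathbb F_p)$ to be a finitely generated module over $H^*(BU(N);\mathbb F_p)$ \emph{along $B\rho|_S$}, that is, over a Noetherian ring acting through $H^*(BG;\mathbb F_p)$. Without this, the stable elements are only a subring of a finitely generated algebra, and such subrings need not be finitely generated. Neither of your justifications establishes this finiteness.

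First, Venkov's theorem applies to closed subgroups of $U(N)$, but $\rho(S)$ is not closed, and its closure can have a larger torus. Take $G=\mathbb Z/5^\infty\rtimes\mathbb Z/4$, with the generator acting by a primitive fourth root of unity $i\in\mathbb Z_5$. The induced representation of \cref{prop-discrete-ptora-defect} sends $h\in S=\mathbb Z/5^\infty$ to $(\mu(h),\mu(-ih),\mu(-h),\mu(ih))$, whose image is dense in a $2$-torus of $U(4)$. So $BS\to B\overline{\rho(S)}$ is not a mod $5$ equivalence.

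An abstractly built compact Lie model $\bar S$ with torus $(S^1)^r$ does not carry $\rho$: a character like $\mu(i\,\cdot)$ does not extend continuously to $S^1$. So Venkov only makes $H^*(BS)$ finite over some polynomial subring unrelated to $H^*(BG)$, which is useless for your third step.

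Second, the ``Eilenberg--Moore argument'' in the proof of \cref{prop-discrete-ptora-defect} concerns a different object. There $S$ denotes the elementary abelian subgroup of order-$q$ elements in the maximal torus of $U(N)$, and $K$ is a \emph{finite} subgroup. The output is freeness of $H^*((U(N)/S)_{hK};k)$ over $H^*(BK;k)$. It says nothing about a discrete $p$-toral Sylow subgroup, nor about finiteness of cohomology over $H^*(BU(N))$.

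Your fallback paragraph repeats the same misreading. In it, the ``uniformly bounded rank'' of $H^*(G_n)$ over $H^*(BU(N))$ is not what that argument gives. The stabilization of $\varprojlim_n H^*(G_n)$ inside a fixed finite module is also not argued; the restriction maps are in general neither injective nor surjective.

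\textbf{Possible fixes.} The missing finiteness is true, but it needs an actual argument. One option is a Quillen-type detection of $H^*(BS)$ on the finitely many $S$-conjugacy classes of elementary abelian subgroups $E$. On each such $E$ the representation $\rho$ is faithful, so $H^*(BE)$ is finite over $H^*(BU(N))$. Combined with finite generation of $H^*(BS)$, this makes every class integral over the image of $H^*(BU(N))$.

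The paper avoids the issue altogether by not using the Sylow subgroup, which has infinite index in $G$ once there is $p'$-divisible torsion, so no transfer or norm is available for it. It instead:
\begin{enumerate}
\item passes to $G'=\pi^{-1}(P)$, which has finite index prime to $p$, and reduces to $G'$ by transfer;
\item runs the Lyndon--Hochschild--Serre spectral sequence for $A\to G'\to P$, where Künneth gives $H^*(A;\mathbb F_p)\cong H^*((\mathbb Z/p^\infty)^{r_p};\mathbb F_p)$;
\item concludes with Evens' finite generation theorem.
\end{enumerate}
This route needs neither the realization theorem of \cite{BLO07} and stable elements nor any unitary embedding.
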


\begin{proof}
    Any locally finite Artinian group fits into an extension 
    \[
    1\to A\to G\stackrel{\pi}{\to} F \to 1
    \] 
    where $A\cong \oplus_q (\mathbb{Z}/q^\infty)^{r_q}$ and the sum runs over a finite collection of primes $q$ with $r_q> 0$ and $F$ is a finite group; see \cref{rec: structure l.f. Artinian group}. Let $P\leq F$ be a Sylow $p$-subgroup, then $G'=\pi^{-1}(P)\leq G$ is a subgroup of finite index prime to $p$. By a transfer argument (see \cite[Lemma 2.6]{DW94}), it is enough to show that $H^*(G';\mathbb{F}_p)$ is Noetherian. Note that $G'$ is also a locally finite Artinian group which is an extension of $P$ by $A$.  Now, consider the Hochschild–Serre spectral sequence 
    \[
      E^{s,t}_2= H^s(P,H^t(A;\mathbb{F}_p))   \Rightarrow H^{s+t}(G';\mathbb{F}_p).
    \]
  Now, $A=A_1\times A_2$ with 
\[
 A_1= (\mathbb Z/p^\infty)^{r_p} \mbox{ and } A_2= \left(  \bigoplus_{q\not= p} (\mathbb{Z}/q^\infty)^{r_q} \right).
\]
By the Künneth formula, we get 
\[
H^\ast(A;\mathbb F_p)=H^\ast(A_1\times A_2;\mathbb F_p)\cong H^\ast(A_1;\mathbb F_p)\otimes H^\ast(A_2;\mathbb F_p)\cong H^\ast(A_1;\mathbb F_p)
\]
which is a finitely generated $\mathbb F_p$-algebra as shown in the appendix of \cite{DW94}. By the cohomological finite generation property for finite groups \cite{Eve61}, we obtain that $H^\ast(P;H^\ast(A;\mathbb F_p))$ is a finitely generated $\mathbb F_p$-algebra and a Noetherian module over $H^\ast(A;\mathbb F_p)^P$. The latter implies that $E^{\ast,\ast}_r$ collapses at a finite stage by standard methods \cite{Eve61}, and the former that $E^{\ast,\ast}_2$ is a finitely generated $\mathbb F_p$-algebra, and hence so is $E^{\ast,\ast}_r$ for any $r\geq 2$. It follows then that $H^\ast(G';\mathbb F_p)$ is a finitely generated $\mathbb F_p$-algebra as desired.
\end{proof}

Let us also record the following result, which is a special case of \cite[Theorem 4.8]{BK02}. In the next subsection, we will generalize this to an arbitrary Noetherian base; our approach is not based on this mod-$p$ version. 

\begin{proposition}\label{prop-cohomology finite EG}
     Let $G$ be a group with a finite model for $\underline{E}G$. Then $H^*(BG;\mathbb{F}_p)$ is a finitely generated $\mathbb{F}_p$-algebra.
\end{proposition}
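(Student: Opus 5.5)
Fix a finite $G$-CW model $X$ for $\underline{E}G$ and run the Bredon / isotropy spectral sequence for the Borel construction $EG\times_G X\simeq BG$ (\cref{prop:findim model} gives the cochain-level version of this filtration):
\[
E_1^{s,t}=\prod_{\sigma\in I_s} H^t(H_\sigma;\mathbb F_p)\Longrightarrow H^{s+t}(G;\mathbb F_p),
\]
where $\sigma$ runs over the finitely many $G$-orbits of $s$-cells and $H_\sigma$ is the corresponding finite isotropy group. Because the model is finite, $I_s$ is finite for each $s$ and $E_1^{s,t}=0$ for $s>d=\dim X$. Hence the spectral sequence is a first-quadrant sequence with finitely many nonzero columns. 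It collapses at $E_{d+1}$, and $H^\ast(G;\mathbb F_p)$ has a finite filtration whose graded pieces are subquotients of $E_1$.

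To control $E_1$ we need a Noetherian ring acting on it compatibly with the differentials. The plan is to embed $G$ up to finite index into something with Noetherian cohomology. The cleanest choice is the equivariant cohomology approach of Quillen. Set
\[
A \coloneqq \prod_{E\in\mathcal{E}_p(G)/G} H^\ast(E;\mathbb F_p),
\]
which is a finite product by \cref{rk:finite model finite conjugacy classes}. Consider also the ring $S \coloneqq \prod_{H\in\Fin(G)/G}H^\ast(H;\mathbb F_p)$ over finitely many conjugacy classes. Each $H^\ast(H;\mathbb F_p)$ is finitely generated by Evens \cite{Eve61}, so $S$ is a finitely generated $\mathbb F_p$-algebra. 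The inverse-limit ring $L=\lim_{\mathcal O_{\Fin}(G)}H^\ast(H;\mathbb F_p)$ is a subring of $S$ cut out by finitely many equalizer conditions. It then suffices to show two things: (i) $L$ is finitely generated and $S$ is a finite $L$-module; (ii) the image of $H^\ast(G;\mathbb F_p)\to L$ contains a subalgebra over which $L$ is finite, and $H^\ast(G;\mathbb F_p)$ is a finite module over that subalgebra.

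For (i), use Quillen's argument. Pick a faithful unitary representation of each of the finitely many conjugacy classes of finite subgroups. Taking the Chern classes of their inductions gives elements of $H^\ast(G;\mathbb F_p)$ itself, since $G$ has a finite-index torsion-free subgroup when $\underline{E}G$ is finite modulo the usual hypotheses. If instead we avoid that, we work in the Borel cohomology of $X\times U(N)/T$. Over the polynomial ring $P$ generated by these Chern classes, each $H^\ast(H;\mathbb F_p)$ is a finite module, by Venkov's argument applied to $U(N)/H$. Thus $S$, and hence each $E_1^{s,\ast}$ (a finite product of such), is a finitely generated $P$-module. Since the classes come from $H^\ast(G;\mathbb F_p)$, the spectral sequence is one of $P$-modules. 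Because $P$ is Noetherian, all $E_r$ and $E_\infty$ are finitely generated $P$-modules. A finite filtration then makes $H^\ast(G;\mathbb F_p)$ a finitely generated $P$-module, hence a finitely generated $\mathbb F_p$-algebra.

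The main obstacle is producing the Noetherian ring $P$ inside $H^\ast(G;\mathbb F_p)$ that acts finitely on every $H^\ast(H;\mathbb F_p)$ uniformly. Concretely, the difficulty is finding a single faithful representation-like class of $G$ restricting suitably to every finite subgroup. I would first try the induced-representation trick: take a finite-index normal subgroup $N$ acting freely, as in the vcd case. Failing that, I would replace the Chern classes with the Evens norm. Taking the product over the finitely many cells, the multiplicative transfer of Chern classes from each $H_\sigma$ defines classes in Bredon cohomology that survive to $E_\infty$ as permanent cycles. A power of these lifts to $H^\ast(G;\mathbb F_p)$, by the finite length $d$ of the filtration and the $p$-power trick for the finitely many differentials.
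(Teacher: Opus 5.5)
Your overall architecture is sound. It is essentially the route the paper takes for \cref{thm-cfg}, of which this proposition is the case $k=\mathbb F_p$ (the paper itself only cites \cite{BK02} here). The ingredients are a finite filtration coming from the cells of $X$, a Noetherian ring acting through $H^\ast(G;\mathbb F_p)$ over which every isotropy cohomology ring is finite, and then finiteness of the abutment. Your spectral sequence of $P$-modules is the unrolled form of the paper's thick-subcategory argument via \cref{prop:findim model}.

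The gap is the step you flag yourself, the construction of $P$: none of your mechanisms works as stated.
\begin{itemize}
\item Inducing a faithful representation of a finite subgroup $H$ of infinite index up to $G$ does not give a finite-dimensional representation, so there are no Chern classes to take.
\item The hypothesis gives you no torsion-free subgroup of finite index, since groups with a finite model for $\underline{E}G$ need not be virtually torsion-free. So Quillen's finite-vcd argument is not available.
\item The Evens norm is only defined for subgroups of finite index.
\item The Borel cohomology of $X\times U(N)/T$ is not defined as stated, because $G$ does not act on $U(N)/T$. You would need a $G$-equivariant bundle over $X$ restricting to faithful representations of the isotropy groups, and you do not construct one.
\item Choosing a faithful representation separately for each conjugacy class of finite subgroups does not produce an element of $E_2^{0,\ast}\cong\lim_{\mathcal{O}_{\Fin}(G)^{\mathrm{op}}}H^\ast(H;\mathbb F_p)$. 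The resulting Chern classes need not be compatible under restriction and conjugation, so your closing $p$-power trick has nothing to act on.
\end{itemize}

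The missing ingredient is a \emph{coherent} family of faithful representations. Let $N=\mathrm{lcm}\{|F| : F\in\Fin(G)\}$, which is finite because there are only finitely many conjugacy classes of finite subgroups. Set $\rho_F=(N/|F|)\cdot\rho_{\mathrm{reg},F}\colon F\hookrightarrow U(N)$. The regular representation of $F$ restricts to $|F:F'|$ copies of that of $F'\leq F$, and conjugation preserves regular representations, so the $\rho_F$ are compatible up to isomorphism. Their Chern classes therefore assemble into a map $H^\ast(BU(N);\mathbb F_p)\to L$, over which every $H^\ast(F;\mathbb F_p)$ is finite by Venkov's argument; this is \cref{prop-fg-limitF}.

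With this in hand, your last sentence does go through. The classes $c_i$ have even degree, the spectral sequence is multiplicative from $E_2$ on, and its nonzero columns lie in $0\leq s\leq d$. By the Leibniz rule in characteristic $p$, $c_i^{p^d}$ is a permanent cycle, so it lies in the image of the edge map $H^\ast(G;\mathbb F_p)\to E_2^{0,\ast}$; this is the $\mathcal N$-isomorphism of \cref{prop-F-i}. Lift these classes to $y_i\in H^\ast(G;\mathbb F_p)$ and put $P=\mathbb F_p[y_1,\dots,y_N]$. The restriction of $y_i$ to each $H_\sigma$ is $c_i(\rho_{H_\sigma})^{p^d}$, so each $H^\ast(H_\sigma;\mathbb F_p)$ is a finite $P$-module, and your spectral sequence argument finishes the proof. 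The paper instead lifts generators of the image of $H^\ast(G;k)\to L$, which is finitely generated by Artin--Tate; either choice works.
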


\subsection{Finiteness properties of group cohomology over an arbitrary Noetherian base} 

This subsection is devoted to proving the following result for groups admitting a finite model for the classifying space for proper actions. Our strategy consists of leveraging \cref{thm-weakly}. Along the way, we establish several relevant results that will be used in the subsequent section.

\begin{theorem}\label{thm-cfg}
     Let $G$ be a group admitting a finite model for $\underline{E}G$ and $k$ be a commutative Noetherian ring. Then $H^\ast(G;k)$ is a finitely generated $k$--algebra.
\end{theorem}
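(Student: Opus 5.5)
The strategy is to use the finite model for $\underline{E}G$ to express $C^\ast(BG;k)$ as a \emph{finite} limit of cochain algebras of finite subgroups, and then push finite generation through the associated spectral sequence. Fix a finite $G$-CW model $X$ for $\underline{E}G$ of dimension $d$, with finitely many cells $G/H_i\times D^n$, $i\in I_n$. By \cref{prop:findim model} and its proof, $C^\ast(BG;k)$ is the limit of the finite tower $C^\ast(EG_+\wedge_G X^n;k)$. Its successive fibres are $\Sigma^n Y_n$, where $Y_n=\prod_{i\in I_n}C^\ast(BH_i;k)$ is now a finite product.

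This tower yields the Bredon/isotropy spectral sequence
\[
E_1^{s,t}=\prod_{i\in I_s}H^t(H_i;k)\Rightarrow H^{s+t}(G;k),
\]
with $E_2^{s,t}=H^s_{\mathrm{Br}}(X;\mathcal{H}^t)$. Here $\mathcal{H}^t$ is the coefficient system $G/H\mapsto H^t(H;k)$. The spectral sequence is concentrated in columns $0\le s\le d$, so it degenerates at $E_{d+1}$. In particular $H^\ast(G;k)$ has a finite filtration whose associated graded is a subquotient of $E_1$.

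The key point is to find a single finitely generated $k$-algebra over which everything is a Noetherian module. By Evens–Venkov for finite groups over a Noetherian base, each $H^\ast(H_i;k)$ is a finitely generated $k$-algebra. I would then embed $G$, or rather each of the finitely many finite subgroups $H_i$, into a common compact Lie group. More concretely, I would use Chern classes: choose faithful complex representations of the $H_i$ and induce them to a single representation-theoretic gadget. A cleaner alternative is to use the $E_2$-page itself: the restriction $H^\ast(G;k)\to\prod_{i\in I_0}H^\ast(H_i;k)$ together with the module structure.

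The cleanest route I would try first is the following. Let $A=\mathrm{im}\bigl(H^\ast(G;k)\to \prod_{i}H^\ast(H_i;k)\bigr)$, or better, take enough elements of $H^\ast(G;k)$. These would be Chern classes of a representation $\rho\colon G\to U(N)$ (or $GL_N(k)$-type classes), obtained by inducing faithful representations of the finitely many conjugacy classes of finite subgroups. The aim is to show that each $H^\ast(H_i;k)$ is a finite module over $H^\ast(BU(N);k)$ via $\rho|_{H_i}$. This is the Venkov argument: $U(N)/H_i$ is a compact manifold, and the Serre spectral sequence of $U(N)/H_i\to BH_i\to BU(N)$ gives finiteness over the Noetherian ring $H^\ast(BU(N);k)$.

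Granting this, each $E_1^{s,\ast}$ is a finitely generated $H^\ast(BU(N);k)$-module, where the action goes through $H^\ast(G;k)\to H^\ast(H_i;k)$. Hence so is every subquotient, and in particular each $E_\infty^{s,\ast}$, since $E_1$ lives in finitely many columns. The differentials are linear over $H^\ast(G;k)$, hence over the Chern classes; this is because the tower consists of $C^\ast(BG;k)$-modules. A finite filtration with finitely generated graded pieces then shows that $H^\ast(G;k)$ is a finitely generated module over the Noetherian ring $H^\ast(BU(N);k)$. Therefore it is a finitely generated $k$-algebra.

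The main obstacle is producing a homomorphism $\rho\colon G\to U(N)$ that is faithful on every finite subgroup, since $G$ need not be residually finite or linear. If this fails, I would instead use weak descendability (\cref{thm-weakly}) to transport the problem. The idea is to find classes in $H^\ast(G;k)$ whose restrictions to each $H^\ast(H;k)$, for $H$ in a finite set of representatives, generate a subring over which $H^\ast(H;k)$ is finite. The natural candidates are norms or transfers of Chern classes: multiplicative Evens norms along $G/H$ are not available for infinite index, so instead I would use Bredon-equivariant Chern classes of the $G$-vector bundle $\mathrm{Ind}$ over $\underline{E}G$. Such bundles exist for finite proper $G$-CW complexes by Lück–Oliver. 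Their Borel Chern classes live in $H^\ast(EG\times_G X;k)=H^\ast(G;k)$ and restrict to Chern classes of faithful representations of each $H_i$. This replaces $\rho$ and makes the argument above go through.
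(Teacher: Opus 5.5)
Your argument is correct once you commit to the L\"uck--Oliver fallback, but it follows a genuinely different route from the paper. Drop the two detours. Inducing faithful representations of the finite subgroups up to $G$ cannot give a finite-dimensional representation, since these subgroups have infinite index. Weak descendability plays no role in your proof.

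The essential input is L\"uck--Oliver's theorem that a finite proper $G$-CW complex $X$ carries a $G$-vector bundle $E\to X$ whose fibre $E_x$ is a faithful $G_x$-representation for every $x$. State and cite it precisely, since everything rests on it. Its Borel construction gives a genuine map $BG\simeq EG\times_G X\to BU(N)$. The restriction of this map to $BH_i$ classifies the faithful representation of $H_i$ on $E_{x_i}$, for $x_i\in X^{H_i}$. By Venkov, each $E_1^{s,\ast}=\prod_{i\in I_s}H^\ast(H_i;k)$ is then finite over $k[c_1,\dots,c_N]$. The finite, $H^\ast(G;k)$-linear spectral sequence of the tower finishes the proof. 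This last step is equivalent to the paper's thick-subcategory argument via \cref{prop:findim model}.

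The paper never constructs classes in $H^\ast(G;k)$ directly. It uses the compatible family $\rho_F=(N/|F|)\rho_{\mathrm{reg}}$ with $N=\mathrm{lcm}\{|F|\}$. This only gives a map $H^\ast(BU(N);k)\to B=\lim_{\mathcal{O}_{\Fin}(G)^{\mathrm{op}}}H^\ast(F;k)$ (\cref{prop-fg-limitF}). It then needs the $\mathcal{N}$-isomorphism $H^\ast(G;k)\to B$ (\cref{prop-F-i}) together with Artin--Tate. These produce a finitely generated subalgebra $A\subseteq H^\ast(G;k)$ over which every $H^\ast(F;k)$ is finite.

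Your route buys directness: it exhibits $H^\ast(G;k)$ as a finite module over an explicit polynomial ring of Chern classes, with no edge-map or Artin--Tate analysis. The cost is a nontrivial external existence theorem for equivariant bundles. The paper's route needs only elementary finite-group representation theory. Moreover, the $\mathcal{N}$-isomorphism it establishes is reused later to identify $\Spec^h(H^\ast(G;k))$ in \cref{coro-simplequillen-G}.
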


\begin{remark}
   Although perhaps surprising, to the best of our knowledge, this result has not previously appeared in the literature in this generality. 
\end{remark}

\begin{recollection}\label{rec-conjugacy-abelem}
   Recall that if $G$ is a group admitting a finite model for $\underline{E}G$, then there are only finitely many $G$-conjugacy classes of elementary abelian subgroups. In fact, there are only finitely many $G$-conjugacy classes of finite subgroups.
\end{recollection}

Let us begin by establishing some terminology.

\begin{definition}
    A morphism of (graded) commutative rings $\alpha\colon A\to B$ is an \textit{$\mathcal{N}$-isomorphism} if every element of $\mathrm{Ker}(\alpha)$ is nilpotent and $\alpha$ is \textit{power surjective}, that is, for every $b\in B$, there exists $n>0$ such that $b^n\in \mathrm{Im}(\alpha)$. We say that $\alpha$ is \textit{uniformly power surjective} if there exists $n>0$ such that $b^n\in \mathrm{Im}(\alpha)$ for every $b\in B$. Finally, we say that $\alpha$ is a \textit{uniform $\mathcal{N}$-isomorphism} if it is uniformly power surjective and there exists $m>0$ such that $x^m=0$ for every $x\in \mathrm{Ker}(\alpha)$. 
\end{definition}

The following result corresponds to \cite[Proposition 3.24]{mathew2019derived}.

\begin{proposition}
    \label{f-iso induces homeo}
    Let $\alpha\colon A\to B$ be an $\mathcal{N}$-isomorphism of  graded commutative rings. Then $\alpha$ induces a homeomorphism on homogeneous spectra 
    \[
    \Spec^h(\alpha) \colon \Spec^h(B) \to \Spec^h(A).
    \]
\end{proposition}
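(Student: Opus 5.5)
The plan is to check directly that $\Spec^h(\alpha)$ is a continuous bijection and a closed map, hence a homeomorphism. Continuity is automatic, since $\Spec^h(\alpha)^{-1}(D(a)) = D(\alpha(a))$ for homogeneous $a\in A$.

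For injectivity, let $\qq_1,\qq_2\in\Spec^h(B)$ with $\alpha^{-1}(\qq_1)=\alpha^{-1}(\qq_2)$, and take a homogeneous $b\in\qq_1$. By power surjectivity there are $n>0$ and $a\in A$ with $b^n=\alpha(a)$. Since $\alpha$ is graded, $a$ may be taken homogeneous by replacing it with its component in degree $n\cdot\deg b$. Then $a\in\alpha^{-1}(\qq_1)=\alpha^{-1}(\qq_2)$, so $b^n\in\qq_2$, and primality gives $b\in\qq_2$. As homogeneous primes are determined by their homogeneous elements, $\qq_1\subseteq\qq_2$, and by symmetry the two are equal.

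Surjectivity is the step I expect to require the most care. Let $\pp\in\Spec^h(A)$. Every element of $\ker\alpha$ is nilpotent, so $\ker\alpha\subseteq\pp$. I would then show that $\pp$ lies in the image of $\Spec^h(B)\to\Spec^h(\alpha(A))$, for which I would use the graded lying-over theorem for integral extensions. The extension $\alpha(A)\subseteq B$ is integral, because each homogeneous $b$ satisfies $b^n\in\alpha(A)$ and integral elements form a subring. Graded lying-over holds for integral extensions: one localizes at the multiplicative set of homogeneous elements outside $\pp$, takes a maximal homogeneous ideal of $B$ extending $\pp$, and checks that it contracts correctly. Alternatively, one can avoid lying-over and define the candidate prime explicitly as
\[
\qq=\{\, b\in B \text{ homogeneous} : b^n\in\alpha(\pp)\text{ for some } n\,\},
\]
where $\alpha(\pp)$ stands for $\alpha$ applied to $\pp$, generating an ideal. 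One then verifies that $\qq$ is a homogeneous prime with $\alpha^{-1}(\qq)=\pp$. The key points in that verification are that $\ker\alpha\subseteq\pp$, which gives $\alpha^{-1}(\alpha(\pp))=\pp$, and closure under sums. In odd characteristic this follows from the binomial expansion of $(b+b')^{N}$ together with power surjectivity of every monomial. Graded-commutativity signs need attention here, but squaring reduces to the even-degree, commutative part.

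For closedness, let $V(J)$ be a closed set with $J$ a homogeneous ideal of $B$. I claim its image is $V(\alpha^{-1}(J))$. The inclusion $\subseteq$ is clear. For $\supseteq$, apply the surjectivity argument to the $\mathcal N$-isomorphism $A/\alpha^{-1}(J)\to B/J$. Its kernel consists of nilpotents, because for $a$ with $\alpha(a^m)\in J$ it is nilpotent modulo $\alpha^{-1}(J)$ after accounting for the nilpotent kernel, and its power surjectivity is inherited. Thus the map is a closed continuous bijection, hence a homeomorphism.
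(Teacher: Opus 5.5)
The paper does not prove this statement; it quotes it from Mathew--Naumann--Noel (their Proposition 3.24), so there is no internal argument to compare against. Your main line is a correct, self-contained version of the standard argument behind that citation. Injectivity comes from power surjectivity plus primality. Surjectivity comes from $\ker\alpha\subseteq\pp$ together with integrality of $B$ over $\alpha(A)$ and graded lying-over. Closedness comes from rerunning surjectivity for $A/\alpha^{-1}(J)\to B/J$.

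Some points need tidying.
\begin{enumerate}
\item Do the reduction to even degrees before invoking integrality and lying-over, not only in your alternative route. The assignment $\qq\mapsto\qq\cap B^{\mathrm{ev}}$ is a homeomorphism $\Spec^h(B)\cong\Spec^h(B^{\mathrm{ev}})$ compatible with $\alpha$, because an odd $x$ lies in $\qq$ iff $x^2$ does. Moreover $A^{\mathrm{ev}}\to B^{\mathrm{ev}}$ is again an $\mathcal{N}$-isomorphism, now of commutative rings.
\item In the closedness step, $A/\alpha^{-1}(J)\to B/J$ is injective, since $\alpha^{-1}(J)$ is exactly the kernel of $A\to B/J$. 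Its kernel is zero, so your justification about the kernel is unnecessary.
\item Drop or repair the explicit candidate $\qq$. Power surjectivity of the monomials in $(b+b')^N$ only shows that each monomial lies in $\qq$, and concluding that their sum does is exactly the closure under sums you are trying to prove. A Frobenius argument would need $p$-power exponents, which are not assumed, and nothing like it is available over $\mathbb{Z}$ or $\mathbb{Q}$. That case matters here, since the paper applies this to $H^\ast(G;k)$ for arbitrary Noetherian $k$. The candidate does work once you use the integrality fact $\alpha(\pp)B\cap\alpha(A)\subseteq\sqrt{\alpha(\pp)}=\alpha(\pp)$. This identifies your $\qq$ with the homogeneous elements of $\sqrt{\alpha(\pp)B}$, but it is the same input as lying-over.
\item You can skip the closedness step entirely. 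Given bijectivity, if $b$ is homogeneous with $b^n=\alpha(a)$, then $\Spec^h(\alpha)(D(b))=D(a)$. Since such $D(b)$ form a basis, the map is open, and a continuous open bijection is a homeomorphism.
\end{enumerate}
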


A relevant source of $\mathcal N$-isomorphisms arises from the edge morphism of a multiplicative spectral sequence under suitable vanishing conditions. In particular, we obtain an $\mathcal N$-isomorphism via the equivariant Atiyah–Hirzebruch spectral sequence developed in \cite{DL98} (see also \cite[Section 3.2]{degrijse2023proper}). For the reader’s convenience, we briefly recall this spectral sequence below.

\begin{theorem}\label{thm-Atiyah-Hirzebruch}
    Let $G$ be a group with a finite-dimensional model for $\underline{E}G$. Let $\mathcal{H}^\ast$ be a multiplicative $G$-equivariant cohomology theory, and $X$ be a proper CW-complex. Then there is a conditionally convergent half plane multiplicative spectral sequence  
    \[
    E^{p,q}_2= H_{G}^p(X,\underline{\mathcal{H}}^q)\Rightarrow \mathcal{H}^{p+q}(X)
    \]
    where $H_G^\ast(-)$ denotes Bredon cohomology, and   $\underline{\mathcal{H}}$ denotes the functor given by $\mathcal{H}^q\colon G/H\mapsto \mathcal{H}^q(G/H)$. The $d_r$-differential has bidegree $(r,r-1)$.
\end{theorem}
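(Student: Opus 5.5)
The plan is to build the spectral sequence from the skeletal filtration of $X$, as in the non-equivariant case, and then check its three properties one at a time: the $E_2$-term, conditional convergence, and multiplicativity.

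\textbf{Exact couple.} Let $X_n$ be the $n$-skeleton of the proper $G$-CW-complex $X$. The cofibres $X_n/X_{n-1}\simeq \bigvee_{i\in I_n} S^n\wedge (G/H_i)_+$ have $H_i\in\Fin(G)$. Applying $\mathcal{H}^\ast$ to the cofibre sequences $X_{n-1}\to X_n\to X_n/X_{n-1}$ gives an exact couple with
\[
D_1^{p,q}=\mathcal{H}^{p+q}(X_p), \qquad E_1^{p,q}=\mathcal{H}^{p+q}(X_p,X_{p-1}).
\]
By the suspension and wedge axioms,
\[
E_1^{p,q}\cong \prod_{i\in I_p}\mathcal{H}^q(G/H_i)=\Hom_{\mathcal{O}_{\Fin}G}\bigl(\underline{C}_p(X),\underline{\mathcal{H}}^q\bigr),
\]
where $\underline{C}_\ast(X)$ is the cellular chain complex of contravariant coefficient systems on the orbit category, $G/H\mapsto C_\ast(X^H)$.

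\textbf{Identifying $E_2$.} I will show that the differential $d_1$, which is the composite $X_p/X_{p-1}\to \Sigma X_{p-1}\to \Sigma X_{p-1}/X_{p-2}$, coincides with the cellular boundary. Its components are maps $S^{p}\wedge(G/H)_+\to S^{p}\wedge(G/K)_+$, and these are determined by $G$-maps $G/H\to G/K$ together with degrees. These are exactly the data entering the Bredon cochain differential. Hence $E_2^{p,q}=H^p_G(X;\underline{\mathcal{H}}^q)$, and the differentials have the stated bidegree $(r,r-1)$ by the usual bookkeeping.

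\textbf{Conditional convergence.} The filtration is exhaustive, since $X=\operatorname{colim} X_n$ and $\mathcal{H}^\ast$ is a $G$-cohomology theory, so it satisfies the wedge/Milnor axiom. Because $X_{-1}=\varnothing$, we have $D_1^{p,q}=0$ for $p<0$. Boardman's criterion then gives conditional convergence to $\mathcal{H}^\ast(X)\cong \mathcal{H}^\ast(\operatorname{colim}X_n)$: the $\lim$–$\lim^1$ sequence identifies $\mathcal{H}^\ast(X)$ with the limit up to the derived term, and this is precisely the definition of conditional convergence. When $X$ is finite-dimensional, which is the case of interest for models of $\underline{E}G$, the filtration is finite, so the spectral sequence is bounded and converges strongly.

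\textbf{Multiplicativity (the main obstacle).} I expect this step to be the hardest. I would first try to realise the product through the diagonal $\Delta\colon X\to X\times X$, with $X\times X$ given the product $G\times G$-CW filtration $(X\times X)_n=\bigcup_{a+b=n}X_a\times X_b$. The cohomology theory $\mathcal{H}$ should be regarded as $G\times G$-equivariant through the external product, followed by restriction along the diagonal $G\to G\times G$.

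The subtlety is that restricting a product of orbits to the diagonal gives $G/H\times G/K$, which is a disjoint union of orbits $G/(H\cap {}^gK)$. These are still proper, but $X\times X$ is then only a $G$-CW-complex up to equivariant cellular approximation. I would therefore invoke the equivariant cellular approximation theorem for proper actions: $\Delta$ is $G$-homotopic to a cellular map $\Delta'$ with $\Delta'(X_n)\subseteq (X\times X)_n$.

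The external pairings $\mathcal{H}^\ast(X_a,X_{a-1})\otimes\mathcal{H}^\ast(X_b,X_{b-1})\to \mathcal{H}^\ast\bigl((X\times X)_{a+b},(X\times X)_{a+b-1}\bigr)$ then give a pairing of exact couples in the sense of Massey. Pulling back along $\Delta'$ yields the multiplicative structure and the Leibniz rule for all $d_r$. On $E_2$ this product is the Bredon cup product together with the coefficient pairing $\underline{\mathcal{H}}^q\otimes\underline{\mathcal{H}}^{q'}\to\underline{\mathcal{H}}^{q+q'}$, and it is compatible with the product on the abutment.

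Alternatively, I could cite the construction of Davis–L\"uck \cite{DL98} (see also \cite[Section 3.2]{degrijse2023proper}), which is how the paper itself introduces this spectral sequence.
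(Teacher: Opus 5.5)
Your construction is correct in substance. The paper gives no argument for this statement; it simply recalls the spectral sequence from \cite{DL98} and \cite[Section 3.2]{degrijse2023proper}. Your skeletal-filtration exact couple, with $d_1$ identified as the Bredon coboundary and products obtained from a cellular approximation of the diagonal, is the standard construction behind that citation.

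\textbf{The bidegree claim is wrong.} You say the usual bookkeeping gives the stated bidegree $(r,r-1)$. It does not. In your couple, $d_r(x)$ is computed by lifting the image of $x\in\mathcal{H}^{p+q}(X_p,X_{p-1})$ in $\mathcal{H}^{p+q}(X_p)$ to $\mathcal{H}^{p+q}(X_{p+r-1})$. One then applies the coboundary into $\mathcal{H}^{p+q+1}(X_{p+r},X_{p+r-1})=E_1^{p+r,\,q-r+1}$. So $d_r\colon E_r^{p,q}\to E_r^{p+r,\,q-r+1}$ has bidegree $(r,1-r)$, as it must, since it raises the total degree $p+q$ by one. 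The $(r,r-1)$ in the statement is a misprint. It is the right bidegree only after re-indexing as $E_2^{p,q}=H^p_G(X;\underline{\mathcal{H}}^{-q})\Rightarrow\mathcal{H}^{p-q}(X)$, i.e.\ in homotopy grading. You should have flagged it rather than confirmed it.

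\textbf{Two smaller imprecisions.}

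First, the product step. For discrete $G$, the space $X\times X$ with diagonal action is already an honest proper $G$-CW-complex: each $G/H\times G/K$ splits into orbits $G/(H\cap{}^gK)$, and $(X\times X)^H=X^H\times X^H$ with the product skeleta. So equivariant cellular approximation is needed only for $\Delta$, and it can be carried out on fixed points. No $G\times G$-equivariant theory is involved: the diagonal external product is simply $\mathrm{pr}_1^\ast a\cup \mathrm{pr}_2^\ast b$.

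Second, convergence. With $D_1^{p,q}=\mathcal{H}^{p+q}(X_p)$, the formal target of your couple is $\lim_p\mathcal{H}^\ast(X_p)$, not $\mathcal{H}^\ast(X)$. Boardman's conditional convergence to $\mathcal{H}^\ast(X)$ is a condition on the couple $A^p=\mathcal{H}^\ast(X,X_{p-1})$, which has the same pages. The condition is that $\lim_p A^p$ and $\lim^1_p A^p$ both vanish. This follows from additivity via the telescope/Milnor argument, but it is not literally the Milnor sequence for $\mathcal{H}^\ast(X)$, as your wording suggests.
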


As a consequence, we obtain the following result. Before stating it, we introduce some notation. 

\begin{notation}
Let $G$ be a group and let $\mathcal{F}$ be a family of subgroups of $G$. We write $\mathcal{O}_\mathcal{F}(G)$ for the orbit category whose objects are $G$-sets of the form $G/H$ for $H\in\mathcal{F}$, and whose morphisms are $G$-equivariant maps. For families such as $\Fin(G)$, we simply write $\mathcal{O}_{\Fin}(G)$ and $\mathcal{O}_{\mathcal{E}}(G)$ to avoid cumbersome notation.
\end{notation}

\begin{proposition}\label{prop-F-i}
    Let $k$ be a commutative ring and $G$ be a group with a finite model for $\underline{E}G$. Then  restriction 
    \[
    H^*(G;k)\to \lim_{\mathcal{O}_{\Fin}(G)^{\mathrm{op}}} H^*(F;k)
    \]
    is an $\mathcal{N}$-isomorphism.
\end{proposition}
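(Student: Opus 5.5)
We propose to use the equivariant Atiyah--Hirzebruch spectral sequence of \cref{thm-Atiyah-Hirzebruch}, applied to $X=\underline{E}G$ and to the multiplicative $G$-equivariant cohomology theory $\mathcal{H}^\ast(Y)=H^\ast(EG\times_G Y;k)$, i.e.\ Borel cohomology with coefficients in $k$. Since $\underline{E}G$ is contractible and $EG\times\underline{E}G$ is a free contractible $G$-space, we have $\mathcal{H}^\ast(\underline{E}G)\cong H^\ast(G;k)$. For an orbit $G/H$ with $H$ finite, we have $\mathcal{H}^q(G/H)=H^q(EG\times_G G/H;k)\cong H^q(H;k)$. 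Hence the coefficient system $\underline{\mathcal{H}}^q$ is the functor $\mathcal{O}_{\Fin}(G)^{\mathrm{op}}\to \mathrm{Ab}$, $G/H\mapsto H^q(H;k)$, with morphisms acting by restrictions and conjugations, and the spectral sequence reads
\[
E_2^{p,q}=H^p_G(\underline{E}G;\underline{H^q(-;k)})\Rightarrow H^{p+q}(G;k).
\]
Because $\underline{E}G$ is a model for the classifying space of the family of finite subgroups, Bredon cohomology of $\underline{E}G$ computes derived functors of the limit over $\mathcal{O}_{\Fin}(G)^{\mathrm{op}}$. In particular $E_2^{0,q}=\lim_{\mathcal{O}_{\Fin}(G)^{\mathrm{op}}}H^q(F;k)$, and the edge morphism $H^\ast(G;k)\to E_\infty^{0,\ast}\subseteq E_2^{0,\ast}$ is exactly the restriction map in the statement. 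We must check this identification of the edge map, i.e.\ that it is induced by the maps $G/F\to \underline{E}G$ of the $0$-skeleton.

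Next, let $d$ be the dimension of the finite model, so $E_2^{p,q}=0$ for $p>d$ (or $p<0$). Thus the spectral sequence has finitely many columns, it collapses at $E_{d+1}$, and it converges strongly. The filtration on $H^n(G;k)$ has length at most $d+1$. The kernel of the edge map is $F^1H^\ast(G;k)$, and multiplicativity gives $F^iF^j\subseteq F^{i+j}$. So any product of $d+1$ elements of the kernel lies in $F^{d+1}=0$, and every element $x$ of the kernel satisfies $x^{d+1}=0$. This gives uniform nilpotence of the kernel.

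For power surjectivity, we show that every $b\in E_2^{0,\ast}$ has a power which is a permanent cycle. We follow Quillen's argument: $E_2$ is a graded-commutative algebra and the differentials are derivations. If $b$ survives to $E_r$, then $d_r(b^{m})=m\,b^{m-1}d_r(b)$. To kill these terms in general we need torsion control, so we work with suitable powers. If $k$ contains $\mathbb{Q}$ or is $\mathbb{F}_p$-like, taking $m=p$ works. For general $k$ this requires more care. Using the finiteness of the model, each $E_2^{p,q}$ with $p\geq 1$ is annihilated by a fixed integer $N$. Indeed, the Bredon cochain complex is built from finitely many cells $G/H_i$, and positive-degree Bredon cohomology is killed by transfer, so we can take $N=\prod|H_i|$. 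This is the step we expect to be the main obstacle. Granting it, standard arguments (as in \cite[Proposition 3.24]{mathew2019derived} or Quillen's stratification) show that $b^{N^{d}}$, suitably iterated with $p$-th power tricks prime by prime, survives each of the at most $d$ nonzero differentials. Since only finitely many pages carry differentials, a uniform exponent suffices, and we conclude that the edge map is power surjective. Hence it is an $\mathcal{N}$-isomorphism.
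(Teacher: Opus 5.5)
Your route is the paper's route: the Atiyah--Hirzebruch (Bredon) spectral sequence for Borel cohomology on a finite model $X$ of $\underline{E}G$, with $E_2^{0,\ast}$ identified with $\lim_{\mathcal{O}_{\Fin}(G)^{\mathrm{op}}}H^\ast(F;k)$. Finitely many columns give $x^{d+1}=0$ on the kernel, and torsion on the $E_2$-page feeds the power argument of \cite[Theorem 3.21]{mathew2019derived}. Your nilpotence half is fine.

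The step you flagged is false as stated, however: $E_2^{p,q}$ is not $N$-torsion for all $p\geq 1$. The row $q=0$ has the constant coefficient system $\underline{\mathcal{H}}^0=\underline{k}$, so $E_2^{p,0}\cong H^p(X/G;k)$. For $G=\mathbb{Z}$ and $X=\mathbb{R}$ every isotropy group is trivial, so your $N=\prod|H_i|$ equals $1$. Yet $E_2^{1,0}=H^1(S^1;k)=k$, which is exactly what carries $H^1(\mathbb{Z};k)$. The identity $\mathrm{tr}\circ\mathrm{res}=|F|$ kills $H^q(F;k)$ only for $q\geq 1$, where it factors through $H^q(e;k)=0$; it says nothing about $q=0$. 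The paper's proof asserts the same torsion statement for $H^i_G(X,\underline{\mathcal{H}}^\ast)$, $i>0$. So you do not diverge from it here, but both arguments need the following correction.

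What is true is that every row $q\geq 1$ is $N$-torsion in every column. The coefficient system $F\mapsto H^q(F;k)$ is killed by $|F|$, and the Bredon cochains of a finite model involve only the finitely many isotropy groups $H_i$. Replace $N$ by $2N$ so that $N\geq 2$. Then arrange that every differential you must kill lands in a positive row.

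Elements of $E_2^{0,0}$ are permanent cycles, since $d_r$ lands in row $1-r<0$. Now let $y\in E_r^{0,q}$ with $q\geq 1$ even. A nonzero $d_r(y)$ forces $r\leq q+1$. Then $d_r(y^N)=N\,y^{N-1}d_r(y)$ lies in $E_r^{r,Nq-r+1}$ with $Nq-r+1\geq (N-1)q\geq 1$, an $N$-torsion group, hence $d_r(y^N)=0$. If $q$ is odd, $d_r(y^2)=0$ by graded commutativity.

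Only the pages $2\leq r\leq d$ can support differentials out of column $0$, and each $E_{r+1}^{0,\ast}$ is a subring. Hence $b^{(2N)^{d}}$ is a permanent cycle for every homogeneous $b\in E_2^{0,\ast}$. This gives uniform power surjectivity with no prime-by-prime $p$-th power tricks and no case distinctions on $k$.
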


\begin{proof}
Recall that  the Atiyah-Hirzebruch spectral sequence for cohomology  is a multiplicative spectral sequence; see \cref{thm-Atiyah-Hirzebruch}. Moreover, in our case it is bounded since we are assuming that $G$ has a finite-dimensional model, that is, $E^{i,j}_2=0$ for $i>\mathrm{dim}(\underline{E}G)$. The edge morphism $H^\ast(G;k)\rightarrow H_{G}^0(X,\underline{\mathcal{H}}^\ast)$ is then an $\mathcal{N}$-isomorphism; see the arguments in \cite[Theorem 3.21]{mathew2019derived}.
Note that 
$H_{G}^0(X,\underline{\mathcal{H}}^\ast)$ can be identified with the limit $\lim_{\mathcal{O}_{\Fin}(G)^{\mathrm{op}}} H^*(F;k)$ by \cite[Section 3.1]{mathew2019derived}; and $H_{G}^i(X,\underline{\mathcal{H}}^\ast)$ are torsion abelian groups for $i>0$ (one can take the least common multiple of the orders of $F\in \Fin(G)$). Indeed, the $G$-space $\underline{E}G$  is simply $E_{\Fin(G)}$ in their notation. 
\end{proof}

\begin{proposition}\label{prop-fg-limitF}
       Let $k$ be a commutative Noetherian ring  and $G$ be a group with a finite model for $\underline{E}G$. Then 
       \[ {B}\coloneqq\lim_{\mathcal{O}_{\Fin}(G)^{\mathrm{op}}} H^\ast(F;k)
       \]
       is a finitely generated $k$--algebra, and $H^\ast(K;k)$ is a finite ${B}$-module via the projection 
       \[
       {B}\subseteq \prod_{F\in \Fin(G)/G} H^*(F;k)\to H^\ast(K;k)
       \]
       for any $K\in \Fin(G)$. 
\end{proposition}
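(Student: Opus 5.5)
The plan is to reduce everything to the classical Evens–Venkov finite generation theorem for finite groups over a Noetherian base, and then run a standard invariant-theory style argument for the limit. By \cref{rec-conjugacy-abelem} there are finitely many $G$-conjugacy classes of finite subgroups. Fix a set of representatives $F_1,\dots,F_r$. Consider
\[
P\coloneqq\prod_{j=1}^r H^\ast(F_j;k).
\]
By Evens–Venkov, each $H^\ast(F_j;k)$ is a finitely generated $k$-algebra. Hence $P$ is a finitely generated $k$-algebra, and in particular a Noetherian ring. The limit ${B}$ over $\mathcal{O}_{\Fin}(G)^{\mathrm{op}}$ embeds into $P$ as the subring of compatible families: tuples $(x_j)$ invariant under the conjugation action of $N_G(F_j)/C$ on $H^\ast(F_j;k)$, and compatible under restrictions along subconjugations $F_i\leq {}^gF_j$.

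The key step is to produce a finitely generated $k$-subalgebra $A\subseteq {B}$ over which $P$ is a finite module. Once this is done, ${B}$ is an $A$-submodule of the finite $A$-module $P$. Since $A$ is Noetherian, ${B}$ is then a finite $A$-module, hence a finitely generated $k$-algebra. The finiteness of each $H^\ast(K;k)$ over ${B}$ also follows at once: $K$ is conjugate to some $F_j$, and $H^\ast(F_j;k)$ is a quotient of the finite $A$-module $P$ via the projection.

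To construct $A$, I would use Chern classes, following Venkov's argument. Since there are only finitely many conjugacy classes of finite subgroups, one could embed $G$'s finite subgroups compatibly into a unitary group; but an embedding of $G$ itself need not exist. Instead I would use the norm (Evens) construction. Choose homogeneous generators $y$ of each $H^\ast(F_j;k)$ over $k$, and consider the polynomial
\[
\prod(T-\text{conjugates/restrictions})
\]
built from the finitely many images of $y$ under the morphisms of the finite diagram. More concretely, the restriction of ${B}$ to the finite full subcategory on the $G/F_j$ is a finite limit, where the morphism sets $\Hom(G/F_i,G/F_j)$ are finite modulo the action of $F_j$. The coefficients of the resulting monic polynomials are symmetric functions and should assemble to elements of ${B}$; here one needs commutativity up to sign, so I would square to land in even degrees.

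The main obstacle is making these coefficients genuinely compatible across all orbit-category morphisms, not just invariant at each vertex separately. What I would try first is the alternative route via the $\mathcal{N}$-isomorphism of \cref{prop-F-i}. First pick a uniform power: the relevant torsion is bounded by the exponent $e=\mathrm{lcm}|F_j|$, and the Atiyah–Hirzebruch differentials are bounded in number. For each generator $y$ of $P$, I would then show that some power $y^{N}$, with $N$ depending on $e$ and $\dim\underline{E}G$, extends to an element of ${B}$ after multiplying by transfer-type corrections. The lifted elements make $P$ integral over the subalgebra they generate, and finite generation of $P$ makes it a finite module over it.

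If this proves too delicate, I would fall back on Quillen–Venkov style elements. These are obtained by pulling back Chern classes of the permutation representations $k[G/F_j]$ restricted to each $F_i$, organised using the finite $G$-CW structure. Such classes are automatically compatible with conjugation and restriction because they come from a single $G$-equivariant bundle over the finite model $\underline{E}G$. They are also algebra generators up to finiteness over each $H^\ast(F_j;k)$ by Venkov's theorem.
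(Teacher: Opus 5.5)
Your reduction matches the paper's: once there is a finitely generated $k$-subalgebra $A\subseteq B$ over which $P=\prod_j H^\ast(F_j;k)$ is finite, $B$ is a finite $A$-module because $A$ is Noetherian, and both claims follow. But producing $A$ is the entire content of the statement, and none of your three routes does it.

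The norm route stops exactly at the compatibility problem you flag yourself.

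The second route rests on a false claim. \cref{prop-F-i} concerns $H^\ast(G;k)\to B$ and says nothing about $B\subseteq P$. Moreover, powers of elements of a factor $H^\ast(F_j;k)$ need not occur as coordinates of elements of $B$. Already for $G$ finite, where $B\cong H^\ast(G;k)$, restriction to a subgroup is not power surjective. Take $G=D_8$, $k=\mathbb{F}_2$, and $H$ a Klein four subgroup containing two order-two subgroups that are conjugate in $G$ but not in $H$. A class $y\in H^1(H;\mathbb{F}_2)$ restricting nontrivially to one of them and trivially to the other has no power in the image of $\res^G_H$. What you need is integrality of $P$ over $A$, not power-liftability.

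The third route is not well defined. For infinite $G$ the modules $k[G/F_j]$ are infinite-dimensional and are not complex representations, so they have no Chern classes. The $G$-equivariant bundle over $\underline{E}G$ you invoke is never constructed.

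The missing idea is the one you set aside: no embedding of $G$ is needed, because an element of $B$ is just a compatible family. It suffices to choose faithful representations $\rho_F\colon F\to U(N)$, $F\in\Fin(G)$, such that $\rho_F\circ c_g\cong\rho_{F'}$ for every subconjugation $c_g\colon F'\to F$ coming from $\mathcal{O}_{\Fin}(G)$.

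The paper takes $N=\mathrm{lcm}\{|F|\}$, which is finite because there are only finitely many conjugacy classes of finite subgroups. It sets $\rho_F=\frac{N}{|F|}\rho_{\mathrm{reg}}$, the linearization of a free $F$-set with $N$ elements. Pulling a free $F$-set back along an injective homomorphism $F'\to F$ gives a free $F'$-set with $N$ elements, hence an isomorphic representation. So $c_g^\ast\circ\rho_F^\ast=\rho_{F'}^\ast$, and the maps $\rho_F^\ast\colon H^\ast(BU(N);k)\to H^\ast(F;k)$ form a cone, defining a ring map $H^\ast(BU(N);k)\to B$. Venkov's theorem, valid for Noetherian $k$, makes each $H^\ast(F;k)$ a finite $H^\ast(BU(N);k)$-module. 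The image of $H^\ast(BU(N);k)$ in $B$ is the $A$ you were looking for.
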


\begin{proof}
 Let $N=\textrm{lcm}(|F| \mid [F]\in \Fin(G)/G)$. For each $F\in \Fin(G)$, consider the faithful complex representation $\rho_F\colon F\hookrightarrow U(N)$ given by 
 \[
 \rho_F\coloneqq\bigoplus_{\frac{N}{|F|}}\rho_{reg}
 \]
 where $\rho_{reg}$ is the complex regular representation of $F$, $\rho_{reg}\colon F\hookrightarrow U(|F|)$. That is, induced by a free $F$-action on a disjoint union of $\frac{N}{|F|}$ copies of $F$. Since $\rho_F$ is a subgroup inclusion, the induced map on cohomology 
 \[
 \rho_F^*\colon H^*(BU(N);k)\rightarrow H^*(F;k)
 \]
 exhibits $H^*(F;k)$ as a finitely generated  $H^*(BU(N);k)$-module, that is, $\rho_F^\ast$ is a Noetherian map\footnote{A map of rings $f\colon A\to B$ is Noetherian if $B$ is a Noetherian $A$-module along $f$.}. Furthermore, the restriction and conjugation of a free action is again free, therefore the corresponding representations are conjugate. We conclude that all these morphisms are compatible with morphisms induced from $\mathcal{O}_{\Fin}(G)$; and all $\rho_F^*$ assemble to give a morphism
\[
\rho^*\colon H^*(BU(N);k)\rightarrow B\subseteq  \prod_{F\in \Fin(G)/G} H^*(F;k).
\]
Recall that $H^*(BU(N);k)$ is a finitely generated (Noetherian) $k$-algebra. Since $\prod_{F\in \Fin(G)/G} H^*(F;k)$ is finitely generated over $H^*(BU(N);k)$, so is ${B}$. We conclude that ${B}$ must be a finitely generated $k$-algebra, and the projection ${B}\to H^\ast(F;k)$ is a Noetherian map since both $\rho^*$ and $\rho_F^*$ are Noetherian.
\end{proof}

\begin{remark}
    Note that the conclusion of \cref{prop-fg-limitF} holds more generally. For an arbitrary group $G$, we may consider a family of subgroups $\mathcal{F}$ with finitely many objects up to $G$-conjugacy. The same arguments then yield a finitely generated $k$-algebra $B$. For example, this applies to locally finite Artinian groups with respect to the family of elementary abelian subgroups.
\end{remark}

We now have all the ingredients we need to prove the main result of this section.  

\begin{proof}[Proof of \cref{thm-cfg}]
   By \cref{prop-F-i} we have that the restriction map 
    \[
    \varphi\colon H^\ast(G;k) \to \lim_{\mathcal{O}_{\Fin}(G)^{\mathrm{op}}} H^\ast(F;k)\eqqcolon {B}
    \]
    is an $\mathcal{N}$-isomorphism. Moreover, \cref{prop-fg-limitF} gives us that ${B}$ is a finitely generated $k$-algebra. Note that $\mathrm{im}(\varphi)$ is a finitely generated $k$-algebra as well. Indeed, this follows by the Artin-Tate Lemma; since $\varphi$ is power surjective, it follows that it is an integral extension (see \cite[Lemma 10.13]{gomez2025notes}), that is, $B$ is integral over $\mathrm{im}(\varphi)$. The claim follows by  \cite[Lemma 10.15]{gomez2025notes}.  Now, chose a finite set of generators $\{x_1,\ldots, x_n\}$ of $\mathrm{im}(\varphi)$. Let $y_1,\ldots,y_n$ in $ H^\ast(BG;k) $ such that $\varphi(y_i)=x_i$, for $1\leq i\leq n$. Then the subalgebra ${A}$ of $H^\ast(BG;k)$ spanned by the  $y_1,\ldots, y_n$ is also a finitely generated $k$--algebra. In fact, note that the map $A\subseteq H^\ast(BG;k)\xrightarrow[]{\varphi} B$ is an $\mathcal{N}$-isomorphism as well. 
    
    Now, consider the module category $\Mod_{C^\ast(BG;k)}$. Recall that the graded endomorphisms ring of the monoidal unit in this category is the cohomology ring $H^\ast(BG;k)$. In particular, $\Mod_{C^\ast(BG;k)}$ is a $H^\ast(BG;k)$--linear category. Hence we can also consider $\Mod_{C^\ast(BG;k)}$ as a ${A}$--linear category via the inclusion ${A}\subseteq H^\ast(BG;k)$. We claim that if $M$ is in 
\[
\mathrm{Thick}\left(\mathrm{Res}^{F}_G(C^\ast(BF;k))\mid F\in \Fin(G)  \right)
\]
then 
$\pi_\ast \Hom_G(C^\ast(BG;k),M)$
is a finitely generated ${A}$--module. Note that this property is a thick property. Hence it is enough to check it for $\mathrm{Res}^{F}_G(C^\ast(BF;k))$. Indeed, 
 we have  
    \[
    \Hom_G(C^\ast(BG;k),\mathrm{Res}^{F}_G(C^\ast(BF;k)))\simeq \Hom_F(C^\ast(BF;k),C^\ast(BF;k)) 
    \]
and the homotopy groups of the latter are $H^*(F;k)$. Then, it is a finitely generated ${A}$--module since the action of $A$ factors through $B$ and the  projection 
$
{B}\to H^\ast(F;k)
$
is a Noetherian map; see \cref{prop-fg-limitF}. Hence the claim follows. 

We conclude by \cref{prop:findim model}, and \cref{eq-thick}, since it gives us that $C^\ast(BG;k)$ is in the thick subcategory of  $\Mod_{C^\ast(BG;k)}$ generated by the modules $\mathrm{Res}^{F}_G(C^\ast(BF;k))$ for $F\in \Fin(G)$.
\end{proof}

\begin{remark}
   Note that the same arguments in the proof of \cref{thm-cfg} could be adapted to a more general ring spectrum $R$ and a group $G$ admitting a finite model for $\underline{E}G$ satisfying the following properties: 
    \begin{enumerate} 
       \item $\pi_\ast C^\ast(BH;R)$ has a $k$-algebra structure for a fixed commutative Noetherian ring $k$ for any $H\in \Fin(G)$;
        \item $\pi_\ast C^\ast(BH;R)$ is finitely generated $k$-algebra for any $H\in \Fin(G)$; 
        \item for any inclusion $H\leq K$, the restriction $\pi_\ast C^\ast(BK;R) \to \pi_\ast C^\ast(BH;R)$ is a Noetherian map.
    \end{enumerate}
      For instance, this is the case if $R = E_n$ is the Lubin
      Tate theory of height $n$ and prime $p$; see \cite[Lemma 7.1]{BCHNP}.
\end{remark}


\section{Stratification of the module category}\label{sec-cohomological}
We now bring together the results from the previous sections - which may until now have seemed somewhat disjoint - to address the classification of localizing ideals of the module category in terms of the homogeneous spectrum of the  cohomology. In a nutshell, both \cref{thm-weakly} and \cref{thm-cfg} are the key ingredients for applying descent techniques in the study of stratification: \cref{thm-cfg} provides control over $\Spec^h(H^*(BG;k))$ via the cohomology of subgroups, while \cref{thm-weakly} allows for the reconstruction of modules via conservative functors. This approach is motivated by ideas developed in \cite{BCHV}.

Moreover, in the sequel we discuss further tt-geometric consequences of our weak descendability result \cref{thm-weakly}, such as homological stratification and the Noetherianity of the Balmer spectrum.

We assume familiarity with BIK stratification, support, and cosupport theory; see \cite{BIK,BIKcolocalizing}.


\subsection{Cohomological stratification mod $p$} We begin by making explicit the following consequence of \cref{thm-weakly}, which, together with \cite{BCHS}, yields cohomological stratification for all $p$-local compact groups.

\begin{theorem}\label{thm-chouinard-mod-p}
    Let $G$ be locally finite Artinian group and $k$ be a field of positive characteristic $p$. Then both families
    \[
    \left(\mathrm{Ind}_G^E, \mathrm{CoInd}_G^E\colon \Mod_{C^\ast(BG;k)})\to \Mod_{C^\ast(BE;k)}\right)_{E\in \mathcal{E}_p(G)}
    \] 
    are jointly conservative. 
\end{theorem}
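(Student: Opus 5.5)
The plan is to deduce the statement from \cref{thm-weakly} together with \cref{prop-discrete-ptora-defect} and \cref{weakly des are conservative}.

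First we need a stable finite-dimensional model for $\underline{E}G$. By \cref{rec: structure l.f. Artinian group}, a locally finite Artinian group $G$ is countable. Hence it is either finitely generated, in which case it is finite, or it has $\aleph$-rank $0$. By \cref{ex-locally finite}, $G$ then admits an unstable model for $\underline{E}G$ of dimension at most $1$, namely a tree built from the filtered union $G=\cup_n G_n$. Its suspension spectrum gives a finite-dimensional stable model, as in the recollection on stable models.

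Second, \cref{prop-discrete-ptora-defect} shows that $k$ has a $G$-finite derived defect base $\mathcal{F}\subseteq\mathcal{E}_p(G)$. Applying \cref{thm-weakly}, the functor $\mathrm{Ind}_\mathcal{F}$ is weakly descendable. By \cref{weakly des are conservative}, the families $(\mathrm{Ind}_G^E)_{E\in\mathcal{F}}$ and $(\mathrm{CoInd}_G^E)_{E\in\mathcal{F}}$ are each jointly conservative. Here we use that the right adjoint of $\mathrm{Res}$ is coinduction, and that the product functor splits componentwise.

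Finally, we pass from $\mathcal{F}$ to all of $\mathcal{E}_p(G)$. Since $\mathcal{F}\subseteq\mathcal{E}_p(G)$, enlarging the family preserves joint conservativity. Concretely, if $\mathrm{Ind}_G^E M\simeq 0$ for every $E\in\mathcal{E}_p(G)$, then in particular this holds for every $E\in\mathcal{F}$, so $M\simeq 0$. The same argument applies to coinduction.

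The only delicate point is confirming that countability gives the dimension bound in the first step. That bound is exactly what \cref{ex-locally finite} provides, so no real obstacle remains; all the substantive work already sits in \cref{prop-discrete-ptora-defect}.
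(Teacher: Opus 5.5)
Your argument is correct and follows the paper's route: the paper's proof cites \cref{coro:chouinard in the field case}, which is \cref{thm-weakly} combined with \cref{weakly des are conservative}, together with \cref{prop-discrete-ptora-defect}, and then enlarges the family from $\mathcal{F}$ to $\mathcal{E}_p(G)$ exactly as you do. Your explicit check that countability gives a $1$-dimensional model for $\underline{E}G$ via \cref{ex-locally finite} supplies the finite-dimensionality hypothesis of \cref{thm-weakly}, which the paper uses but leaves implicit.
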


\begin{proof}
   This follows from \cref{coro:chouinard in the field case}, since $k$ has a $G$-finite derived defect base contained in $\mathcal{E}_p(G)$ by \cref{prop-discrete-ptora-defect}.
\end{proof}

\begin{theorem}\label{thm-costratification-discrete}
Let $\mathcal{G}$ be a $p$-local compact group with a discrete $p$-toral Sylow subgroup $S$. Then $\Mod_{C^*(B\mathcal{G};\mathbb{F}_p)}$ is both stratified and costratified by the canonical action of $H^*(B\mathcal{G};\mathbb{F}_p)$.    
\end{theorem}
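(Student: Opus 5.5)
The strategy is to reduce to the discrete $p$-toral Sylow subgroup $S$ and then invoke the descent framework of \cite{BCHV}. That paper already establishes the strong Quillen-type stratification of $\Spec^h(H^*(B\mathcal{G};\mathbb{F}_p))$ in terms of elementary abelian subgroups of $S$. It also shows that, for these rings, (co)stratification of $\Mod_{C^*(B\mathcal{G};\mathbb{F}_p)}$ is equivalent to the joint conservativity of the families $(\mathrm{Ind}_S^E,\mathrm{CoInd}_S^E)_{E\in\mathcal{E}_p(S)}$ on $\Mod_{C^*(BS;\mathbb{F}_p)}$, as recalled in the introduction. So the only missing input is a Chouinard theorem for $S$, and the plan is to supply it from \cref{thm-chouinard-mod-p}.

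\textbf{Step 1.} Observe that a discrete $p$-toral group $S$, an extension of a finite $p$-group by $(\mathbb{Z}/p^\infty)^r$, is a locally finite Artinian group. By \cref{thm-chouinard-mod-p} with $k=\mathbb{F}_p$, both families $\mathrm{Ind}_S^E$ and $\mathrm{CoInd}_S^E$, for $E\in\mathcal{E}_p(S)$, are jointly conservative on $\Mod_{C^*(BS;\mathbb{F}_p)}$. More strongly, \cref{thm-weakly} gives weak descendability, and hence the localizing and colocalizing generation statements of \cref{weakly des are conservative}.

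\textbf{Step 2.} Pass from $S$ to $\mathcal{G}$. Here I would use that $C^*(B\mathcal{G};\mathbb{F}_p)\to C^*(BS;\mathbb{F}_p)$ makes restriction/induction weakly descendable, or at least conservative. In \cite{BCHV} this comes from the stable retraction of $\Sigma^\infty B\mathcal{G}$ off $\Sigma^\infty BS$ given by the characteristic idempotent, which exhibits $C^*(B\mathcal{G};\mathbb{F}_p)$ as a retract of $C^*(BS;\mathbb{F}_p)$ as modules. Composing the two descendable functors shows that $\mathrm{Ind}$ to $\prod_E\Mod_{C^*(BE;\mathbb{F}_p)}$ is weakly descendable from $\Mod_{C^*(B\mathcal{G};\mathbb{F}_p)}$. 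Thick/localizing closure arguments with \cref{weakly des are conservative} handle composition.

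\textbf{Step 3.} Feed this into the descent theorem of \cite{BCHV}, see also \cite{BCHS}. Each $\Mod_{C^*(BE;\mathbb{F}_p)}$ is stratified and costratified by $H^*(BE;\mathbb{F}_p)$, as for finite groups \cite{BK08,BIK11b}. The map $H^*(B\mathcal{G};\mathbb{F}_p)\to\prod_E H^*(BE;\mathbb{F}_p)$ is finite, by Noetherianity of $H^*(B\mathcal{G};\mathbb{F}_p)$ and the Quillen stratification in \cite{BCHV}. With these inputs, joint conservativity of induction yields stratification, and joint conservativity of coinduction yields costratification, via the BIK local-to-global and minimality transfer. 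The minimality condition at each prime $\mathfrak{p}$ descends because $\mathfrak{p}$ lifts to some $E$, and the BIK (co)support is compatible with $\mathrm{Ind}$ and $\mathrm{CoInd}$.

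I expect Step 2 to be the main obstacle: making precise that the Sylow comparison for a $p$-local compact group is compatible with both induction and coinduction. The colocalizing statement needed for costratification is the less formal half. I would try first to cite the corresponding reduction in \cite{BCHV} directly, and otherwise derive it from the retraction via the projection formula.
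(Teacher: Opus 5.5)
Your proposal is correct and follows essentially the paper's proof: the paper says the arguments of \cite[Theorems 5.12 and 5.14]{BCHV} carry over verbatim, because BCHV assumed $S$ geometric only to secure Chouinard's condition, and that condition now follows from \cref{thm-chouinard-mod-p} since $S$ is locally finite Artinian. Your Step~1 is exactly this new input. Your Steps~2--3 reconstruct machinery already in \cite{BCHV}, so the obstacle you flag in Step~2 is handled by citing that paper, as you propose to do first.
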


\begin{proof}
   The proofs of \cite[Theorems 5.12 and 5.14]{BCHV} carry over to our setting. Indeed, the assumption that $S$ is \textit{geometric} is used to ensure that $S$ satisfies Chouinard's condition. However, by \cref{thm-chouinard-mod-p}, Chouinard's condition holds for an arbitrary locally finite Artinian $p$-group, allowing us to remove the geometricity assumption on $S$.
\end{proof}

Similarly, we can remove the geometricity assumption on $S$ from \cite[Proposition 5.13]{BCHV}. 

\begin{corollary}\label{coro-stratification-discrete}
    Let $S$ be a locally finite Artinian $p$-group. Then $\Mod_{C^*(BS;\mathbb{F}_p)}$ is both stratified and costratified by the canonical action of $H^*(BS;\mathbb{F}_p)$. 
\end{corollary}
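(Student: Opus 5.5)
The plan is to follow the proof of \cref{thm-costratification-discrete} verbatim, specialised to the case where the fusion system is trivial, i.e. $\mathcal{G}=S$ regarded as a $p$-local compact group over itself. Concretely, a locally finite Artinian $p$-group $S$ is a discrete $p$-toral group: it is an extension of a finite $p$-group by $(\mathbb{Z}/p^\infty)^{r}$ by \cref{rec: structure l.f. Artinian group}. Hence $(S,\mathcal{F}_S(S),\mathcal{L}_S^c(S))$ is a $p$-local compact group with $B\mathcal{G}\simeq BS^\wedge_p$. Since $C^\ast(-;\mathbb{F}_p)$ is insensitive to $p$-completion, $\Mod_{C^\ast(B\mathcal{G};\mathbb{F}_p)}\simeq \Mod_{C^\ast(BS;\mathbb{F}_p)}$, compatibly with the $H^\ast(BS;\mathbb{F}_p)$-action. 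Thus the corollary could simply be deduced from \cref{thm-costratification-discrete}. I would nevertheless spell out the ingredients of \cite[Proposition 5.13]{BCHV} to make clear that geometricity enters only through Chouinard's condition.

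The first input is Noetherianity of $H^\ast(BS;\mathbb{F}_p)$, which is \cref{prop:cohomology locally finite Artinian}. The second is a strong Quillen stratification of $\Spec^h(H^\ast(BS;\mathbb{F}_p))$ in terms of the finitely many $S$-conjugacy classes of elementary abelian subgroups (\cref{rec: structure l.f. Artinian group}). I would take this from \cite{BCHV}, where it is proved for discrete $p$-toral groups without any geometricity hypothesis, via the $\mathcal{N}$-isomorphism $H^\ast(BS)\to \lim_{\mathcal{O}_{\mathcal{E}}(S)^{\mathrm{op}}}H^\ast(BE)$ and \cref{f-iso induces homeo}. The third input is stratification and costratification of each $\Mod_{C^\ast(BE;\mathbb{F}_p)}$ for $E$ elementary abelian, which is classical since $C^\ast(BE;\mathbb{F}_p)$ is formal/polynomial-type (see \cite{BIK, BCHV}). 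The fourth is joint conservativity of both $(\mathrm{Ind}^E_S)_E$ and $(\mathrm{CoInd}^E_S)_E$, which is exactly \cref{thm-chouinard-mod-p}.

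With these in hand, the descent argument of \cite[Section 5]{BCHV} (equivalently the descent results of \cite{BCHS} for stratification along conservative families) transfers stratification from the $E$'s to $S$. The local-to-global principle holds automatically since $H^\ast(BS)$ is Noetherian. Minimality at each prime $\mathfrak{p}$ is checked after inducing to an $E$ whose image in $\Spec^h$ contains $\mathfrak{p}$, using the Quillen stratification to control which $E$ see which $\mathfrak{p}$ and using conservativity of induction. Costratification is handled dually using conservativity of coinduction and cosupport.

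The main obstacle is to confirm that the proofs in \cite{BCHV} really use geometricity of $S$ only through Chouinard's condition, and in particular that the Quillen stratification step does not secretly require it. If it did, I would instead derive that step from the $\mathcal{N}$-isomorphism obtained from \cref{prop-discrete-ptora-defect} and the weak descendability of \cref{thm-weakly}.
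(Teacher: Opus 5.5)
Your proposal is correct and takes the same route as the paper, whose proof is just the observation that the argument of \cite[Proposition 5.13]{BCHV} goes through once the geometricity hypothesis is dropped. Geometricity was used there only to secure Chouinard's condition for both induction and coinduction, and \cref{thm-chouinard-mod-p} now provides that condition for every locally finite Artinian $p$-group. Your side remark is also consistent with this: the statement is the special case $\mathcal{G}=S$ of \cref{thm-costratification-discrete}, via $C^\ast(BS^\wedge_p;\mathbb{F}_p)\simeq C^\ast(BS;\mathbb{F}_p)$, and both results rest on the same Chouinard input for $S$.
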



\subsection{Cohomological stratification} In this subsection, we show that the module category $\Mod_{C^\ast(BG;k)}$ is BIK-stratified for groups with a finite model for $\underline{E}G$ and $k$ a commutative Noetherian ring.

 \begin{notation}
     Let $f\colon R\to S$ be a map of commutative ring spectra. We write $\varphi_f$ to denote  the induced map on homogeneous prime ideal spectra
     \[
     \varphi_{f}\coloneqq \Spec^h(f)\colon \Spec^h(\pi_*(S))\xrightarrow[]{\Spec^h(f)} \Spec^h(\pi_*(R)).
     \]
      We simply write  $\varphi$ when there is no confusion.
\end{notation}

Let us establish some terminology following \cite[Section 2.1]{BCHV_Noetherian}.

\begin{definition} 
    Let $f\colon R\to S$ be a map of commutative Noetherian ring spectra. We say that $f$ satisfies \textit{simple Quillen lifting} if for any $R$--module $M$, the following equalities hold:
    \[
     \varphi^{-1} \varphi \supp_S(f^\ast M)= \supp_S (f^\ast M), \mbox{ and }\]
    \[  \varphi^{-1}  \varphi \cosupp_S(f^! M)= \cosupp_S (f^! M).
    \]
\end{definition}

Our interest in simple Quillen lifting stems from \cite[Theorem 2.14]{BCHV_Noetherian}, which relates this condition to stratification. We now give a criterion for detecting simple Quillen lifting.

\begin{proposition}\label{prop-simple-quillen}
     Let $I$ be a finite set and $f=\prod f_i\colon R \to S = \prod_{i \in I}S_i$ be a map of commutative Noetherian ring spectra with $\Mod_{S_i}$ stratified by $\pi_\ast S_i$, for all $i \in I$.  Assume that:
       \begin{itemize}
           \item For any $\pp\in \Spec^h(\pi_\ast S_i)$ and $\qq\in \Spec^h(\pi_\ast S_j)$ such that $\varphi_{f}(\pp)=\varphi_{f}(\qq)$, there are $k\in I$ and maps of ring spectra $f_{k,i}\colon S_i\to S_k$ and $f_{k,j}\colon S_j\to S_k$ (compatible with $R$) with $\varphi_{f_{k,i}}^{-1}(\mathfrak{p})\cap \varphi_{f_{k,j}}^{-1}(\mathfrak{q})\neq \varnothing$.
       \end{itemize}
Then $f$ satisfies simple Quillen lifting. 
\end{proposition}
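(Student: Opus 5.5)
We will prove that for every $M\in \Mod_R$ the support $\supp_S(f^\ast M)$ is a union of fibres of $\varphi=\varphi_f$; the cosupport statement is handled by the same argument with $f^!$, cosupport and costratification in place of $f^\ast$, support and stratification. Since $S=\prod_{i\in I}S_i$ with $I$ finite, we have $\Mod_S\simeq\prod_i\Mod_{S_i}$ and $\Spec^h(\pi_\ast S)=\bigsqcup_i\Spec^h(\pi_\ast S_i)$. Under these identifications $\supp_S(f^\ast M)=\bigsqcup_i\supp_{S_i}(f_i^\ast M)$, because $f_i^\ast M$ is the $i$-th component of $f^\ast M$. So it suffices to fix $\pp\in\supp_{S_i}(f_i^\ast M)$ and $\qq\in\Spec^h(\pi_\ast S_j)$ with $\varphi_f(\pp)=\varphi_f(\qq)$, and show $\qq\in\supp_{S_j}(f_j^\ast M)$.

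The key tool will be the behaviour of BIK-support under base change along a map $g\colon A\to B$ of Noetherian ring spectra. For $N\in\Mod_A$ we expect
\[
\supp_B(g^\ast N)\subseteq \varphi_g^{-1}\supp_A(N),
\]
and, when $\Mod_B$ is stratified (in particular has the local-to-global principle and minimality), also the reverse inclusion
\[
\supp_B(g^\ast N)\supseteq \varphi_g^{-1}\supp_A(N)\cap \supp_B(B).
\]
To prove the second inclusion, take $\mathfrak r\in\Spec^h(\pi_\ast B)$ with $\varphi_g(\mathfrak r)\in\supp_A(N)$. Then $g^\ast(\Gamma_{\varphi_g(\mathfrak r)}N)\simeq\Gamma_{\varphi_g(\mathfrak r)}(g^\ast N)$ by compatibility of Koszul objects with base change. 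Tensoring further with $\Gamma_{\mathfrak r}B$ and using that $\kappa(\mathfrak r)\otimes_B g^\ast\kappa(\varphi_g\mathfrak r)$ is nonzero, we should get $\Gamma_{\mathfrak r}g^\ast N\neq 0$. I expect to cite a lemma of \cite{BCHV_Noetherian} for this rather than reprove it.

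With this in hand the argument runs as follows. Let $k$, $f_{k,i}$, $f_{k,j}$ be as in the hypothesis, and choose $\mathfrak r\in\varphi_{f_{k,i}}^{-1}(\pp)\cap\varphi_{f_{k,j}}^{-1}(\qq)$. Compatibility with $R$ means $f_{k,i}\circ f_i\simeq f_k\simeq f_{k,j}\circ f_j$ as ring maps from $R$, so
\[
f_{k,i}^\ast f_i^\ast M\simeq f_k^\ast M\simeq f_{k,j}^\ast f_j^\ast M.
\]
Since $\pp\in\supp(f_i^\ast M)$ and $\varphi_{f_{k,i}}(\mathfrak r)=\pp$, the reverse inclusion (using that $\Mod_{S_k}$ is stratified) gives $\mathfrak r\in\supp_{S_k}(f_k^\ast M)$. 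Here $\mathfrak r\in\supp_{S_k}(S_k)$ holds because supports of Noetherian rings are the whole homogeneous spectrum. Then the forward inclusion applied to $f_{k,j}$ gives $\qq=\varphi_{f_{k,j}}(\mathfrak r)\in\supp_{S_j}(f_j^\ast M)$, as required.

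For cosupport, the same chain uses $f^!$, the identities $f_{k,i}^!f_i^!\simeq f_k^!$, and the analogous inclusions for cosupport under coinduction; the reverse inclusion there will need costratification or, failing that, the stratification of $\Mod_{S_k}$ combined with the duality $\cosupp\,\mathrm{Hom}(\kappa,-)$ criterion. The main obstacle is this reverse (detection) inclusion for base change, especially its cosupport version. I would first try to deduce it from the stratification of $\Mod_{S_k}$ via minimality of $\Gamma_{\mathfrak r}\Mod_{S_k}$, and only then fall back on citing the relevant lemma of \cite{BCHV_Noetherian}.
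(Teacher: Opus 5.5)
Your argument is essentially the paper's: reduce to components, pass through a point $\mathfrak r\in\varphi_{f_{k,i}}^{-1}(\pp)\cap\varphi_{f_{k,j}}^{-1}(\qq)$, and combine $f_{k,i}^\ast f_i^\ast M\simeq f_k^\ast M\simeq f_{k,j}^\ast f_j^\ast M$ with the Avrunin--Scott identities. The only difference is that you use just the detection half along $f_{k,i}$ and the always-valid inclusion along $f_{k,j}$, likewise with $f^!$ for cosupport. One correction: that detection half comes from stratification of the source $\Mod_{S_i}$, not of $\Mod_{S_k}$, since minimality in the target cannot show that a particular object is nonzero. Minimality of $\Gamma_{\pp}\Mod_{S_i}$ gives $\Gamma_{\pp}S_i\in\mathrm{Loc}(\Gamma_{\pp}f_i^\ast M)$, so $0\neq\Gamma_{\mathfrak r}S_k\simeq\Gamma_{\mathfrak r}S_k\otimes_{S_i}\Gamma_{\pp}S_i\in\mathrm{Loc}(\Gamma_{\mathfrak r}f_k^\ast M)$; dually, the nonzero object $\mathrm{Res}\,\Gamma_{\mathfrak r}S_k$ generates $\Gamma_{\pp}\Mod_{S_i}$, which gives the cosupport version with no costratification. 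This is harmless here, because every $\Mod_{S_i}$ is assumed stratified.
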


\begin{proof}
   This follows verbatim from the proof of \cite[Proposition 2.17]{BCHV_Noetherian} by noting that the Avrunin–Scott identities hold for $f_{k,i}\colon S_i\to S_k$ with the assumption that $\Mod_{S_i}$ is stratified (by \cite[Corollary 14.19]{BCHS}) for all $i$. We include the proof for completeness.

   The inclusion $\supp_S (f^*(M))\subseteq \varphi_{f}^{-1}\varphi_{f}(f^*(M))$ is always satisfied. We can also assume that $\supp_S (f^*(M))$ is non-empty, otherwise the equality is also satisfied. We need to proof that given $\qq,\mathfrak{r}\in \Spec^h(\pi_*(S))$ such that $\varphi_{f}(\qq)=\varphi_{f}(\mathfrak{r})$ with $\mathfrak{r}\in \supp_{S_i}(f_i^*M)$ for some $i$, there is $j\in I$ such that $\mathfrak{q}\in \supp_{S_j}(f_j^*M)$.
   
    Assume $\mathfrak{q}\in \Spec^h(\pi_*(S_j))$. By hypothesis, there are $k\in I$ and maps of ring spectra $f_{k,i}\colon S_i\to S_k$ and $f_{k,j}\colon S_j\to S_k$ (compatible with $R$, that is, $f_{k,i}\circ f_i\simeq f_k$ and $f_{k,j}\circ f_j\simeq f_k$) with $\varphi_{f_{k,i}}^{-1}(\mathfrak{r})\cap \varphi_{f_{k,j}}^{-1}(\mathfrak{q})\neq \varnothing$. Let $\pp\in \varphi_{f_{k,i}}^{-1}(\mathfrak{r})\cap \varphi_{f_{k,j}}^{-1}(\mathfrak{q})\subseteq \Spec^h(\pi_*(S_k))$.

Since the categories $\Mod_{S_i}$ are stratified for all $i\in I$, we have Avrunin-Scott identities $\varphi_{f_{k,i}}^{-1}(\supp(f_i^*M))=\supp((f_{k,i})^*(f_i^*M))$ and the same holds for $f_{k,j}$; see \cite[Proposition 3.14]{BCHV}. Then,  
\[
 \varphi_{f_{k,i}}^{-1}(\supp(f_i^*M))=\supp((f_{k,i})^*(f_i^*M))=\supp((f_{k,j})^*(f_j^*M))= \varphi_{f_{k,j}}^{-1}(\supp(f_j^*M))
\]
where the middle equality holds since $(f_{k,i})^*(f_i^*M)=f_k^*M=(f_{k,j})^*(f_j^*M)$.

Since $\mathfrak{r}\in \supp_{S_i}(f_i^*M)$, then $\pp \in \varphi_{f_{k,i}}^{-1}(\supp(f_i^*M))=\varphi_{f_{k,j}}^{-1}(\supp(f_j^*M))$. Finally, $\mathfrak{q}=\varphi_{f_{k,j}}(\pp) \in \supp(f_j^*M)$.

The same argument is used with cosupport.
\end{proof}

\begin{remark}\label{remark:colim of Spech}
    Let $I$ be the set of objects of a finite category $\mathcal{I}$, and $\S \colon \mathcal{I}\to \textrm{CAlg(Sp)}$ be a functor. For each $i\in I$, we have a commutative ring spectrum $\S(i)\eqqcolon S_i$. Suppose we have a map of commutative ring spectra $f\colon R\to \lim_{\mathcal I} \mathcal{S}\subseteq \prod_{i\in I} S_i$. This data induces 
    \[
    \varphi_{f}\colon \bigsqcup_{i\in I} \Spec^h(\pi_*(S_i))\stackrel{\pi}{\to}\textrm{colim}_{\mathcal I} \Spec^h(\pi_*(S_i))\to \Spec^h(\pi_*(R)).
    \] 
\end{remark}

\begin{corollary}\label{colim-simple-quillen}
    Let $I$ be a finite set and $f=\prod f_i\colon R \to S = \prod_{i \in I}S_i$ be a map of commutative Noetherian ring spectra, and  $\Mod_{S_i}$ cohomologically stratified by $\pi_\ast S_i$, for all $i \in I$.  Under the assumptions on \cref{remark:colim of Spech}, if 
    \[
    \textrm{colim}_{\mathcal I} \Spec^h(\pi_*(S_i))\to \Spec^h(\pi_\ast (R))
    \]
    is injective, then  $f$ satisfies simple Quillen lifting.
\end{corollary}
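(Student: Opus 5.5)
The plan is to deduce the statement from \cref{prop-simple-quillen} by checking its single hypothesis. First, each $\Mod_{S_i}$ is stratified by $\pi_\ast S_i$: this is exactly the assumption that it is cohomologically stratified, so the stratification input of \cref{prop-simple-quillen} holds.

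It remains to produce, for $\pp\in\Spec^h(\pi_\ast S_i)$ and $\qq\in\Spec^h(\pi_\ast S_j)$ with $\varphi_f(\pp)=\varphi_f(\qq)$, an index $k$ and maps $f_{k,i}\colon S_i\to S_k$, $f_{k,j}\colon S_j\to S_k$ compatible with $R$ such that $\varphi_{f_{k,i}}^{-1}(\pp)\cap\varphi_{f_{k,j}}^{-1}(\qq)\neq\varnothing$. By \cref{remark:colim of Spech}, $\varphi_f$ factors through the quotient $\pi\colon\bigsqcup_i\Spec^h(\pi_\ast S_i)\to\operatorname{colim}_{\mathcal I}\Spec^h(\pi_\ast S_i)$. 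Injectivity of the second map then gives $\pi(\pp)=\pi(\qq)$. The colimit of topological spaces over $\mathcal I$ is the quotient of the disjoint union by the equivalence relation generated by $x\sim \Spec^h(\S(\alpha))(x)$ for morphisms $\alpha$ of $\mathcal I$. The maps $\S(\alpha)\colon S_a\to S_b$ are maps of ring spectra, and they are compatible with $R$ because $f$ lands in $\lim_{\mathcal I}\S$. Hence $\pp$ and $\qq$ are joined by a finite zigzag of points, with consecutive points related by some $\varphi_{\S(\alpha)}$.

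The main obstacle is that the hypothesis of \cref{prop-simple-quillen} asks for a single \emph{cospan} $S_i\to S_k\leftarrow S_j$ with a common preimage point, whereas the colimit only provides a zigzag. Two remedies are available.

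\textbf{First remedy: argue directly with the zigzag.} Inspect the proof of \cref{prop-simple-quillen}. Its conclusion, that $\qq\in\supp(f_j^\ast M)$ whenever $\pp\in\supp(f_i^\ast M)$, is transitive. So it suffices to verify it for one elementary step $x\in\Spec^h(\pi_\ast S_a)$, $y=\varphi_{\S(\alpha)}(x)\in\Spec^h(\pi_\ast S_b)$ with $\alpha\colon a\to b$. Such a step is a degenerate cospan: take $k=b$, $f_{b,b}=\mathrm{id}$, $f_{b,a}=\S(\alpha)$, and use the common point $x$. The Avrunin--Scott identity for $\S(\alpha)$ (available since $\Mod_{S_a}$ is stratified, via \cite[Proposition 3.14]{BCHV}) together with $\S(\alpha)^\ast f_a^\ast M\simeq f_b^\ast M$ gives
\[
x\in\supp(f_a^\ast M)\iff y\in\supp(f_b^\ast M).
\]
Running along the zigzag in both directions transports membership from $\pp$ to $\qq$. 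The identical argument with $f^!$ and the Avrunin--Scott identity for cosupport handles cosupport.

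\textbf{Second remedy: collapse the zigzag into a cospan.} This works if $\mathcal I$ has enough pushout-like objects, but such an assumption is not available in general.

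I would therefore commit to the first remedy. It gives simple Quillen lifting: for every $R$-module $M$,
\[
\varphi^{-1}\varphi\supp_S(f^\ast M)=\supp_S(f^\ast M)
\quad\text{and}\quad
\varphi^{-1}\varphi\cosupp_S(f^!M)=\cosupp_S(f^!M).
\]
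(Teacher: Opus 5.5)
Your first remedy is correct and is essentially the paper's proof: the paper also uses injectivity to join the two primes by a zigzag in the colimit, then transports support (and cosupport) membership along it one cospan at a time via the Avrunin--Scott argument from the proof of \cref{prop-simple-quillen}; your single-arrow steps are just degenerate cospans. One small slip: for $\alpha\colon a\to b$ the map $\varphi_{\S(\alpha)}$ goes from $\Spec^h(\pi_\ast S_b)$ to $\Spec^h(\pi_\ast S_a)$, so $x$ should lie over $S_b$ and $y$ over $S_a$, and your displayed equivalence should read $x\in\supp(f_b^\ast M)\iff y\in\supp(f_a^\ast M)$.
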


\begin{proof}
    Let $\mathfrak{r}\in \Spec^h(\pi_\ast S)$ and $\qq\in \Spec^h(\pi_\ast S)$ such that $\varphi_{f}(\mathfrak{r})=\varphi_{f}(\qq)$. Then $\pi(\pp)=\pi(\qq)$, that is, both primes are identified in the colimit. Precisely, that means that they are connected by a zig-zag
    \begin{center}
\xymatrix{
 & \mathfrak{p}_1 \ar@{->}[ld] \ar@{->}[rd] &  & \mathfrak{p}_2 \ar@{->}[ld] \ar@{->}[rd] &  & \cdots \ar@{->}[ld] \ar@{->}[rd] &  & \mathfrak{p}_n \ar@{->}[ld] \ar@{->}[rd] &  \\
\mathfrak{q}_0 &  & \mathfrak{q}_1 &  & \mathfrak{q}_2 &  & \mathfrak{q}_{n-1} &  & \mathfrak{q}_n
}
  \end{center}  
  with $\mathfrak{r}=\mathfrak{q}_0$, and $\mathfrak{q}_{n}=\mathfrak{q}$, $\mathfrak{q_{i}}\in \Spec^h(\pi_*(S_{k_i}))$, and $\mathfrak{p_{i}}\in \Spec^h(\pi_*(S_{l_i}))$, and morphisms induced from the functor $\S$. 

  The goal is to check that simple Quillen lifting is satisfied. That is, given $R$--module $M$, the following equalities hold:
    \[
    \varphi_{f}^{-1} \varphi_{f} \supp_S(f^\ast M)= \supp_S (f^\ast M), \mbox{ and }\]
    \[ \varphi_{f}^{-1} \varphi_{f} \cosupp_S(f^! M)= \cosupp_S (f^! M).
    \]
    As in the proof of \cref{prop-simple-quillen}, it reduce to check that if $\mathfrak{r}\in \supp(f^*M)$ then the same holds for $\mathfrak{q}$.
    Note that if $\varphi_{f}(\mathfrak{q}_0)=\mathfrak{p}$, then the same holds for any $\mathfrak{q}_i$ in the diagram. By \cref{prop-simple-quillen}, we have that if $\mathfrak{r}\in \supp(f_{k_0}^*M)$ then $\mathfrak{q}_1\in \supp(f_{k_1}^*M)$. Applying \cref{prop-simple-quillen} again, we finally have that $\mathfrak{q}=\mathfrak{q}_n\in \supp(f_{k_n}^*M)$. The same argument is applied to cosupport.
\end{proof}

In particular, we can apply the previous result to deduce simple Quillen lifting for modules over $C^\ast(BG;k)$. We need some preparation. 

\begin{remark}
\label{lemma-Orb-finite}
    Let $G$ be a group admitting a finite model for $\underline{E}G$, and $k$ be a commutative Noetherian ring. Let $\mathcal{A}_{\Fin}(G)$ denote the category of finite subgroups of $G$ with morphisms being group homomorphisms induced by inclusions and conjugations in $G$ (that is, as a subcategory of the category of groups and group morphisms). This category is also known as the fusion-orbit category. 
    
    Note that $\mathcal{A}_{\Fin}(G)$ is equivalent to a finite category since $G$ has finitely many conjugacy classes of finite subgroups, and there is a finite set of morphisms between any two objects of $\mathcal{A}_{\Fin}(G)$. 
    
Moreover, there is a functor $\mathcal{O}_{\Fin}(G)\to \mathcal{A}_{\Fin}(G)$ which, on objects, sends the coset $G/F$ to $F$, and, on morphisms $[g]\colon G/F\to G/F'$, sends $[g]$ to the corresponding subconjugation $c_g\colon F \to {}^gF\leq F'$. Note that this functor induces an isomorphism
  \[
  \Hom_{\mathcal{O}_{\Fin}(G)}(G/F,G/F')/C_G(F)\cong \Hom_{\mathcal{A}_{\Fin}(G)}(F,F').
  \]
  Moreover, since morphisms in the orbit category given by elements in $C_G(F)$ induce the identity in cohomology with trivial coefficients, this functor induces an isomorphism  
    \[
    \lim_{{\mathcal{A}_{\Fin}}(G)^{\textrm{op}}}H^\ast(F;k) \stackrel{\cong}{\to} \lim_{\mathcal{O}_{\Fin}(G)^{\textrm{op}}}H^\ast(F;k).
    \]
\end{remark}

\begin{corollary}\label{coro-simplequillen-G}
   Let $G$ be a group admitting a finite model for $\underline{E}G$, and $k$ be a commutative Noetherian ring. Consider the ring map
    \[
    \mathrm{Ind}_{\Fin(G)/G}\colon {C^\ast(BG;k)}\to \prod_{(F)\in \Fin(G)/G}{C^\ast(BF;k)}.
    \]
    Then $\mathrm{Ind}_{\Fin(G)/G}$ satisfies simple Quillen lifting. 
\end{corollary}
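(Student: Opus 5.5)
The plan is to apply \cref{colim-simple-quillen} with the diagram indexed by (a skeleton of) the fusion-orbit category $\mathcal{A}_{\Fin}(G)$ from \cref{lemma-Orb-finite}. That category is equivalent to a finite category, so we choose a finite set $I$ of representatives of $\Fin(G)/G$. We take the functor $\S\colon \mathcal{A}_{\Fin}(G)^{\mathrm{op}}\to \CAlg(\Sp)$ given by $F\mapsto C^\ast(BF;k)$. A morphism $F\to F'$ of $\mathcal{A}_{\Fin}(G)$, i.e.\ a subconjugation $c_g$, goes to the ring map $C^\ast(BF';k)\to C^\ast(BF;k)$.

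The map $\mathrm{Ind}_{\Fin(G)/G}$ factors through $\lim \S\subseteq\prod_{F\in I}C^\ast(BF;k)$, because the compositions $F\to {}^gF\leq G$ are conjugate to the inclusions in $G$, and inner automorphisms act trivially. This places us in the setting of \cref{remark:colim of Spech}.

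Next we check the hypotheses. Each $C^\ast(BF;k)$, for $F$ finite and $k$ Noetherian, is Noetherian, since $H^\ast(F;k)$ is finitely generated by Evens. Its module category is cohomologically stratified by $H^\ast(F;k)$; we take this from the known finite-group case (Benson--Iyengar--Krause, and its extension to Noetherian $k$ by Barthel et al.), which we assume is available in the literature. By \cref{thm-cfg}, $C^\ast(BG;k)$ is Noetherian as well.

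It remains to prove that
\[
\mathrm{colim}_{\mathcal{A}_{\Fin}(G)}\Spec^h(H^\ast(F;k))\to \Spec^h(H^\ast(G;k))
\]
is injective; we expect this to be the main step. We factor the map as
\[
\mathrm{colim}\,\Spec^h(H^\ast(F;k))\to \Spec^h(B)\to\Spec^h(H^\ast(G;k)),
\]
where $B=\lim_{\mathcal{O}_{\Fin}(G)^{\mathrm{op}}}H^\ast(F;k)\cong\lim_{\mathcal{A}_{\Fin}(G)^{\mathrm{op}}}H^\ast(F;k)$ by \cref{lemma-Orb-finite}. The second map is a homeomorphism by \cref{prop-F-i} together with \cref{f-iso induces homeo}. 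So it suffices to show that the first map is injective, indeed bijective.

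For this we use the standard Quillen-type argument for finite limits of rings with finite transition maps. By \cref{prop-fg-limitF}, each $H^\ast(F;k)$ is a finite $B$-module. Finiteness gives surjectivity and going-up. For injectivity we follow Quillen's proof for the limit over elementary abelian subgroups, applying it to the finite diagram $F\mapsto H^\ast(F;k)$. Suppose $\pp\subset H^\ast(F;k)$ and $\qq\subset H^\ast(F';k)$ have the same contraction to $B$. Since each $F$ is finite in $G$, we pass to the product $\prod_{F\in I}H^\ast(F;k)$, which is integral over $B$. We then argue via invariants: the ring $B$ is $\mathcal{N}$-isomorphic to the equalizer, and primes of the product lying over the same prime of $B$ are related through the equivalence relation generated by the structure maps (lying-over for invariant-type limits, \cite[Section 3]{mathew2019derived}). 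If that route resists, the alternative is to prove injectivity directly: for $\pp,\qq$ with equal image in $\Spec^h(H^\ast(G;k))$, produce a common finite subgroup connected to both by a zig-zag. We would do this by mimicking Quillen's stratification, with minimal subgroups attached to primes, via the $U(N)$ embedding used in \cref{prop-fg-limitF}.

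Once injectivity is established, \cref{colim-simple-quillen} gives simple Quillen lifting.
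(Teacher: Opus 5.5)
Your proposal is correct and follows the paper's proof. You apply \cref{colim-simple-quillen} over (a skeleton of) $\mathcal{A}_{\Fin}(G)$, you get $\Spec^h(H^\ast(G;k))\cong\Spec^h(B)$ from \cref{prop-F-i} and \cref{f-iso induces homeo}, and you get bijectivity of $\mathrm{colim}\,\Spec^h(H^\ast(F;k))\to\Spec^h(B)$ from the finiteness in \cref{prop-fg-limitF} plus Quillen's argument. The injectivity step you hedge on is exactly the general lemma \cite[Lemma 8.11]{Qui71} for finite diagrams of rings integral over their limit, which the paper cites directly, so the fallback via minimal subgroups and the $U(N)$ embedding is not needed.
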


\begin{proof}
   We will verify the conditions of \cref{colim-simple-quillen}. Recall that $C^\ast(BG;k)$ is Noetherian by \cref{thm-cfg}. Now,   
   by \cite[Lemma 8.11]{Qui71}, \cref{prop-fg-limitF} and \cref{lemma-Orb-finite}, we obtain that 
   \[
   \Spec^h\left(\lim_{{\mathcal{A}_{\Fin}}(G)^{\textrm{op}}}H^\ast(F;k)\right)\cong \underset{\mathcal{A}_{\Fin}(G)^{\textrm{op}}}{\mathrm{colim}} \Spec^h(H^\ast(F;k)). 
   \] 
   Moreover, \cref{f-iso induces homeo} applied to \cref{lemma-Orb-finite} and \cref{prop-F-i} gives us a homeomorphism 
   \[
   \Spec^h(H^\ast(G;k))\cong \underset{\mathcal{A}_{\Fin}(G)^{\textrm{op}}}{\mathrm{colim}} \Spec^h(H^\ast(F;k))
   \] 
Note that this homeomorphism is induced by $\mathrm{Ind}_{\Fin(G)/G}$. Thus, we are under the hypotheses of \cref{colim-simple-quillen}, and the result follows.
\end{proof}


The last ingredient that we need in order to prove cohomological stratification of the module category $\Mod_{C^\ast(BG;k)}$ is the following.

\begin{recollection}\label{rec-stratifaciton-finitecase}
    Let $G$ be a finite group, and $k$ be a commutative Noetherian ring. Then we have  
    \[
    \Mod_{C^\ast(BG;k)}\simeq \mathrm{Loc}_{ \Mod_{{\mathrm{Sp}_G}}(\underline{k}_G)}(\underline{k}_G) 
    \]
   by Morita theory; see  \cref{rec-Morita theory for R_G}. On the other hand, the category $\Mod_{{\mathrm{Sp}_G}}(\underline{k}_G)$ corresponds to the so called category of representations $\mathrm{Rep}(G,k)$ which was introduced in \cite[Section 5]{barthel2025lattices}. In particular, this category is cohomologically stratified  by $H^\ast(G;k)$ \cite[Theorem 9.1]{barthel2025lattices} (see also \cite[Section 5]{Gom25}). Finally, by \cite{benson2011localising}, we obtain that $\Mod_{C^\ast(BG;k)}$ is cohomological stratified by $H^\ast(G;k)$.
\end{recollection}

We now are ready to prove the main result of this section.  

\begin{theorem}\label{thm-cohomologica-stra}
Let $k$ be a commutative Noetherian ring and $G$ be a group admitting a finite model for $\underline{E}G$. Then $\Mod_{C^\ast(BG;k)}$ is cohomologically stratified by the canonical action of $H^\ast(BG;k)$.
\end{theorem}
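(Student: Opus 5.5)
We will deduce the statement from the descent criterion \cite[Theorem 2.14]{BCHV_Noetherian}, applied to the ring map
\[
f=\mathrm{Ind}_{\Fin(G)/G}\colon C^\ast(BG;k)\to S\coloneqq \prod_{(F)\in \Fin(G)/G} C^\ast(BF;k).
\]
Recall the shape of that criterion. Suppose $f\colon R\to S$ is a map of commutative Noetherian ring spectra with $\Mod_S$ stratified. Suppose also that $f^\ast$ and $f^!$ are conservative, or better weakly descendable, and that $f$ satisfies simple Quillen lifting. Then $\Mod_R$ is stratified by $\pi_\ast R$, and the map on homogeneous spectra identifies the supports.

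We first check the standing hypotheses. $C^\ast(BG;k)$ is Noetherian by \cref{thm-cfg}. The index set $\Fin(G)/G$ is finite by \cref{rk:finite model finite conjugacy classes}, so $S$ is a finite product. Hence $\Mod_S\simeq\prod_F\Mod_{C^\ast(BF;k)}$, and $\Spec^h(\pi_\ast S)$ is the finite disjoint union of the $\Spec^h(H^\ast(F;k))$. Each factor $C^\ast(BF;k)$ is Noetherian by Evens' theorem and is stratified by \cref{rec-stratifaciton-finitecase}. Since stratification of a finite product is componentwise, $\Mod_S$ is stratified by $\pi_\ast S$.

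Next we supply the descent input. By \cref{ex-EG-finite}, $k$ has a $G$-finite derived defect base $\mathcal{F}$, and we may take $\mathcal{F}\subseteq\Fin(G)$. \cref{thm-weakly} then shows $\mathrm{Ind}_{\mathcal{F}}$ is weakly descendable. Alternatively, \cref{prop:findim model} together with \eqref{eq-thick} gives directly
\[
C^\ast(BG;k)\in \mathrm{Thick}\bigl(\mathrm{Res}^F_G C^\ast(BF;k)\mid F\in\Fin(G)\bigr).
\]
By conjugation invariance, as in the proof of \cref{thm-weakly}, it is enough to let $F$ run over $\Fin(G)/G$. So $f$ is weakly descendable, and by \cref{weakly des are conservative} both $f^\ast$ and $f^!$ are conservative and satisfy the reconstruction statements. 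Simple Quillen lifting for $f$ is exactly \cref{coro-simplequillen-G}.

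We then combine these facts as in \cite[Theorem 2.14]{BCHV_Noetherian}, or equivalently by the argument of \cite[Theorems 5.12, 5.14]{BCHV}.
\begin{enumerate}
\item Local-to-global: weak descendability expresses any $M$ through the $f_\ast f^\ast M$, and the local-to-global principle holds for $S$.
\item Minimality at $\mathfrak p\in\Spec^h H^\ast(G;k)$: take nonzero $M,N\in\Gamma_{\mathfrak p}\Mod_{C^\ast(BG;k)}$.
\begin{itemize}
\item By conservativity, $f^\ast M\neq0$.
\item By simple Quillen lifting, $\supp_S f^\ast M=\varphi^{-1}(\mathfrak p)$, since $\varphi$ is surjective onto the relevant fibre; the same holds for $N$.
\item Stratification of $S$ gives $f^\ast N\in\mathrm{Loc}(f^\ast M)$, hence $f_\ast f^\ast N\in\mathrm{Loc}(f_\ast f^\ast M)$.
\item By the projection formula, $f_\ast f^\ast M\simeq M\otimes f_\ast S$ lies in $\mathrm{Loc}(M)$.
\item Weak descendability gives $N\in\mathrm{Loc}(f_\ast f^\ast N)$, so $N\in\mathrm{Loc}(M)$.
\end{itemize}
\end{enumerate}

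The main obstacle is matching the hypotheses of \cite[Theorem 2.14]{BCHV_Noetherian} precisely. In particular, we must check that the homeomorphism
\[
\Spec^h H^\ast(G;k)\cong\operatorname{colim}\Spec^h H^\ast(F;k)
\]
from the proof of \cref{coro-simplequillen-G} makes $\varphi$ surjective. This is needed so that every prime in $\supp M$ lifts. Surjectivity should follow from the $\mathcal N$-isomorphism of \cref{prop-F-i}: a power surjective map induces a surjection on $\Spec^h$.
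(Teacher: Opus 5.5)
Your proposal is correct and is essentially the paper's proof: both apply \cite[Theorem 2.14]{BCHV_Noetherian} to $\mathrm{Ind}_{\Fin(G)/G}$, using \cref{thm-cfg} for Noetherianity, stratification of the finite-group factors, biconservativity from weak descendability (\cref{thm-weakly}), and simple Quillen lifting from \cref{coro-simplequillen-G}. One small correction to your closing remark: power surjectivity gives injectivity on $\Spec^h$, and surjectivity instead comes from the nilpotent kernel together with lying over (for $\varphi$ itself it comes from the colimit description in \cref{coro-simplequillen-G}); you do not need surjectivity of $\varphi$ anyway, because $f^\ast M\neq 0$ and local-to-global for $S$ already make $\supp_S f^\ast M$ a nonempty subset of $\varphi^{-1}(\mathfrak p)$.
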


\begin{proof}
 For a finite group $F$, the module category $\Mod_{C^\ast(BF;k)}$ is cohomologically stratified by $H^\ast(F;k)$; see \cref{rec-stratifaciton-finitecase}. On the other hand, by \cref{lemma-Orb-finite} applied to \cref{coro-simplequillen-G} we get that  
  \[
    \mathrm{Ind}_{\Fin(G)/G}\colon {C^\ast(BG;k)}\to \prod_{(F)\in \Fin(G)/G}{C^\ast(BF;k)}
    \]
    is a map of commutative Noetherian ring spectra that 
    satisfies simple Quillen lifting. Moreover, $\mathrm{Ind}_{\Fin(G)/G}$ is biconservative by \cref{thm-weakly}. We conclude the result by \cite[Theorem 2.14]{BCHV_Noetherian}.
\end{proof}

\subsection{Further consequences of \cref{thm-weakly}}\label{sec-homological}
In this section, we record two direct consequences of our weak descendability result, \cref{thm-weakly}: one concerning the topology of the tt-spectrum of $\Mod_{C^\ast(BG;R)}^\omega$, and another concerning the classification of localizing subcategories of the module category $\Mod_{C^\ast(BG;R)}$ in terms of the so-called homological spectrum.

The first consequence is the following.

\begin{corollary}\label{prop-balmerspectrum-noe}
    Let $G$ be a discrete group with a stable finite-dimensional model $X$ for $\underline{E}G$, and $R$ be a commutative ring spectrum. Assume that $R$ has a \textit{$G$-finite derived defect base} $\mathcal{F}$. If $\Mod_{C^\ast(BH;R)}^\omega$ has Noetherian Balmer spectrum for each $H\in \mathcal{F}$, then so does $\Mod_{C^\ast(BG;R)}^\omega$.
\end{corollary}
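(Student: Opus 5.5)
The strategy is to combine \cref{thm-weakly} with the known descent result for weakly descendable geometric functors, which says that the induced map on Balmer spectra of compacts is surjective. Concretely, by \cref{thm-weakly} the geometric functor
\[
\mathrm{Ind}_\mathcal{F}\colon \Mod_{C^\ast(BG;R)}\to \prod_{K\in \mathcal{F}}\Mod_{C^\ast(BK;R)}
\]
is weakly descendable. We first note that the target is rigidly-compactly generated and its compact part is $\prod_{K\in\mathcal{F}}\Mod^\omega_{C^\ast(BK;R)}$. Since $\mathcal{F}$ is a set of representatives of finitely many $G$-conjugacy classes, this is a \emph{finite} product. Its Balmer spectrum is therefore the finite disjoint union $\bigsqcup_{K\in\mathcal{F}}\Spc(\Mod^\omega_{C^\ast(BK;R)})$. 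A finite disjoint union of Noetherian spaces is Noetherian, so the source of the induced map
\[
\Spc(\mathrm{Ind}_\mathcal{F})\colon \bigsqcup_{K\in\mathcal{F}}\Spc(\Mod^\omega_{C^\ast(BK;R)})\to \Spc(\Mod^\omega_{C^\ast(BG;R)})
\]
is Noetherian by hypothesis.

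Next we invoke the surjectivity theorem for weakly descendable functors. In \cite{barthel2024homological} (see also \cite{BCHS}), weak descendability implies that the map on homological spectra is surjective, and hence so is the map on Balmer spectra. Since the comparison map $\Spc^h\to\Spc$ is surjective for rigidly-compactly generated categories, it suffices to use surjectivity on homological spectra. If needed, this can be rederived from the facts in \cref{weakly des are conservative}: joint conservativity on the big category, together with the projection formula, detects every homological prime.

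Finally we apply a point-set fact: the continuous surjective image of a Noetherian space is Noetherian. Indeed, any open subset $U$ of the target pulls back to an open subset of the source. The preimage is quasi-compact, and $U$ is its image under the map, so $U$ is quasi-compact. Every open subset being quasi-compact is exactly the Noetherian condition. This yields that $\Spc(\Mod^\omega_{C^\ast(BG;R)})$ is Noetherian.

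The main obstacle is the surjectivity step. We must make sure that the cited theorem applies to a weakly descendable functor into a finite product, and not only to one that is descendable in the thick sense. If that reference only gives surjectivity on homological spectra, we will pass to Balmer spectra via the surjection $\phi\colon\Spc^h\to\Spc$. The naturality of $\phi$ with respect to geometric functors then ensures that the two maps are compatible.
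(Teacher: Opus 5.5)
Your proposal is correct and has the same skeleton as the paper's proof. That skeleton is: weak descendability from \cref{thm-weakly}, then surjectivity of $\Spc(\mathrm{Ind}_\mathcal{F})$, then the fact that a continuous surjective image of a Noetherian space is Noetherian. Your quasi-compactness argument for that last step is fine.

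The difference lies in how surjectivity is obtained. The paper uses only the conservativity of $\mathrm{Ind}_\mathcal{F}$, which follows from weak descendability by \cref{weakly des are conservative}. It then applies the standard telescope argument that a conservative geometric functor detects $\otimes$-nilpotence of morphisms with compact source. Concretely, reduce to $\alpha\colon \mathbb{1}\to z$. If $\mathrm{Ind}_\mathcal{F}(\alpha)=0$, then $\mathrm{Ind}_\mathcal{F}$ kills $\mathrm{colim}_n\, z^{\otimes n}$, so this colimit vanishes, and compactness of the unit forces $\alpha^{\otimes n}=0$ for some $n$. Finally the paper invokes Balmer's surjectivity theorem for $\Spc$ directly.

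You instead pass through homological spectra. Your route needs three inputs: surjectivity of $\Spc^h(\mathrm{Ind}_\mathcal{F})$, surjectivity of the comparison map $\Spc^h\to\Spc$ for the source category, and naturality of that comparison map. This works, but it is heavier. In the literature, surjectivity on $\Spc^h$ is itself usually deduced from exactly this nilpotence detection via Balmer's criterion, so you detour through a stronger statement. What the detour buys is the extra fact that $\Spc^h(\mathrm{Ind}_\mathcal{F})$ is surjective, which is the kind of input relevant to \cref{coro-homologicalstra}. Your fallback remark that joint conservativity ``detects every homological prime'' is vague; the telescope argument above is the clean replacement, and it makes the homological spectrum unnecessary here.

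One point in your favour: you state explicitly that $\mathcal{F}$ is finite. This makes the target a finite product, whose compact part is the product of compact parts and whose spectrum is a finite disjoint union. The paper leaves this implicit, but it is genuinely needed, since an infinite disjoint union of nonempty spaces is not quasi-compact.
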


\begin{proof}
  By \cref{thm-weakly}, the geometric functor 
  \[
  \mathrm{Ind}_\mathcal{F}\colon \Mod_{C^\ast(BG;R)}\xrightarrow[]{}\prod_{H\in \mathcal{F}}\Mod_{C^\ast(BH;R)}
  \]
  is conservative. A standard argument shows that $\mathrm{Ind}_\mathcal{F}$ detects $\otimes_R$-nilpotent morphisms whose domain is compact. Then, by \cite{balmer2018surjectivity} (cf. \cite{BCHS}) we get that 
\[
\mathrm{Spc}(  \mathrm{Ind}_\mathcal{F})\colon\bigsqcup_{E\in \mathcal{F}} \Spc(\Mod_{C^\ast(BE;R)}^\omega) \to \Spc(\Mod_{C^\ast(BG;R)}^\omega)
\]
is surjective. Since the left hand side is Noetherian, then so is its image under $\mathrm{Spc}(  \mathrm{Ind}_\mathcal{F})$.
\end{proof}

\begin{example}
     The previous result applies for instance to $R=E$, the Lubin-Tate theory of height $n$ and a prime $p$, and any group with a finite model for $\underline{E}G$. 
\end{example}

 For the second consequence, recall that \emph{homological stratification} amounts to a bijection between the localizing ideals of a rigidly-compactly generated tensor-triangulated category $\T$ and the subsets of the homological spectrum $\mathrm{Spc}^h(\T^\omega)$.

\begin{corollary}\label{coro-homologicalstra}
         Let $G$ be a discrete group with a {stable} finite-dimensional model $X$ for $\underline{E}G$, and $R$ be a commutative ring spectrum. Assume that $R$ has a \textit{$G$-finite derived defect base} $\mathcal{F}$, and that $\Mod_{C^\ast(BH;R)}$ is homologically stratified for each $H\in \mathcal{F}$.  
         Then the category $\Mod_{C^\ast(BG;R)}$ is homologically stratified.
\end{corollary}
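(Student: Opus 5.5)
The plan is to deduce homological stratification of $\Mod_{C^\ast(BG;R)}$ from that of the targets by descent along the weakly descendable functor of \cref{thm-weakly}, using the descent results of \cite{barthel2024homological}. I would first apply \cref{thm-weakly} to the finite family $\mathcal{F}$, so that
\[
\mathrm{Ind}_\mathcal{F}\colon \Mod_{C^\ast(BG;R)}\to \prod_{H\in \mathcal{F}}\Mod_{C^\ast(BH;R)}
\]
is a weakly descendable geometric functor. Since $\mathcal{F}$ is finite up to conjugacy, the target is again a rigidly-compactly generated tt-category. Its homological spectrum is the disjoint union $\bigsqcup_{H\in\mathcal{F}}\Spc^h(\Mod_{C^\ast(BH;R)}^\omega)$, and its localizing ideals are products of localizing ideals of the factors.

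The second step is to check that the target is homologically stratified. Both the localizing ideals and the subsets of the homological spectrum split factorwise, and the homological support of an object $(M_H)_H$ is the disjoint union of the supports of the $M_H$. Hence the hypothesis that each $\Mod_{C^\ast(BH;R)}$ is homologically stratified gives stratification of the finite product.

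The third and key step is to invoke the descent theorem of \cite{barthel2024homological}. It says that if $f^\ast\colon\T\to\S$ is a weakly descendable geometric functor and $\S$ is homologically stratified, then $\T$ is homologically stratified. By \cite{barthel2024homological}, homological stratification is equivalent to the local-to-global principle together with minimality of $\mathrm{Locid}(E_\mathcal{B})$ for each homological prime $\mathcal{B}$, where $E_\mathcal{B}$ is the pure-injective associated to $\mathcal{B}$. Weak descendability transfers both properties. For the local-to-global principle, \cref{weakly des are conservative} gives $t\in\mathrm{Locid}\langle (f_i)_\ast f_i^\ast t\rangle$; the projection formula together with the local-to-global principle in each target then expresses $t$ in terms of its homological-support pieces. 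For minimality, one uses that the induced map on homological spectra is surjective, which holds because weakly descendable functors detect $\otimes$-nilpotence. One also uses that every homological residue field of $\T$ is obtained by restricting one from some factor.

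I expect the main obstacle to be confirming that the descent statement in \cite{barthel2024homological} applies exactly under weak descendability rather than under a stronger descendability hypothesis. If it requires only weak descendability, the corollary follows at once from \cref{thm-weakly}. Otherwise I would reprove the minimality transfer by hand. Given nonzero $X,Y\in\mathrm{Locid}(E_\mathcal{B})$, I would pick $i$ and a prime $\mathcal{B}'$ over $\mathcal{B}$ in the $i$-th factor with $f_i^\ast X$ and $f_i^\ast Y$ both nonzero at $\mathcal{B}'$. Stratification in the $i$-th factor then makes $f_i^\ast X$ and $f_i^\ast Y$ generate the same localizing ideal there. Pushing forward and using the projection formula, together with the fact that $\mathrm{Locid}(E_\mathcal{B})$ is minimal among ideals meeting $\mathcal{B}$, gives $X\in\mathrm{Locid}(Y)$.
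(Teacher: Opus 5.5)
Your proposal follows the paper's proof: the paper deduces the weak descendability of $\mathrm{Ind}_\mathcal{F}$ from \cref{thm-weakly} and then applies \cite[Theorem 7.23]{barthel2024homological} directly under that hypothesis, so your fallback argument for transferring minimality by hand is not needed. Your extra check that the finite product $\prod_{H\in\mathcal{F}}\Mod_{C^\ast(BH;R)}$ is homologically stratified is bookkeeping the paper leaves implicit when it invokes the citation.
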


\begin{proof}
   By  \cref{coro:chouinard in the field case} we know that the geometric functor $\mathrm{Ind}_\mathcal{F}$ is weakly descendable. The claim then follows from  \cite[Theorem 7.23]{barthel2024homological}. 
\end{proof}

\begin{remark}
One might hope to use \cref{prop-balmerspectrum-noe} to promote homological stratification, as in \cref{coro-homologicalstra}, to triangular stratification (see \cite{barthel2024homological} for details on this notion, which builds on the work of Stevenson \cite{Ste13}), or even to cohomological stratification. For instance, when $R=E$ is the Lubin--Tate theory of height $n$ at a prime $p$, cohomological stratification is already known for finite groups; see \cite[Section 7]{BCHNP}. In light of \cite{barthel2024homological} together with the previous results, triangular stratification is equivalent to the so-called Nerves of Steel conjecture. We do not pursue this direction here.
\end{remark}

\bibliographystyle{alpha}
\bibliography{bibfile} 

\end{document}